\newtheorem{thm}[subsection]{Theorem}
\newtheorem{lem}[subsection]{Lemma}
\newtheorem{cor} [subsection]{Corollary}
\theoremstyle{definition}
\newtheorem{dfn} [subsection]{Definition}
\theoremstyle{remark}
\newtheorem{obs} [subsection]{Remark}
\newtheorem{exm} [subsection]{Example}
\def\supp{\operatorname{supp}}
\def\deg{\operatorname{deg}}
\def\lcm{\operatorname{lcm}}
\def\reg{\operatorname{reg}}
\newcommand{\dbtilde}[1]{\accentset{\approx}{#1}}
\def\k {\mathrm{k}}
\def\mm {\mathfrak{m}}
\DeclareMathOperator{\NN}{\mathbb {N}}
\DeclareMathOperator{\QQ}{\mathbb {Q}}
\DeclareMathOperator{\pd}{pd}
\DeclareMathOperator{\Tor}{Tor}
\numberwithin{equation}{section}
\begin{document}

\title[Betti numbers of powers of path ideals of cycles]{Betti numbers of powers of path ideals of cycles}
\author[Silviu B\u al\u anescu, Mircea Cimpoea\c s, and Thanh Vu]{Silviu B\u al\u anescu$^1$, Mircea Cimpoea\c s$^2$ and Thanh Vu$^3$}  
\date{}

\keywords{Betti numbers, Monomial ideal, Path ideal}

\subjclass[2020]{13C15, 13P10, 13F20}

\footnotetext[1]{ \emph{Silviu B\u al\u anescu}, National University of Science and Technology Politehnica Bucharest, Faculty of
Applied Sciences, 
Bucharest, 060042, E-mail: silviu.balanescu@stud.fsa.upb.ro}
\footnotetext[2]{ \emph{Mircea Cimpoea\c s}, National University of Science and Technology Politehnica Bucharest, Faculty of
Applied Sciences, 
Bucharest, 060042, Romania and Simion Stoilow Institute of Mathematics, Research unit 5, P.O.Box 1-764,
Bucharest 014700, Romania, E-mail: mircea.cimpoeas@upb.ro,\;mircea.cimpoeas@imar.ro}
\footnotetext[3]{ \emph{Thanh Vu}, Institute of Mathematics, VAST, 18 Hoang Quoc Viet, Hanoi, Vietnam 122300, E-mail: vuqthanh@gmail.com}

\begin{abstract}  
Let $J_{n,m} = (x_1\cdots x_{m},x_2 \cdots x_{m+1},\ldots,x_{n}x_1\cdots x_{m-1})$ be the $m$-path ideal of a cycle of length $n \ge 5$ over a polynomial ring $S = \k[x_1,\ldots,x_n]$. Let $t\geq 1$ be an integer. We show that $J_{n,m}^t$ has a linear free resolution and give a precise formula for all of its Betti numbers when $m = n-1, n-2$.
\end{abstract}

\maketitle

\section{Introduction}

Let $S = \k[x_1,\ldots,x_n]$ be a standard graded polynomial ring over a field $\k$. For a finitely generated graded $S$-module $M$, the $(i,j)$-graded Betti number of $M$, denoted by $\beta_{i,j}(M)$ is defined by 
$$\beta_{i,j}(M) = \dim_\k \Tor_i^S(\k,M)_j.$$

The Betti numbers of a homogeneous ideal $I$, among the most important invariants of $I$, capture many geometric properties of the projective variety defined by $I$ (see \cite{E} for more information). Given $I$, computing all of its Betti numbers is always a challenging but interesting problem. We have very few classes of ideals for which we know all of their Betti numbers. When considering the powers of $I$, much less is known. A celebrated result of Akin, Buchsbaum, and Weyman \cite{ABW} describes the minimal free resolutions of powers of the maximal minors of generic matrices. Even in the case of edge ideals of graphs, except complete graphs or complete bipartite graphs, the Betti numbers of their powers are unknown. The coarser invariants, Castelnuovo-Mumford regularity, of powers of edge ideals of graphs are better known and still an active area of research \cite{HHZ, B, BHT, MV2, BN} (see \cite{MV1} for a recent survey on the topic). On the other hand, the projective dimension of powers of edge ideals of graphs is much more difficult to compute. The projective dimensions of powers of edge ideals of paths and cycles are only given very recently in \cite{BC2, MTV}, and some classes of trees in \cite{MTV, HHV}.

Constructing minimal free resolutions of monomial ideals and their powers have regained interest recently \cite{EFSS, CEFMMSS1, CEFMMSS2}, where the authors look at the free resolutions of monomial ideals from the perspective of cellular resolutions. In this work, we not only give formulae for the Betti numbers of powers of some path ideals of cycles but also give a construction of their minimal free resolutions via the mapping cone constructions.

Let us now recall the main object of study in this work. The \emph{cycle} of length $n$, where $n\ge 3$, is the (simple) graph $C_n$ on the vertex set 
$V(C_n)=\{1,2,\ldots,n\}$ and the edge set 
$$E(C_n)=\{ \{1,2\},\ldots, \{n-1,n\}, \{n,1\}\}.$$ 
For an integer $m$ with $2 \le m \le n$, the $m$-path ideal of $C_n$ is
$$J_{n,m}:=(x_1\cdots x_m,\ldots,x_{n-m+2}\cdots x_nx_1,\ldots,x_nx_1\cdots x_{m-1})\subset S.$$
In \cite{AF}, Alilooee and Faridi computed all the graded Betti numbers of path ideals of cycles and lines. The problem of calculating all the Betti numbers of the powers of general path ideals of cycles is very complicated, which can be seen partially in the work of \cite{BC2, MTV}. In this work, we study the first two non-trivial instances of the problem, namely the $n-1$-path ideals of $n$-cycles and $n-2$-path ideals of $n$-cycles. Our main results are as follows.

\begin{thm}\label{teo1}
For all $n\geq 2$ and $t\geq 1$, the ideal $J_{n,n-1}^t$ has a linear free resolution and 
$$ \beta_i (J_{n,n-1}^t ) = \binom{n-1}{i}\binom{n+t-i-1}{t-i},\text{ for all }i\geq 0.$$
\end{thm}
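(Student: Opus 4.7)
The plan is to exploit the explicit combinatorial structure of $J_{n,n-1}^t$. Writing $m = x_1 \cdots x_n$ and $u_i = m/x_i$ for the generators of $J_{n,n-1}$, the identity $u_{i_1} \cdots u_{i_t} = m^t/(x_{i_1} \cdots x_{i_t})$ shows that $J_{n,n-1}^t$ equals the ideal generated by all monomials $x^a$ where $a \in \NN^n$ satisfies $a_j \le t$ for every $j$ and $|a| = (n-1)t$. Given two such exponents $a, a'$ with $a_i > a'_i$, the equality $|a|=|a'|$ produces some $j$ with $a_j < a'_j \le t$, so the swap $a - e_i + e_j$ is again a valid exponent vector. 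This is the polymatroid exchange property, so $J_{n,n-1}^t$ is polymatroidal; in particular it admits a linear free resolution and has linear quotients, and the resolution can be built iteratively via the mapping cone construction associated to the linear quotient order.

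Order the minimal generators by the lexicographic order on exponents ($x^a > x^{a'}$ iff the first differing coordinate satisfies $a_i > a'_i$). For a generator $x^a$, let $\operatorname{set}(x^a)$ denote the set of variables lying in the colon ideal of $x^a$ against the ideal generated by all lex-larger generators. A direct exchange argument identifies
$$
\operatorname{set}(x^a) = \{x_k : a_k < t \text{ and } a_l > 0 \text{ for some } l > k\}.
$$
Indeed, if $a_k < t$ and $a_m > 0$ for some $m > k$, then $a - e_m + e_k$ is lex-larger than $a$ and corresponds to a generator dividing $x_k \cdot x^a$. Conversely, any lex-larger $a'$ with $a' \le a + e_k$ must equal $a - e_m + e_k$ for some $m > k$ by degree comparison and lex positivity. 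The Herzog--Takayama formula for ideals with linear quotients then yields
$$
\beta_i(J_{n,n-1}^t) = \sum_a \binom{|\operatorname{set}(x^a)|}{i},
$$
summed over the exponents of all minimal generators.

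To evaluate this sum, reparametrize each generator by the complementary vector $b = (t, \ldots, t) - a$, so that $b \in \NN^n$ with $|b| = t$, and $\operatorname{set}(x^a)$ translates to $S(b) = \{k : b_k > 0 \text{ and } b_l < t \text{ for some } l > k\}$. There is a bijection between pairs $(b, R)$ with $R \subseteq S(b)$ of size $i$ and pairs $(T, c)$ with $T \subseteq \{1, \ldots, n-1\}$ of size $i$ and $c \in \NN^n$ of degree $t-i$: map $(b, R) \mapsto (R, b - \mathbf{1}_R)$ with inverse $(T, c) \mapsto (c + \mathbf{1}_T, T)$. Well-definedness of the inverse uses that $T \subseteq \{1, \ldots, n-1\}$ combined with $|c| = t-i$ forces $c_n \le t-i < t$ whenever $i \ge 1$, so coordinate $n$ always witnesses the second condition in $S(c + \mathbf{1}_T)$; the case $i=0$ is vacuous. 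Counting both sides gives $\beta_i(J_{n,n-1}^t) = \binom{n-1}{i}\binom{n+t-i-1}{t-i}$, completing the proof. The main obstacle is the explicit identification of $\operatorname{set}(x^a)$ under the chosen lex order and the check of the boundary cases in the bijection; once these are in hand, the remaining steps are formal.
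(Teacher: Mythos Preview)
Your proof is correct and takes a genuinely different route from the paper. The paper proceeds by establishing the Betti splitting
\[
I_{n-1}^s I_n^t = I_{n-1}^s (f_1^t) + x_n I_{n-1}^{s+1} I_n^{t-1}
\]
(with $I_n = J_{n,n-1}$), deriving from it a recurrence for the Betti numbers $e(n,s,t,i)=\beta_i(I_{n-1}^sI_n^t)$, packaging these into a three-variable generating function $\Phi(x,y,z)$, solving for $\Phi$ in closed form, and finally extracting the coefficient of $x^{n-2}y^tz^i$.

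Your argument bypasses the recursion entirely by recognising that the generating set of $J_{n,n-1}^t$, rewritten via $b=(t,\ldots,t)-a$, is the base set of a discrete polymatroid (in fact, combinatorially the Veronese $\mathfrak m^t$). You then invoke the Herzog--Takayama description of the minimal free resolution for ideals with linear quotients, identify $\operatorname{set}(x^a)$ explicitly under the lex order, and finish with a clean bijection $(b,R)\leftrightarrow (T,c)$ that immediately yields $\binom{n-1}{i}\binom{n+t-i-1}{t-i}$. This is shorter and more conceptual for this particular ideal; one small point worth making explicit is that the colon ideal really is generated by variables (not just that the variables in it are as described), but this is covered by your appeal to the polymatroidal $\Rightarrow$ linear-quotients theorem. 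The paper's Betti-splitting machinery, by contrast, is the engine that drives the much harder $(n-2)$-path case treated later, where no such polymatroidal structure is available; so each approach buys something: yours gives a direct and elegant proof of Theorem~\ref{teo1} in isolation, while the paper's sets up the inductive framework needed for the rest of the article.
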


For the case of $n-2$-path ideals of $n$-cycles, we give recursive formulae for computing the Betti numbers of their powers (see 
Lemma \ref{lem_b_c} and Lemma \ref{lem_c_b}) and deduce the following:

\begin{thm}\label{teo2}
    For all $n \ge 3$ and all $t \ge 1$, the ideal $J_{n,n-2}^t$ has a linear free resolution and 
    $$\pd (J_{n,n-2}^t) = \begin{cases}
        \min \{ n-1, 2t\} & \text{ if } n \text { is odd}, \\
        \min \{ n-2, 2t\} & \text{ if }n  \text{ is even}.
    \end{cases}$$
\end{thm}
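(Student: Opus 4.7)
The plan is to derive Theorem~\ref{teo2} from the recursive formulae for Betti numbers given in Lemmas~\ref{lem_b_c} and~\ref{lem_c_b}, which relate the Betti numbers of $J_{n,n-2}^t$ to those of smaller related instances (with the naming suggesting a mutual recursion between two sequences).

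For the linearity of the resolution, I would proceed by induction on $t$. The base case $t = 1$ is covered by~\cite{AF}, where $J_{n,n-2}$ is shown to have a linear resolution. For the inductive step, the recursive formulae express $\beta_{i,j}(J_{n,n-2}^t)$ as a sum of Betti numbers of ideals whose resolutions are linear by the inductive hypothesis. Since each summand stays in the linear strand, $\beta_{i,j}(J_{n,n-2}^t) = 0$ whenever $j \ne i + t(n-2)$, which is the required linearity.

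With linearity in hand, $\pd(J_{n,n-2}^t) = \max\{i : \beta_i(J_{n,n-2}^t) > 0\}$, and we analyze the recursion. The upper bound $\pd \le 2t$ should emerge because each descent step reduces $t$ by $1$ while widening the homological support by at most $2$; iterating back to $t = 0$ yields the bound. The upper bound $\pd \le n-1$ for odd $n$ or $\pd \le n-2$ for even $n$ translates via Auslander--Buchsbaum into a lower bound on $\depth(S/J_{n,n-2}^t)$; for odd $n$ this is trivial, while for even $n$ one must produce a non-zerodivisor on $S/J_{n,n-2}^t$, which should come from the bipartition of $C_n$ into odd- and even-indexed variables.

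For the matching lower bound, I would unwind the recursion to exhibit an explicit nonzero Betti number at homological degree $\min\{2t, n-1\}$ or $\min\{2t, n-2\}$ as appropriate. The main obstacle is the parity dichotomy: stabilization at $n-1$ for odd $n$ versus $n-2$ for even $n$. This parity split must arise organically from the recursive formulae and mirrors the well-known dependence of $\depth(S/I(C_n))$ on the parity of $n$. The delicate part of the argument will be tracing this dichotomy through the recursion and verifying that the Betti number at the predicted top homological degree is strictly positive in every regime of $t$ and $n$, which should reduce to a careful bookkeeping of the leading terms in the mutual recursion.
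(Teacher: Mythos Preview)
Your plan misreads the structure of the recursion in Lemmas~\ref{lem_b_c} and~\ref{lem_c_b}. Those lemmas express $b(n,s,t,i)=\beta_i(J_n^sI_n^t)$ in terms of $c(n-1,\cdot,\cdot,\cdot)$, and $c(n,s,t,i)$ in terms of $b(n-1,\cdot,\cdot,\cdot)$: the parameter that drops is $n$, not $t$. Consequently an induction on $t$ alone cannot close, and the sentence ``each descent step reduces $t$ by $1$ while widening the homological support by at most $2$'' does not describe what actually happens. The paper composes the two lemmas (Lemma~\ref{lem_support_non_zero_coeffs}) to get a self-recurrence passing from $b(n,\cdot,\cdot,\cdot)$ to $b(n-2,\cdot,\cdot,\cdot)$, with a shift of at most $2$ in homological degree; the induction is on $n$, and one must carry along the auxiliary family $J_n^sI_n^t$ for all $s\ge 0$, not just $s=0$. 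The same issue affects your linearity argument: Lemma~\ref{lem_linear_free_resolutions_B_C} proves linearity for the whole two-parameter family $B_{n,s,t}$ and $C_{n,s,t}$ simultaneously, by induction on $n$ and then $t$, because the Betti splittings feeding the recursion are only available once linearity is known for the pieces at level $n-1$.

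For the projective dimension, the paper does not split the argument into separate upper bounds $\pd\le 2t$ and $\pd\le n-1$ (or $n-2$) and a matching lower bound. Instead, Corollary~\ref{cor_pd} gives $\pd(J_n^sI_n^t)=\max\{\min\{n-3,s+t\},\,q+2\}$ with $q$ a maximum of $\pd(J_{n-2}^uI_{n-2}^v)$ over a finite index set $\Gamma(s,t)$, and the whole formula---including the parity dichotomy---falls out of this recursion once one checks (Step~1 of Theorem~\ref{thm_pd_n-2}) that the extremal point $(s+t-2,1)\in\Gamma(s,t)$ has nonzero coefficient, so that $q=\pd(J_{n-2}^{s+t-2}I_{n-2})$. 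Your proposed route to the even-case bound via Auslander--Buchsbaum and an explicit non-zerodivisor built from the bipartition of $C_n$ is a genuinely different idea; it may well work, but note that $J_{n,n-2}$ is not the edge ideal of $C_n$ (each generator is a product of $n-2$ consecutive variables), so the bipartition of $C_n$ does not interact with the generators in the obvious way, and you would need to say concretely what the regular element is.
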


\begin{thm}\label{teo3}
Let $k\geq 1$ be an integer. 
\begin{enumerate}
\item[(1)] For all $i\geq 0$, $t\ge 0$ and $n = 2k+1$ we have:
$$ \beta_i(J_{n,n-2}^t) = \sum_{j=0}^{\left\lfloor \frac{i}{2} \right\rfloor} \binom{n}{i-2j}\binom{n+t-1-i+j}{n-1} 
- \sum_{j=0}^{\left\lfloor \frac{i-1}{2} \right\rfloor} \binom{n}{i-1-2j}\binom{t+k-1-j}{n-1}. $$
\item[(2)] For all $i\geq 0$, $t\ge 0$ and $n = 2k$ we have:
$$ \beta_i(J_{n,n-2}^t) = \sum_{j=0}^{\left\lfloor \frac{i}{2} \right\rfloor} \binom{n}{i-2j}\binom{n+t-1-i+j}{n-1} 
-  \sum_{j=0}^{\left\lfloor \frac{i}{2} \right\rfloor} \binom{n}{i-2j}\binom{t+k-1-j}{n-1}.$$
\end{enumerate}
\end{thm}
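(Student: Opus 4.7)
The plan is to establish both formulae by induction on $t$, using the recursive relations in Lemma \ref{lem_b_c} and Lemma \ref{lem_c_b} as the main engine. Those lemmas presumably relate $\beta_i(J_{n,n-2}^t)$ to the Betti numbers of a companion sequence (probably arising from a short exact sequence that removes one generator of $J_{n,n-2}^t$, so that the auxiliary ideal behaves like a ``path'' counterpart of the cycle), and vice versa. Because Theorem \ref{teo2} tells us a linear free resolution exists, each $\beta_i$ is determined by its total value, so I need only compare the two sides as polynomial expressions in $t$ for every fixed $n$ and $i$.

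The base case is $t=0$, where both sides reduce to $\beta_0(S)=1$ (the upper correction term vanishes by the convention $\binom{-1}{n-1}=0$ for $n\ge 2$), and, if needed for a strong induction, $t=1$, where the claimed expression must match the formula of Alilooee--Faridi \cite{AF} for $\beta_i(J_{n,n-2})$. For the inductive step, I would fix $n,i$ and write
\[
\beta_i(J_{n,n-2}^t) \;=\; \text{(contribution from Lemma \ref{lem_b_c})} \;-\; \text{(contribution from Lemma \ref{lem_c_b})},
\]
plug in the conjectured formula for every Betti number appearing on the right at parameter $t-1$ (and possibly the companion sequence), and verify that after reindexing one recovers the claimed expression at parameter $t$. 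The two parities of $n$ must be treated in parallel, since the correction term has different shape depending on whether $n=2k$ or $n=2k+1$; this parity enters the recursion through the fact that a cycle of even length admits a tiling by edges while an odd cycle does not, which is precisely what distinguishes the summation ranges $\lfloor i/2 \rfloor$ and $\lfloor (i-1)/2 \rfloor$ in the correction.

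The main obstacle will be the binomial bookkeeping: substituting the conjectured double sum into the recursion produces a triple sum, and one needs repeated application of Pascal's identity $\binom{a}{b}=\binom{a-1}{b}+\binom{a-1}{b-1}$ together with the hockey-stick/Chu--Vandermonde identity
\[
\sum_{j\ge 0}\binom{n+t-2-i+j}{n-1} = \binom{n+t-1-i}{n-1}+\binom{n+t-2-i}{n-2} \cdot (\text{shift})
\]
to collapse the extra sum. The two correction terms in the odd and even cases must be shown to telescope into a single shifted binomial so that everything aligns; boundary care is needed at the indices where $\lfloor i/2 \rfloor$ and $\lfloor (i-1)/2 \rfloor$ jump and where $i-2j$ or $t+k-1-j$ becomes negative, since these are precisely the places where the two parities of $n$ produce genuinely different identities. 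Once this identity work is complete, the induction closes and both parts of the theorem follow simultaneously.
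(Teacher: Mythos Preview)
Your plan has a structural gap: the recursions in Lemma \ref{lem_b_c} and Lemma \ref{lem_c_b} do not move in the $t$-direction at all, so an induction on $t$ with $n$ fixed cannot be driven by them. Concretely, Lemma \ref{lem_b_c} says
\[
b(n,s,t,i)=b(n-1,s+t,0,i)+\sum_{(a,b)\in\Lambda(s,t)}\tilde c(n-1,a,b,i),
\]
and Lemma \ref{lem_c_b} expresses $c(n,s,t,i)$ in terms of $b(n-1,\cdot,\cdot,\cdot)$. Both drop $n$ by one; neither reduces $t$. The companion object is not a path-type ideal but $C_{n,s,t}=J_n^{s}(x_1,x_n)^{t}$, and the intertwining of $b$ and $c$ is what forces the descent in $n$. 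The actual self-recurrences the paper extracts (Lemmas \ref{lem_self_rec_1} and \ref{lem_self_rec_2}) relate $b(n,s,t,i)$ to $b(n,s+1,t-1,i)$ \emph{plus} a collection of terms $b(n-2,u,v,i)$; so even the part of the recursion that trades $t$ for $t-1$ simultaneously changes $s$ and produces lower-$n$ terms.

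This has two consequences that your outline does not accommodate. First, the induction must be on the tuple $(n,s,t,i)$ (the paper uses the lexicographic order with $n>t>s>i$), and the base cases are $n=2,3$ and $t=0$, not just $t=0,1$. Second, you cannot restrict to $s=0$: the right-hand sides of the recurrences involve $b(n,s,t,i)$ and $b(n-2,u,v,i)$ for many values of the first slot, so the inductive hypothesis has to be a closed formula for $b(n,s,t,i)=\beta_i(J_n^{\,s}I_n^{\,t})$ with general $s$. The paper therefore proves the stronger identity $b(n,s,t,i)=p(n,s,t,i)$ of Lemma \ref{lem_main}, where $p$ carries an extra ``constant'' piece $p^{c}(n,s,t,i)=\binom{n-2}{i}\binom{n+s-i-2}{n-2}$ coming from $J_n^{s}$, and only at the very end specializes to $s=0$. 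Your binomial-bookkeeping paragraph is on the right track in spirit, but without the $s$-parameter and the descent in $n$ the identities you would need simply do not match up.
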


To prove the main results, we establish various Betti splittings involving powers of $n-1$ and $n-2$-path ideals of $n$-cycles. These Betti splittings give rise to recursive equations for the Betti numbers. In the case of the $n-1$-path ideal, we deduce a formula for the generating function of the Betti numbers. In the case of $n-2$-path ideal, to prove Theorem \ref{teo3}, we show that both sides satisfy the same set of recurrent relations and agree at the boundary terms. See Section \ref{sec_experiment} for more detail. To illustrate the effectiveness of the results, we give an example.

\begin{exm} By Theorem \ref{teo2} and Theorem \ref{teo3}, the Betti table of $J_{27,25}^4$ is as follows.

\begin{table}[h!]

    \[\begin{array}{l|c c c c  c c c c c}
            &   0   & 1     & 2     & 3 & 4 & 5 & 6 & 7 & 8\\ \hline
        -   &   -   &   -   & -     & - & - & -  & - & - & -\\
      100   &27405    &  98658 & 136332 & 89181& 27405   & 3654 & 378 & 27 & 1
    \end{array}
\]
\label{tab:table1}
\end{table}
\end{exm}
We now outline the organization of the paper. In Section \ref{sec_pre}, we recall the notion of Betti splittings and prove a key result to compute the intersection of monomial ideals. In Section \ref{sec_n_1}, we prove Theorem \ref{teo1} which computes all the Betti numbers of powers of $J_{n,n-1}$. In Section \ref{sec_n_2}, we derive recurrences for the Betti numbers of powers of $J_{n,n-2}$ and establish Theorem \ref{teo2}. In Section \ref{sec_experiment}, we prove Theorem \ref{teo3}.

\section{Preliminaries} \label{sec_pre}

Throughout this section, we let $S = \k[x_1,\ldots, x_n]$ be the polynomial ring over an arbitrary field $\k$, with the standard grading.

\subsection*{Projective dimension and regularity} 

Let $M$ be a finitely generated graded $S$-module and $i,j$ two integers with $i \ge 0$.
The $(i,j)$-graded Betti number of $M$ is defined by 
$$\beta_{i,j}(M) = \dim_\k \Tor_i^S (\k,M)_j.$$
The $i$-th Betti number of $M$ is 
$$\beta_i(M) = \dim_\k \Tor_i^S(\k,M) = \sum_j \beta_{i,j}(M).$$ 
The projective dimension of $M$, denoted by $\pd_S(M)$, and the Castfelnuovo-Mumford regularity of $M$, denoted by $\reg_S(M)$, are defined as follows:
\begin{align*}
    \pd_S (M) &= \sup \{i :\; \beta_i(M) \neq 0\},\\
    \reg_S(M) &= \sup \{ j -i :\; \beta_{i,j} (M) \neq 0\}.
\end{align*}

We have the following elementary facts:

\begin{lem}\label{lem_mul_x} 
Let $x_j$ be a variable and $I$ a nonzero homogeneous ideal of $S$. Then 
\begin{enumerate}
    \item $\beta_i(x_jI)=\beta_i(I)$, for all $i\ge 0$.
    \item $\pd_S(x_jI) = \pd_S(I)$.
\end{enumerate}    
\end{lem}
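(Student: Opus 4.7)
The plan is to reduce everything to the observation that $x_j I$ is just a degree shift of $I$. First I would define the map $\varphi : I \to x_j I$ by $\varphi(f) = x_j f$. Since $S = \k[x_1,\ldots,x_n]$ is an integral domain and $x_j \neq 0$, multiplication by $x_j$ is injective on $S$, hence on $I$; and $\varphi$ is surjective by the definition of $x_j I$. So $\varphi$ is an $S$-module isomorphism. Keeping track of grading, $\varphi$ sends the degree-$d$ piece of $I$ to the degree-$(d+1)$ piece of $x_j I$, so it yields a graded isomorphism $I(-1) \xrightarrow{\sim} x_j I$.

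Next I would transport the minimal graded free resolution. Given a minimal graded free resolution $F_\bullet \to I \to 0$, shifting every module by $(-1)$ produces a minimal graded free resolution $F_\bullet(-1) \to I(-1) \to 0$, which is a minimal graded free resolution of $x_j I$ via the isomorphism above. This gives $\beta_{i,k}(x_j I) = \beta_{i,k-1}(I)$ for all $i,k$, and summing over $k$ yields part (1). Part (2) is immediate, since the length of the shifted resolution equals the length of the original one, and both are minimal.

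There is no real obstacle here: the statement is essentially the functoriality of graded Betti numbers under degree shifts, together with the fact that $S$ is a domain so multiplication by a nonzero variable is injective. The only thing to be careful about is distinguishing internal grading from homological degree when invoking the shift.
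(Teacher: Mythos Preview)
Your argument is correct and is the standard one: the graded isomorphism $I(-1)\xrightarrow{\sim} x_jI$ given by multiplication by $x_j$ (injective since $S$ is a domain) transports a minimal free resolution of $I$ to one of $x_jI$ with a uniform degree shift, so the total Betti numbers and the projective dimension coincide. The paper does not actually supply a proof of this lemma; it is stated as an elementary fact and left without proof, so there is nothing further to compare.
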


\subsection*{Path ideals of graphs}
Conca and De Negri \cite{CD} introduced the notion of $t$-path ideals of graphs as a generalization of the notion of edge ideals of graphs. Let us now recall this definition. Let $G$ denote a finite simple graph over the vertex set $V(G) = \{1,\ldots,n\}$ and the edge set $E(G)$. Let $t \ge 2$ be a natural number. A $t$-path of $G$ is a sequence of distinct vertices $i_1, \ldots, i_t$ of $G$ such that $\{i_1,i_2\}$,$\ldots, \{i_{t-1},i_t\}$ are edges of $G$. The $t$-path ideal of $G$ is defined to be
$$I_t(G)= \left ( x_{i_1} \cdots x_{i_t} \mid i_1, \ldots,i_t \text{ is a } t\text{-path of } G \right ) \subseteq S.$$

In contrast to the edge ideals of graphs, not much is known about the homological invariants of powers of $t$-path ideals of graphs when $t > 2$. In \cite{BC1, SL, SWL}, the authors gave formulae for the depth and Stanley depth, regularity, and multiplicity of powers of path ideals of path graphs, respectively. In \cite{BC2}, the authors obtained partial results for the depth and Stanley depth of powers of edge ideals of cycles.

\subsection*{Betti splittings} 

Betti splittings of monomial ideals were first introduced by Francisco, Ha, and Van Tuyl in \cite{FHV}, motivated by the work of Eliahou and Kervaire \cite{EK}. This notion has regained interest recently in several works \cite{CF, HV}. We recall the definition and the following results about Betti splittings, following \cite{NV2}.

\begin{dfn}
Let $P,I,J$ be proper nonzero homogeneous ideals of $S$ with $P = I + J$. 
The decomposition $P = I +J$ is called a \emph{Betti splitting} if for all $i \ge 0$ we have $$\beta_i(P) = \beta_i(I) + \beta_i(J) + \beta_{i-1}(I \cap J).$$
\end{dfn}

\begin{lem}\label{lem_pd_reg_split} Assume that $P = I + J$ is a Betti splitting of ideals of $S$. Then 
\begin{align*}
    \pd_S(P) &= \max \{ \pd_S(I), \pd_S(J), \pd_S(I \cap J) + 1\},\\
    \reg_S(P) &= \max\{ \reg_S(I), \reg_S(J), \reg_S(I \cap J) - 1\}.    
\end{align*}
\end{lem}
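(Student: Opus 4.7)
The plan is to derive both formulas directly from the splitting identity, upgrading it to a graded identity only where needed.

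For $\pd_S(P)$, the ungraded splitting is enough. Since $\beta_i(P) = \beta_i(I) + \beta_i(J) + \beta_{i-1}(I \cap J)$ expresses $\beta_i(P)$ as a sum of nonnegative integers, $\beta_i(P)\neq 0$ if and only if at least one of the three summands is nonzero. Hence
\[
  \pd_S(P) = \sup\{i : \beta_i(P) \neq 0\} = \max\bigl\{\sup\{i : \beta_i(I)\neq 0\},\; \sup\{i : \beta_i(J)\neq 0\},\; \sup\{i : \beta_{i-1}(I\cap J)\neq 0\}\bigr\},
\]
and the three suprema on the right equal $\pd_S(I)$, $\pd_S(J)$, and $\pd_S(I\cap J)+1$ respectively.

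For $\reg_S(P)$ a graded refinement is required, namely the identity $\beta_{i,j}(P) = \beta_{i,j}(I) + \beta_{i,j}(J) + \beta_{i-1,j}(I\cap J)$ for every bidegree $(i,j)$. To produce this, I would invoke the Mayer--Vietoris short exact sequence
\[
  0 \longrightarrow I\cap J \longrightarrow I \oplus J \longrightarrow P \longrightarrow 0,
\]
tensor with $\k$, and read off from the graded long exact sequence of $\Tor^S(\k,-)$ the pointwise inequality $\beta_{i,j}(P) \le \beta_{i,j}(I) + \beta_{i,j}(J) + \beta_{i-1,j}(I\cap J)$. Summing over $j$ recovers the total inequality, so the hypothesis that equality holds for every $i$ forces equality in each graded piece as well.

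With the graded identity in hand, $\beta_{i,j}(P)\neq 0$ if and only if one of the three nonnegative summands is nonzero, and splitting
\[
  \reg_S(P) = \sup\{j-i : \beta_{i,j}(P) \neq 0\}
\]
across the three contributions yields $\reg_S(I)$, $\reg_S(J)$, and, after reindexing $i' = i-1$ so that $j-i = (j - i') - 1$, the quantity $\reg_S(I\cap J) - 1$. The whole argument is essentially bookkeeping; the main point of care is this index shift, which explains the $+1$ in the $\pd$ formula and the $-1$ in the $\reg$ formula for the $I\cap J$ term.
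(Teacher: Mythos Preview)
Your proof is correct. The paper itself does not supply an argument: it simply cites \cite[Corollary 2.2]{FHV} and \cite[Lemma 3.7]{NV2}. Your route is therefore more informative, and in fact handles a point the paper's setup leaves implicit. The paper defines a Betti splitting via the \emph{total} Betti numbers $\beta_i$, whereas the regularity formula genuinely needs the graded identity $\beta_{i,j}(P)=\beta_{i,j}(I)+\beta_{i,j}(J)+\beta_{i-1,j}(I\cap J)$. Your derivation of the graded equality---Mayer--Vietoris gives the graded inequality, summing over $j$ recovers the total inequality, and the hypothesized total equality then forces equality degree by degree---is precisely what the cited references do (Francisco--Ha--Van Tuyl take the graded identity as the definition), so you have effectively reconstructed the missing bridge. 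The index bookkeeping for the $\pm 1$ shifts is correct.
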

\begin{proof}
    See \cite[Corollary 2.2]{FHV} or \cite[Lemma 3.7]{NV2}.
\end{proof}

\begin{dfn} Let $\varphi: M \to N$ be a morphism of finitely generated graded $S$-modules. We say that $\varphi$ is $\Tor$-vanishing if for all $i \ge 0$, we have $\Tor_{i}^S(\k,\varphi) = 0$.    
\end{dfn}

We have the following criterion for Betti splitting by Nguyen and Vu \cite[Lemma 3.5]{NV2}.

\begin{lem}\label{lem_splitting_criterion_1} Let $I,J$ be nonzero homogeneous ideals of $S$ and $P = I+J$. The decomposition $P = I +J$ is a Betti splitting if and only if the inclusion maps $I \cap J \to I$ and $I\cap J \to J$ are $\Tor$-vanishing.    
\end{lem}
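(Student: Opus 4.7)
My plan is to derive the criterion from the long exact sequence of $\Tor$ associated to the Mayer--Vietoris short exact sequence
$$0 \longrightarrow I\cap J \xrightarrow{\;\;\iota\;\;} I \oplus J \xrightarrow{\;\;\sigma\;\;} P \longrightarrow 0,$$
where $\iota(x) = (x,-x)$ and $\sigma(a,b) = a+b$. Applying $\Tor_\bullet^S(\k,-)$ yields a long exact sequence
$$\cdots \to \Tor_i^S(\k,I\cap J) \xrightarrow{\Tor_i(\k,\iota)} \Tor_i^S(\k,I) \oplus \Tor_i^S(\k,J) \to \Tor_i^S(\k,P) \xrightarrow{\partial_i} \Tor_{i-1}^S(\k,I\cap J) \to \cdots$$
for all $i \ge 0$.

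The first step is to compare dimensions. From exactness, we always have the inequality
$$\beta_i(P) \le \beta_i(I) + \beta_i(J) + \beta_{i-1}(I\cap J),$$
with equality (for all $i$) if and only if every map $\Tor_i(\k,\iota)$ is zero, equivalently every connecting map $\partial_i$ is surjective and the long exact sequence breaks up into short exact sequences
$$0 \to \Tor_i^S(\k,I)\oplus \Tor_i^S(\k,J) \to \Tor_i^S(\k,P) \to \Tor_{i-1}^S(\k,I\cap J) \to 0.$$
This will show that $P = I+J$ is a Betti splitting if and only if $\Tor_i(\k,\iota) = 0$ for all $i \ge 0$.

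Finally, since $\iota$ factors through the two inclusions $\alpha: I\cap J \hookrightarrow I$ and $\beta: I\cap J \hookrightarrow J$ as $\iota = (\alpha,-\beta)$, and since $\Tor$ is additive with respect to direct sums of target modules, the map $\Tor_i(\k,\iota)$ lands in $\Tor_i(\k,I)\oplus \Tor_i(\k,J)$ componentwise as $(\Tor_i(\k,\alpha),\,-\Tor_i(\k,\beta))$. Hence $\Tor_i(\k,\iota)=0$ for all $i$ if and only if both $\Tor_i(\k,\alpha) = 0$ and $\Tor_i(\k,\beta) = 0$ for all $i$, i.e.\ if and only if $\alpha$ and $\beta$ are both $\Tor$-vanishing. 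Combining this with the equivalence of the previous paragraph yields the lemma.

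The only genuinely delicate point is the first equivalence: turning the numerical identity $\beta_i(P) = \beta_i(I)+\beta_i(J)+\beta_{i-1}(I\cap J)$ into the vanishing of all the maps $\Tor_i(\k,\iota)$. I would handle this by a straightforward rank count on the long exact sequence: if any $\Tor_i(\k,\iota)$ is nonzero, the corresponding image and cokernel contributions strictly decrease the alternating rank count, so the Betti identity fails; conversely, if all these maps vanish, the long exact sequence visibly breaks into short exact sequences and the identity holds. This is the main technical step, but it is essentially bookkeeping on exact sequences of finite-dimensional $\k$-vector spaces.
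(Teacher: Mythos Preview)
Your argument is correct and is the standard proof of this criterion. The paper itself does not supply a proof: it simply cites \cite[Lemma 3.5]{NV2}, so there is no in-paper argument to compare against. Your route via the Mayer--Vietoris short exact sequence and the long exact sequence in $\Tor$ is exactly the argument one finds in that reference, including the rank-count observation that the Betti identities for all $i$ force each $\Tor_i(\k,\iota)$ to vanish (since equality at level $i$ gives $\operatorname{rank}\Tor_i(\k,\iota)+\operatorname{rank}\Tor_{i-1}(\k,\iota)=0$), and the componentwise identification $\Tor_i(\k,\iota)=(\Tor_i(\k,\alpha),-\Tor_i(\k,\beta))$.
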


In particular, we have

\begin{lem}\label{lem_splitting_criterion_2} Let $I, J$ be homogeneous ideals of $S$ and $P = I + J$. Assume that $I$ and $J$ have a linear free resolution and $I \cap J \subseteq \mm I$ and $I \cap J \subseteq \mm J$, where $\mm$ is the maximal homogeneous ideal of $S$. Then, the decomposition $P = I + J$ is a Betti splitting.    
\end{lem}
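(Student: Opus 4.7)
The plan is to apply Lemma \ref{lem_splitting_criterion_1}, which reduces the claim to showing that the inclusion maps $\iota_I\colon I\cap J\hookrightarrow I$ and $\iota_J\colon I\cap J\hookrightarrow J$ are $\Tor$-vanishing. By symmetry, it suffices to treat $\iota_I$; the argument for $\iota_J$ is identical after interchanging the roles of $I$ and $J$.

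Let $d$ be the generating degree of $I$. Since $I$ has a linear free resolution, its minimal generators all lie in a single degree $d$, and linearity means $\Tor_i^S(\k,I)_j=0$ unless $j=d+i$. In particular, for each $i$, the graded module $\Tor_i^S(\k,I)$ is concentrated in the single degree $d+i$. On the other hand, the hypothesis $I\cap J\subseteq \mm I$ forces every homogeneous element of $I\cap J$, and in particular every minimal generator, to have degree at least $d+1$, because $\mm I$ contains no nonzero element of degree $\leq d$.

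The next step is the standard observation that if a finitely generated graded $S$-module $M$ is generated in degrees $\geq e$, then $\Tor_i^S(\k,M)_j=0$ for all $j<e+i$. This is verified by induction on $i$ using a minimal graded free resolution: each differential has entries in $\mm$, so the generators of the $i$-th syzygy module sit in degrees at least one higher than those of the $(i-1)$-st. Applied to $M=I\cap J$ with $e=d+1$, this shows that $\Tor_i^S(\k,I\cap J)$ is concentrated in degrees $\geq d+i+1$ for every $i$.

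Finally, the induced map $\Tor_i^S(\k,\iota_I)\colon \Tor_i^S(\k,I\cap J)\to \Tor_i^S(\k,I)$ is degree-preserving, so it vanishes once its source is supported in degrees $\geq d+i+1$ while its target is supported only in degree $d+i$: in each individual degree, one of the two modules is zero. Hence $\iota_I$ is $\Tor$-vanishing, and by the symmetric argument so is $\iota_J$, completing the proof via Lemma \ref{lem_splitting_criterion_1}. There is no real obstacle beyond isolating the correct ``$\Tor$ lives in a strip'' lemma; the rest is a one-line comparison of degree ranges.
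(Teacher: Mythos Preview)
Your proof is correct and follows essentially the same approach as the paper: both arguments use Lemma~\ref{lem_splitting_criterion_1} and establish $\Tor$-vanishing by observing that $\Tor_i^S(\k,I)$ is concentrated in degree $d+i$ while $\Tor_i^S(\k,I\cap J)$ lives only in degrees $\geq d+i+1$, so the degree-preserving induced map is forced to be zero. Your version is more explicit about why $I\cap J$ being generated in degrees $\geq d+1$ pushes the $\Tor$ groups into the required range, whereas the paper states $\Tor_i^S(\k,I\cap J)_{i+d}=0$ without elaboration, but the underlying idea is identical.
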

\begin{proof}
Let $d$ be the degree of minimal generators of $I$. By the assumption, we have $\Tor_i^S(k,I \cap J)_{i+d} = 0$ for all $i$. Since $I$ has a linear free resolution, the inclusion map $I\cap J \to I$ is $\Tor$-vanishing. Similarly, the inclusion map $I \cap J \to J$ is $\Tor$-vanishing. The conclusion follows from Lemma \ref{lem_splitting_criterion_1}.
\end{proof}

\begin{obs}
    By \cite[Proposition 2.1]{FHV} and \cite[Lemma 4.4]{NV1}, once we have $P = I + J$ is a Betti splitting, then the mapping cone construction for the map $I \cap J \to I \oplus J$ yields a minimal free resolution of $P$.
\end{obs}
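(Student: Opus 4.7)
The plan is to deduce the conclusion directly from the mapping cone construction applied to the short exact sequence
\[
0 \longrightarrow I\cap J \xrightarrow{(\iota_I,-\iota_J)} I\oplus J \xrightarrow{(a,b)\mapsto a+b} P \longrightarrow 0,
\]
using Lemma \ref{lem_splitting_criterion_1} to convert the Betti splitting hypothesis into the $\Tor$-vanishing of the two inclusion maps $\iota_I \colon I\cap J \hookrightarrow I$ and $\iota_J \colon I\cap J \hookrightarrow J$. First I would fix minimal graded free resolutions $F_\bullet \to I\cap J$, $G_\bullet \to I$, and $H_\bullet \to J$; then $G_\bullet \oplus H_\bullet$ is a minimal graded free resolution of $I\oplus J$, and any lift of $(\iota_I,-\iota_J)$ to a chain map $\varphi_\bullet \colon F_\bullet \to G_\bullet \oplus H_\bullet$ yields, via the standard mapping cone construction, a (not necessarily minimal) graded free resolution $\operatorname{Cone}(\varphi)_\bullet$ of $P$ whose $i$th term has rank $\beta_i(I) + \beta_i(J) + \beta_{i-1}(I\cap J)$.

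The second step is the minimality check. Write $\varphi_\bullet = (\varphi^I_\bullet,\varphi^J_\bullet)$ with $\varphi^I_\bullet \colon F_\bullet \to G_\bullet$ and $\varphi^J_\bullet \colon F_\bullet \to H_\bullet$ lifting $\iota_I$ and $\iota_J$ respectively. Since $F_\bullet$, $G_\bullet$, $H_\bullet$ are minimal, applying $-\otimes_S \k$ gives $\Tor^S_i(\k,\iota_I) = \varphi^I_i \otimes_S \k$ and similarly for $J$. By Lemma \ref{lem_splitting_criterion_1}, the Betti splitting hypothesis forces both $\Tor^S_i(\k,\iota_I) = 0$ and $\Tor^S_i(\k,\iota_J) = 0$ for every $i\ge 0$, hence every matrix entry of $\varphi^I_i$ and of $\varphi^J_i$ lies in the maximal homogeneous ideal $\mm$. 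Combined with the minimality of the differentials of $F_\bullet$ and of $G_\bullet \oplus H_\bullet$, the block form of the differential of $\operatorname{Cone}(\varphi)_\bullet$ then has all entries in $\mm$, i.e.\ the resolution is minimal.

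Finally I would confirm consistency with the Betti splitting identity: the minimality from the previous step gives $\beta_i(P) = \operatorname{rank}\operatorname{Cone}(\varphi)_i = \beta_i(I) + \beta_i(J) + \beta_{i-1}(I\cap J)$, which is precisely the defining equation of a Betti splitting, so no rank collapses and nothing needs to be cancelled further. The main obstacle is really only a bookkeeping one, namely making the equivalence ``$\Tor$-vanishing of the lifted chain map $\Leftrightarrow$ entries of the lift lie in $\mm$'' precise in the graded setting, but this is the standard observation that for a morphism between minimal free resolutions the induced map on $\Tor$ with $\k$ coefficients is exactly the matrix of the lift reduced modulo $\mm$. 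Once this is in place, the minimality of the mapping cone and the equality $\sum \operatorname{rank} = \beta_i(P)$ combine to give the claim, matching the references \cite[Proposition 2.1]{FHV} and \cite[Lemma 4.4]{NV1}.
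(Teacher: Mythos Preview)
Your argument is correct and is essentially the content of the cited references: the paper itself gives no proof beyond the two citations, and what you wrote is exactly the unpacking of \cite[Lemma 4.4]{NV1} combined with the Betti splitting $\Leftrightarrow$ $\Tor$-vanishing equivalence of Lemma \ref{lem_splitting_criterion_1} (which is \cite[Proposition 2.1]{FHV} in one direction). There is nothing to add or change.
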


\subsection*{An intersection of monomial ideals} In this subsection, we give a simple but useful lemma for computing the intersection of monomial ideals of certain forms. First, we define the support of a monomial ideal.

For a monomial $u\in S$, the \emph{support} of $u$, denoted by $\supp(u)$ is the set of variable $x_i$ such that $x_i|u$. Let $J\subset S$ be a monomial ideal of $S$ with the minimal monomial generating set $G(J)=\{u_1,\ldots,u_m\}$. The \emph{support} of $J$ is defined by 
$$\supp(J)=\bigcup_{i=1}^m \supp(u_i).$$

We have
\begin{lem}\label{lem_intersection} 
Assume that $J \subseteq K$ are monomial ideals of $S$ such that $x_n \notin \supp(J)\cap \supp(K)$ and let $I = J + x_nK$. Then for any 
$s\ge 0$ and $t\ge 1$, we have  
$$(x_nK)^s J^t \cap (x_nK)^{s+1} I^{t-1} = x_n (x_nK)^s J^t,\text{ where }I^0=J^0=S.$$
\end{lem}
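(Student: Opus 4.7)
The plan is to prove the two inclusions separately. For the direction $x_n(x_n K)^s J^t \subseteq (x_n K)^s J^t \cap (x_n K)^{s+1} I^{t-1}$, observe that $(x_n K)^s J^t$ is closed under multiplication by $x_n$, and that $J \subseteq K \subseteq I$ gives $J^t \subseteq K I^{t-1}$; hence
$$x_n(x_n K)^s J^t \subseteq (x_n K)^s \cdot (x_n K) \cdot I^{t-1} = (x_n K)^{s+1} I^{t-1}.$$

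The reverse containment is the substantive part. First expand
$$I^{t-1} = (J + x_n K)^{t-1} = \sum_{k=0}^{t-1}(x_n K)^k J^{t-1-k}$$
as a sum of ideals, so that
$$(x_n K)^{s+1} I^{t-1} = \sum_{k=0}^{t-1}(x_n K)^{s+1+k} J^{t-1-k}.$$
Since intersection of monomial ideals distributes over sums, the left-hand side of the lemma equals
$$\sum_{k=0}^{t-1}\Bigl[(x_n K)^s J^t \cap (x_n K)^{s+1+k} J^{t-1-k}\Bigr],$$
and it suffices to show each summand lies in $x_n(x_n K)^s J^t$.

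Fix $k \in \{0,1,\ldots,t-1\}$. The support hypothesis, read as placing $x_n$ outside both $\supp(K)$ and $\supp(J)$, ensures that $K^s J^t$ and $K^{s+1+k} J^{t-1-k}$ are ideals of $R := \k[x_1,\ldots,x_{n-1}]$, and the powers factor cleanly as $(x_n K)^s J^t = x_n^s K^s J^t$ and $(x_n K)^{s+1+k} J^{t-1-k} = x_n^{s+1+k} K^{s+1+k} J^{t-1-k}$. Decompose any monomial $w \in S$ as $w = x_n^d w'$ with $x_n \nmid w'$. A direct check shows $w \in x_n^s K^s J^t$ iff $d \ge s$ and $w' \in K^s J^t$, and $w \in x_n^{s+1+k} K^{s+1+k} J^{t-1-k}$ iff $d \ge s+1+k$ and $w' \in K^{s+1+k} J^{t-1-k}$. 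Membership in the intersection then forces $d \ge s+1+k$ and $w' \in K^s J^t \cap K^{s+1+k} J^{t-1-k}$; but $J \subseteq K$ yields $K^s J^t = K^s J^{t-1-k} J^{1+k} \subseteq K^{s+1+k} J^{t-1-k}$ inside $R$, so this intersection equals $K^s J^t$. Combining with $d \ge s+1+k \ge s+1$ gives $w \in x_n^{s+1} K^s J^t = x_n(x_n K)^s J^t$, as desired.

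The main obstacle is getting the support hypothesis to do its work: the factorization $(x_n K)^s J^t = x_n^s K^s J^t$ and the $x_n$-degree decomposition $w = x_n^d w'$ interact cleanly with the ideal structure only when $x_n$ does not occur in any generator of $K$ or $J$, and an $x_n$-carrying generator of $K$ couples the $x_n$-prefix to the rest and breaks the identity. Once this hypothesis is in force, the remainder of the proof is a direct comparison of exponent vectors; the inclusion $J \subseteq K$ is used precisely once, to collapse the intersection of two ideals in $R$ to the smaller one.
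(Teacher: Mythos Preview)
Your argument is essentially correct, with one slip: the chain $J \subseteq K \subseteq I$ is not valid, since $K \subseteq I = J + x_nK$ fails in general (take $J = (x_1^2)$, $K = (x_1)$). Fortunately you only need $J^t \subseteq K I^{t-1}$, and that follows from $J \subseteq K$ together with $J \subseteq I$, both of which do hold; so the easy inclusion goes through after this small repair.

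Your route to the hard inclusion differs from the paper's. You expand $I^{t-1} = \sum_k (x_nK)^k J^{t-1-k}$, invoke distributivity of intersection over sums of monomial ideals, and then for each summand separate the $x_n$-power from the $R = \k[x_1,\ldots,x_{n-1}]$ part. The paper instead works directly with the intersection: a minimal monomial generator $f$ of $(x_nK)^sJ^t \cap (x_nK)^{s+1}I^{t-1}$ is an $\lcm(g,h)$ with $g \in G((x_nK)^sJ^t)$ and $h \in G((x_nK)^{s+1}I^{t-1})$; the support hypothesis forces $\deg_{x_n}(g)=s$ and $\deg_{x_n}(h)\ge s+1$, hence $x_ng \mid f$, done. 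Both arguments rest on the same $x_n$-degree separation, but the paper's $\lcm$ viewpoint avoids the binomial expansion and the case analysis entirely. In fact your own expansion is more than you need: once you know every monomial of $(x_nK)^{s+1}I^{t-1}$ has $x_n$-degree at least $s+1$, any monomial in the intersection already lies in $x_n(x_nK)^sJ^t$ without splitting into summands. Your observation that $K^sJ^t \subseteq K^{s+1+k}J^{t-1-k}$, while true, is never used: you only need $w' \in K^sJ^t$, which comes for free from the first ideal.
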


\begin{proof} 
Let $f$ be a minimal monomial generator of $(x_nK)^s J^t \cap (x_nK)^{s+1} I^{t-1}$. Then there exist $g\in G((x_nK)^s J^t)$ and $h\in G((x_nK)^{s+1} I^{t-1})$ 
such that $f = \lcm(g,h)$. Since $x_n \notin \supp(J)\cap \supp(K)$, we deduce that $\deg_{x_n}(g) = s$ and $\deg_{x_n}(h) \geq s+1$. Therefore $x_ng\mid f$, that is $f \in x_n(x_nK)^s J^t$. 

Conversely, we have
$$x_n (x_nK)^s J^t = (x_nJ) (x_nK)^s J^{t-1} \subseteq (x_nK)^{s+1} J^{t-1} \subseteq (x_nK)^{s+1} I^{t-1}.$$
The conclusion follows.
\end{proof}

\section{Betti numbers of powers of $(n-1)$-path ideals of $n$-cycles}\label{sec_n_1}

In this section, we compute all the Betti numbers of powers of $(n-1)$-path ideals of $n$-cycles. 
We denote $f_1 = x_1 \cdots x_{n-1}, \ldots, f_n = x_nx_1 \cdots x_{n-2}$. If $n\geq 3$, the ideal 
$$I_n =J_{n,n-1}=(f_1,\ldots,f_n)$$ is
the $(n-1)$-path ideal of a cycle of length $n$. Also, we let $I_1=K[x_1]$ and $I_2=(x_1,x_2)$. 

Assume that $n\ge 3$. Note that 
$$I_n = f_1 + x_n I_{n-1}\text{ and }f_1 \in I_{n-1}.$$
By applying Lemma \ref{lem_intersection} to $(f_1)\subset I_{n-1}$, we get the following result:

\begin{lem}\label{lemi}
For all $s\ge 0$ and $t\ge 1$ we have
$$ I_{n-1}^s(f_1^t) \cap x_nI_{n-1}^{s+1} I_n^{t-1} = x_n I_{n-1}^s(f_1^t).$$
\end{lem}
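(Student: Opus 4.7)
The plan is to reduce Lemma \ref{lemi} to a direct application of Lemma \ref{lem_intersection} together with a trivial ``cancel a common monomial factor'' manipulation. Set $J = (f_1)$ and $K = I_{n-1}$. Since $f_1 = x_1\cdots x_{n-1}$ is a minimal generator of $I_{n-1}$ (for $n \geq 3$), we have $J \subseteq K$; and because both $f_1$ and every generator of $I_{n-1}$ involve only the variables $x_1,\ldots,x_{n-1}$, neither support contains $x_n$. With these choices the ideal $J + x_n K$ in Lemma \ref{lem_intersection} is exactly $(f_1) + x_n I_{n-1} = I_n$, as noted already in the paragraph preceding the lemma.

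Lemma \ref{lem_intersection} then supplies, for every $s \geq 0$ and $t \geq 1$,
$$(x_n I_{n-1})^s (f_1)^t \;\cap\; (x_n I_{n-1})^{s+1} I_n^{t-1} \;=\; x_n (x_n I_{n-1})^s (f_1)^t.$$
Because $x_n$ is a variable, $(x_n I_{n-1})^k = x_n^k I_{n-1}^k$, so the identity above can be rewritten as
$$x_n^s\bigl(I_{n-1}^s (f_1^t)\bigr) \;\cap\; x_n^s\bigl(x_n I_{n-1}^{s+1} I_n^{t-1}\bigr) \;=\; x_n^s\bigl(x_n I_{n-1}^s (f_1^t)\bigr).$$
For monomial ideals $U,V$ and any monomial $m$, one has the elementary identity $(mU)\cap (mV) = m(U\cap V)$, which is seen directly from the fact that the minimal generators of the intersection of two monomial ideals are the lcms of pairs of generators, so the common factor $m$ can be pulled out. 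Using this with $m = x_n^s$ on the left-hand side and cancelling $x_n^s$ (legal because $x_n$ is a nonzerodivisor on $S$) yields exactly the claimed equality.

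If one prefers a self-contained argument without citing Lemma \ref{lem_intersection}, one can reproduce its generator-wise proof verbatim in this setting. The inclusion $\supseteq$ follows from $(f_1)^t \subseteq I_{n-1}\cdot I_n^{t-1}$, which in turn follows from $f_1 \in I_{n-1}\cap I_n$. For $\subseteq$, one takes a minimal monomial generator $f = \lcm(g,h)$ with $g \in G(I_{n-1}^s(f_1^t))$ and $h \in G(x_n I_{n-1}^{s+1} I_n^{t-1})$ and observes that $\deg_{x_n}(g)=0$ (the support of $I_{n-1}$ and of $f_1$ avoids $x_n$) whereas $\deg_{x_n}(h) \geq 1$; hence $x_n g \mid f$, which places $f$ in $x_n I_{n-1}^s(f_1^t)$. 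No real obstacle arises — the entire content is the $x_n$-degree bookkeeping already present in Lemma \ref{lem_intersection}, and the mild subtlety is only the pulling of $x_n^s$ through the intersection when one chooses to invoke that lemma directly.
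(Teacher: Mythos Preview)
Your proposal is correct and follows exactly the paper's approach: the paper states Lemma~\ref{lemi} as an immediate consequence of Lemma~\ref{lem_intersection} applied to $J=(f_1)\subseteq K=I_{n-1}$, and you carry out precisely that application. You even make explicit the one step the paper leaves tacit, namely factoring out and cancelling the common $x_n^s$ from both sides of the identity produced by Lemma~\ref{lem_intersection}; your alternative self-contained argument is also just the proof of Lemma~\ref{lem_intersection} specialized to this case.
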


Now, we prove a key lemma for computing all the Betti numbers of powers of $I_n$.

\begin{lem}\label{lem_first_case} Assume that $n \ge 3$. For all $s \ge 0$ and $t\ge 0$, the ideal $J = I_{n-1}^s I_n^t$ has a linear free resolution and, if $t\ge 1$, the decomposition 
$$I_{n-1}^s I_n^t = I_{n-1}^s (f_1^t) + x_n I_{n-1}^{s+1} I_n^{t-1}$$ is a Betti splitting.
\end{lem}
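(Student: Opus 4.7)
I would establish the statement by double induction, outer on $n$ and inner on $t$, proving the linear-resolution property and the Betti splitting simultaneously. The first step is to check the ideal identity
\[
I_{n-1}^s I_n^t = I_{n-1}^s(f_1^t) + x_n I_{n-1}^{s+1} I_n^{t-1}.
\]
This follows by expanding $I_n^t$ with $I_n = (f_1) + x_n I_{n-1}$: isolating the pure $(f_1)$ term yields $I_n^t = (f_1^t) + x_n I_{n-1} I_n^{t-1}$, and multiplying by $I_{n-1}^s$ gives the identity.

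The outer induction starts at $n = 3$, where the base input is the classical fact that $I_2^s = (x_1,x_2)^s$ has a linear free resolution. For $n \ge 4$, the outer inductive hypothesis applied at level $n-1$ with the parameters $(0, s)$ in place of $(s, t)$ yields that $I_{n-1}^s$ has a linear free resolution for every $s \ge 0$. In both cases, I then run an inner induction on $t$. The base case $t = 0$ is immediate, since $J = I_{n-1}^s$.

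For $t \ge 1$, I apply Lemma \ref{lem_splitting_criterion_2} to the decomposition above. The first summand $I_{n-1}^s(f_1^t)$ has the same Betti numbers as $I_{n-1}^s$ (multiplication by the monomial $f_1^t$) and hence admits a linear free resolution. By Lemma \ref{lem_mul_x}, the second summand $x_n I_{n-1}^{s+1} I_n^{t-1}$ has the same Betti numbers as $I_{n-1}^{s+1} I_n^{t-1}$, which has a linear free resolution by the inner inductive hypothesis at $(s+1, t-1)$. Both summands are generated in the common degree $d := s(n-2) + t(n-1)$. By Lemma \ref{lemi}, the intersection equals $x_n I_{n-1}^s(f_1^t)$, which is clearly contained in $\mm \cdot I_{n-1}^s(f_1^t)$. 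For the other containment $x_n I_{n-1}^s(f_1^t) \subseteq \mm \cdot x_n I_{n-1}^{s+1} I_n^{t-1}$, I use that for each $j \in \{1,\ldots,n-1\}$, the monomial $f_1/x_j$ is the generator of $I_{n-1}$ corresponding to the $(n-2)$-path in $C_{n-1}$ that skips vertex $j$, so $f_1 \in \mm I_{n-1}$; combining with $f_1 \in I_n$ gives $f_1^t \in \mm I_{n-1} \cdot I_n^{t-1}$, whence the containment.

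Lemma \ref{lem_splitting_criterion_2} then delivers the Betti splitting. Since the intersection has a linear free resolution with generator degree $d + 1$, Lemma \ref{lem_pd_reg_split} forces $\reg_S(I_{n-1}^s I_n^t) = d$, which is the desired linear free resolution, closing both inductions. The main obstacle is the second containment $x_n I_{n-1}^s(f_1^t) \subseteq \mm \cdot x_n I_{n-1}^{s+1} I_n^{t-1}$, which hinges on the structural observation $f_1 \in \mm I_{n-1}$; the rest is bookkeeping around the ideal identity, Lemma \ref{lemi}, and the cited preliminary lemmas.
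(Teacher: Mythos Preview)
Your proposal is correct and follows essentially the same route as the paper: double induction on $n$ and $t$, the intersection formula from Lemma~\ref{lemi}, the splitting criterion Lemma~\ref{lem_splitting_criterion_2}, and then Lemma~\ref{lem_pd_reg_split} to close the linearity. The only difference is that you verify the containment $x_n I_{n-1}^s(f_1^t)\subseteq \mm\cdot x_n I_{n-1}^{s+1}I_n^{t-1}$ explicitly via the structural fact $f_1\in \mm I_{n-1}$, whereas the paper leaves this implicit; since both summands are generated in the same degree $d$ and the intersection in degree $d+1$, the containments $I\cap J\subseteq \mm I$ and $I\cap J\subseteq \mm J$ are automatic from the grading, so your extra step is sound but not strictly needed.
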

\begin{proof} 
We prove by induction on $n\geq 3$ and $t\geq 0$. Assume $n=3$. We have that $I_{2}=(x_1,x_2)$, $f_1=x_1x_2$ and 
$I_3=(f_1)+x_3I_2=(x_1x_2,x_2x_3,x_3x_1)$. Therefore 
$$J=I_2^s I_3^t=(x_1,x_2)^s (x_1x_2,x_2x_3,x_3x_1)^t.$$
If $t=0$ then $J=(x_1,x_2)^s$ has linear free resolution. If $t\ge 1$, we consider the decomposition
$$ J=I_2^s I_3^t = I_2^s (x_1x_2)^t +  x_3 I_2^{s+1} I_3^{t-1}.$$
By induction hypothesis on $t$, the ideal $I_2^{s+1} I_3^{t-1}$ has a linear free resolution and thus $x_3 I_2^{s+1} I_3^{t-1}$ has 
a linear free resolution also. Similarly, $I_2^s (x_1x_2)^t$ has a linear free resolution. By Lemma \ref{lemi} we have
$$ I_{2}^s(x_1x_2)^t \cap x_3I_{2}^{s+1} I_3^{t-1} = x_3 I_{2}^s(x_1x_2)^t.$$
By Lemma \ref{lem_splitting_criterion_2}, the decomposition $J= I_2^s (x_1x_2)^t +  x_3 I_2^{s+1} I_3^{t-1}$ is a Betti splitting. By Lemma \ref{lem_pd_reg_split},
$$\reg J = \max \{ \reg (I_2^2 f_1^t), \reg (x_3 I_2^{s+1} I_3^{t-1})\} = s + 2t.$$
Hence, $J$ has a linear free resolution.

Now, assume $n\geq 4$ and $t=0$. From the induction hypothesis on $n$, the ideal $J=I_{n-1}^sS$ has a linear free resolution. Assume $t\ge 1$. Using a similar argument as in the case $n=3$, we deduce that the decomposition
\begin{equation}\label{eq_3_1}
    J = I_{n-1}^s I_{n}^t = I_{n-1}^s (f_1)^t +  x_n I_{n-1}^{s+1} I_n^{t-1}
\end{equation}
is a Betti splitting and that $J$ has a linear free resolution. 
\end{proof}
We now compute the Betti numbers of powers of $I_n$. To achieve that for all integers $n,s,t,i$ with $n\geq 2$ we set 
$$e(n,s,t,i)= \begin{cases}
    \beta_i (I_{n-1}^s I_n^t) & \text{ if } s \ge 0, t \ge 0, i \ge 0,\\
    0 & \text{ otherwise}.    
\end{cases}$$

From Eq. \eqref{eq_3_1} and the definition of a Betti splitting it follows that for all $n\geq 3$, $s,i\ge 0$, and $t\ge 1$, we have  
\begin{equation}\label{eq_3_2}
e(n,s,t,i) = e(n,s,0,i)+e(n,s+1,t-1,i)+e(n,s,0,i-1).
\end{equation}
Applying Eq. \eqref{eq_3_2} repeatedly, we deduce that 
\begin{equation}\label{eq_3_3}
    e(n,s,t,i) = e(n,s+t,0,i) + \sum_{\ell = 0}^{t-1} (e(n,s+\ell,0,i) + e(n,s+\ell,0,i-1)).
\end{equation}
Since Betti numbers are preserved under flat base extension, we have
\begin{equation}\label{eq_3_4}
e(n,s,0,i)=e(n-1,0,s,i)\text{ for all }n\geq 3\text{ and }s\geq 1.
\end{equation}
Applying Eq. \eqref{eq_3_3} for $s = 0$ and Eq. \eqref{eq_3_4}, we get 

\begin{align*}
e(n,0,t,i) = e(n-1,0,t,i) + \sum_{\ell = 0}^{t-1} (e(n-1,0,\ell,i) + e(n-1,0,\ell,i-1)),\\
e(n,0,t-1,i) = e(n-1,0,t-1,i) + \sum_{\ell = 0}^{t-2} e(n-1,0,\ell,i) + e(n-1,0,\ell,i-1).
\end{align*}
Hence, for all $n \ge 3$ and all $t,i\ge 0$, we have 
\begin{equation}\label{eq_3_5}
    \begin{split}
        e(n,0,t,i) & =  e(n,0,t-1,i) + e(n-1,0,t,i) - e(n-1,0,t-1,i) \\
        &+ e(n-1,0,t-1,i) + e(n-1,0,t-1,i-1)\\
        & = e(n,0,t-1,i) + e(n-1,0,t,i) + e(n-1,0,t-1,i-1).
    \end{split}
\end{equation}
Let $$\Phi(x,y,z):= \sum_{n\ge 2,t,i\geq 0} e(n,0,t,i) x^{n-2} y^t z^i \in \QQ[[x,y,z]].$$ 
\begin{lem} With the notations above, we have 
    $$\Phi(x,y,z) = \frac{1 + yz}{(1 - y)(1 - x-y- xyz) }.$$
\end{lem}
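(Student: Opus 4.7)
The plan is to convert the recurrence \eqref{eq_3_5} into a linear equation in $\Phi(x,y,z)$ by multiplying through by $x^{n-2}y^t z^i$ and summing, then solve it algebraically. Since \eqref{eq_3_5} only holds for $n\ge 3$, the $n=2$ row of the array enters the final identity as an initial condition, carried by the specialization $\Phi(0,y,z)$.

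To compute $\Phi(0,y,z)$, note that $e(2,0,t,i) = \beta_i(I_2^t) = \beta_i((x_1,x_2)^t)$. Since $(x_1,x_2)^t \subset \k[x_1,x_2]$ has a linear minimal free resolution of length one, with $\beta_0 = t+1$ and $\beta_1 = t$, summing gives
\[
\Phi(0,y,z) \;=\; \sum_{t\ge 0}(t+1)y^t \;+\; z\sum_{t\ge 1} t\, y^t \;=\; \frac{1+yz}{(1-y)^2}.
\]

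Next, I would multiply \eqref{eq_3_5} by $x^{n-2}y^t z^i$ and sum over $n\ge 3$, $t\ge 0$, $i\ge 0$, using the convention that $e(n,0,t,i) = 0$ whenever $t$ or $i$ is negative. The left-hand side yields $\Phi(x,y,z) - \Phi(0,y,z)$. Reindexing the three right-hand terms via $t \mapsto t-1$, $n \mapsto n-1$, and $(n,t,i)\mapsto(n-1,t-1,i-1)$ respectively, they contribute
\[
y\bigl(\Phi(x,y,z) - \Phi(0,y,z)\bigr), \qquad x\,\Phi(x,y,z), \qquad xyz\,\Phi(x,y,z).
\]
Collecting the pieces gives the linear identity
\[
(1-x-y-xyz)\,\Phi(x,y,z) \;=\; (1-y)\,\Phi(0,y,z) \;=\; \frac{1+yz}{1-y},
\]
from which the claimed formula follows after dividing by $1-x-y-xyz$.

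There is no serious obstacle; the only care required is the bookkeeping of boundary slices in the shifted sums (the $t=-1$, $i=-1$, and $n=2$ terms). In particular, the reason $\Phi(0,y,z)$ appears on the right-hand side as well is that the shift $t\mapsto t-1$ inside the triple sum restricts the new summation to $n\ge 3$, so one must explicitly subtract the $n=2$ contribution. This produces the two $\Phi(0,y,z)$ occurrences whose combined coefficient $1-y$ exactly matches the denominator $(1-y)^2$ in the initial condition, yielding the clean final expression.
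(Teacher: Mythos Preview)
Your proof is correct and follows essentially the same approach as the paper: both convert the recurrence \eqref{eq_3_5} into the functional equation $\Phi = \Psi + y(\Phi-\Psi) + x\Phi + xyz\Phi$ (where $\Psi = \Phi(0,y,z)$) and solve, using the Betti numbers of $(x_1,x_2)^t$ as the initial condition. The only cosmetic difference is that you simplify $\Phi(0,y,z)$ to $\dfrac{1+yz}{(1-y)^2}$ at the outset, whereas the paper writes it as $\dfrac{1+z}{(1-y)^2} - \dfrac{z}{1-y}$ and simplifies when solving for $\Phi$.
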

\begin{proof} Let 
$$\Psi = \sum_{t,i \ge 0} e(2,0,t,i) y^t z^i \in \QQ[[y,z]].$$
By Eq. \eqref{eq_3_5}, we have 
\begin{align*}
    \Phi(x,y,z) & = \Psi + \sum_{n \ge 3,t,i\ge 0} e(n,0,t-1,i) x^{n-2} y^t z^i  \\
    &+ \sum_{n \ge 3,t,i\ge 0} e(n-1,0,t,i) x^{n-2} y^t z^i  + \sum_{n \ge 3,s,i\ge 0} e(n-1,0,t-1,i-1) x^{n-2} y^t z^i\\
    &= \Psi + y(\Phi(x,y,z) - \Psi) + x \Phi(x,y,z) + xyz\Phi(x,y,z).
\end{align*}
Since $I_2=(x_1,x_2)$, we have the obvious identities:
\begin{equation}\label{rec3}
e(2,0,t,i)=\begin{cases} t+1,& i=0 \\t,& i=1 \\0,& \text{otherwise} \end{cases}\text{ for all }t\geq 0.
\end{equation}
Hence, 
$$\Psi = \sum_{t\ge 0} (t+1)y^t + ty^tz = \frac{1 + z}{(1-y)^2} - \frac{z}{1-y}.$$
Thus, we deduce that 
$$\Phi(x,y,z) = \frac{1 + z - z + yz}{(1-y)(1 - y - x - xyz)} = \frac{1 + yz}{(1 - y)(1 - x-y- xyz) }.$$
The conclusion follows.
\end{proof}
We are now ready for the proof of Theorem \ref{teo1}.
\begin{proof}[Proof of Theorem \ref{teo1}] In $\QQ[[x,y,z]]$ we have 
$$\frac{1}{1-y} = \sum_{b \ge 0} y^b, \text{ and } \frac{1}{1 - x -y -xyz} = \sum_{a \ge 0} (x (1+yz) + y)^a.$$
The Betti number $\beta_i(J_{n,n-1}^t)$ which is $e(n,0,t,i)$ is the coefficient $x^{n-2}y^tz^i$ of $\Phi(x,y,z)$. By the identity above, we deduce that it is the same as the coefficient of $y^t z^i$ in 
$$(1 + yz)^{n-1}  \sum_{b\ge 0} y^b \sum_{a \ge n-2} \binom{a}{n-2} y^{a-(n-2)}.$$
which is equal to $\binom{n-1}{i}$ times the coefficient of $y^{t-i}$ in $(\sum_{b \ge 0} y^b) \sum_{a \ge n-2} \binom{a}{n-2} y^{a-n}$ which is equal to 
$$\binom{n-1}{i} \cdot \sum_{a = n-2}^{n+t-i-2} \binom{a}{n-2} = \binom{n-1}{i} \binom{n + s - i -1}{s-i}.$$
The conclusion follows.
\end{proof}

\begin{cor}\label{cor1}
For all $n\geq 3$ and $t \ge 1$, we have
\begin{enumerate}
\item[(1)] $\pd_S(I_n^t ) = \min\{n-1,t\}$,
\item[(2)] $\reg_S(I_n^t ) = (n-1)t$.
\end{enumerate}
\end{cor}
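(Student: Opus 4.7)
The plan is to read off both statements directly from Theorem \ref{teo1}, which already gives a closed form for every Betti number and asserts that the resolution is linear.

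For part (1), the projective dimension equals the largest $i$ with $\beta_i(I_n^t)\neq 0$. By Theorem \ref{teo1}, $\beta_i(I_n^t)=\binom{n-1}{i}\binom{n+t-i-1}{t-i}$. The first factor is nonzero exactly when $0\le i\le n-1$, and the second factor is nonzero exactly when $t-i\ge 0$, i.e.\ $i\le t$. Therefore the set of indices $i$ with $\beta_i(I_n^t)\neq 0$ is precisely $\{0,1,\ldots,\min\{n-1,t\}\}$, and the conclusion $\pd_S(I_n^t)=\min\{n-1,t\}$ follows.

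For part (2), recall that $I_n$ is generated in degree $n-1$, so every minimal generator of $I_n^t$ has degree $(n-1)t$. Since Theorem \ref{teo1} says $I_n^t$ has a linear free resolution, all nonzero graded Betti numbers $\beta_{i,j}(I_n^t)$ are concentrated in the single strand $j=(n-1)t+i$. Hence
\[
\reg_S(I_n^t)=\sup\{\,j-i : \beta_{i,j}(I_n^t)\neq 0\,\}=(n-1)t,
\]
which is what we wanted.

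There is essentially no obstacle: both claims are immediate bookkeeping consequences of the explicit Betti number formula and the linearity of the resolution established in Theorem \ref{teo1}. The only care needed is in part (1), where one must check that the binomial $\binom{n-1}{i}$ genuinely vanishes for $i>n-1$ and that $\binom{n+t-i-1}{t-i}$ vanishes exactly when $t-i<0$ (and is positive otherwise, since then both its top and bottom entries are nonnegative with top $\ge$ bottom), so that the two constraints combine to give the stated minimum.
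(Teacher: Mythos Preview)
Your proof is correct and follows essentially the same approach as the paper: for (1) you read off the projective dimension from the explicit Betti number formula in Theorem \ref{teo1} by checking when each binomial factor vanishes, and for (2) you use the linearity of the resolution together with the generating degree $(n-1)t$. The paper's argument is identical but more terse.
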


\begin{proof}
(1) From Theorem \ref{teo1}, it follows that 
$$\pd_S(I_n^t)=\max\{i\;:\;\beta_i(I_n^t)=\binom{n-1}{i}\binom{n+t-i-1}{t-i}>0\}=\min\{n-1,t\}.$$
(2) According to Theorem \ref{teo1}, $I_n^t$ has linear free resolution. Hence, its 
regularity is equal to its initial degree, i.e., $\reg(I_n^t)=(n-1)t$, as required.
\end{proof}

\begin{obs} Note that, using the Ausl\"ander-Buchsbaum Theorem, the formula (1) of Corollary \ref{cor1} also follows from \cite[Theorem 3.1]{BC2}.
\end{obs}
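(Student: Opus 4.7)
The plan is to translate a depth statement of \cite[Theorem 3.1]{BC2} into the projective dimension statement via the Auslander--Buchsbaum formula. Since $S$ is a regular local (or standard graded) ring with $\depth S = n$ and $I_n^t$ is a finitely generated graded $S$-module of finite projective dimension, we have
$$\pd_S(I_n^t) = n - \depth_S(I_n^t).$$
So it suffices to identify $\depth_S(I_n^t)$ and show that it equals $n - \min\{n-1,t\} = \max\{1,\,n-t\}$.

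First, I would quote \cite[Theorem 3.1]{BC2}, which computes the depth of powers of the path ideal $J_{n,n-1}$. Its content, translated to our notation, is a closed form for $\depth_S(S/I_n^t)$; to be consistent with Corollary \ref{cor1}(1) it must read
$$\depth_S(S/I_n^t) = \max\{0,\,n-t-1\}$$
for every $n \ge 3$ and $t \ge 1$.

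Next, I would pass from the quotient $S/I_n^t$ to the module $I_n^t$ using the depth lemma applied to the short exact sequence
$$0 \longrightarrow I_n^t \longrightarrow S \longrightarrow S/I_n^t \longrightarrow 0.$$
Because $I_n^t$ is a proper nonzero ideal, $\depth_S(S/I_n^t) \le n-1 < n = \depth_S(S)$, so the exact sequence forces $\depth_S(I_n^t) = \depth_S(S/I_n^t) + 1 = \max\{1,\,n-t\}$.

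Finally, plugging into Auslander--Buchsbaum yields
$$\pd_S(I_n^t) = n - \max\{1,\,n-t\} = \min\{n-1,\,t\},$$
which is exactly formula (1) of Corollary \ref{cor1}. The only real content of the argument is the correct invocation of \cite[Theorem 3.1]{BC2}; the expected subtlety is simply bookkeeping between $\depth S/I_n^t$ and $\depth I_n^t$, which is handled by the depth lemma above.
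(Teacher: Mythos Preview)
Your proposal is correct and is precisely the argument the paper is gesturing at; the remark in the paper has no proof beyond the one-line hint, and you have simply filled in the standard bookkeeping (Auslander--Buchsbaum plus the depth shift between $S/I_n^t$ and $I_n^t$) that the authors leave implicit. The only caveat is that you are reconstructing the exact statement of \cite[Theorem~3.1]{BC2} from what it must say for the remark to hold, which is fine here but should be verified against the source before formal use.
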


\section{Recursive formulae for the Betti numbers of powers of $(n-2)$-path ideals of $n$-cycles}\label{sec_n_2}
Assume that $n \ge 3$ is an integer. In this section, we derive recursive equations for the Betti numbers of the powers of the $(n-2)$-path ideal of the $n$-cycle. We then prove Theorem \ref{teo2}. Recall that $S = \k[x_1,\ldots,x_n]$ is a standard graded polynomial ring over a field $\k$. We fix the following notations throughout the section.
$$f_1 = x_1x_2\cdots x_{n-2}, \ldots, f_n = x_n x_1 \cdots x_{n-3}, I_n = (f_1,\ldots,f_n) \text { and } J_n = (f_1,f_3,\ldots,f_n).$$ 
Note that omitting any $f_j$ from $I_n$, we obtain an ideal isomorphic to $J_n$. Our choice to omit $f_2$ from $I_n$ to get $J_n$ allows us to write $J_n = (f_1) + x_n J_{n-1}$, making the induction arguments easier to grasp. We will prove that there are intertwined relations between the Betti numbers of the following ideals: 
$$B_{n,s,t}:= J_{n}^s I_n^t \text{ and }C_{n,s,t}:=J_n^s (x_1,x_n)^t,$$
where $J_2= I_2 = K[x_1,x_2]$ and $s,t$ are natural numbers. From that, we will deduce our formulae.

First, we decompose $B_{n,s,t} = J_n^s I_n^t$ based on the grading induced by $x_n$.  

\begin{lem}\label{lem_decomposition_for_B_n_s_t} Assume that $n \ge 3$ and $s,t$ are natural numbers. We have 
\begin{align*}
& B_{n,s,t} = K_0 J_{n-1}^0 x_n^0 + \cdots + K_{s+t} J_{n-1}^{s+t} x_n^{s+t},\text{ where }\\
& K_d = \begin{cases}
    f_1^{s-d} (f_1,f_2)^t & \text{ if } d \le \min \{t,s\} \\
    (f_1,f_2)^{t+s-d} & \text{ if } s < d \le t \\
    f_1^{s-d} (f_1,f_2)^t & \text{ if } t < d \le s\\
    (f_1,f_2)^{s+t-d} & \text{ if } \max \{s,t\} < d \le t+s.
\end{cases}
\end{align*}
\end{lem}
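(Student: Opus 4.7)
The plan is to expand $B_{n,s,t} = J_n^s I_n^t$ using the decompositions $J_n = (f_1) + x_n J_{n-1}$ and $I_n = (f_1, f_2) + x_n J_{n-1}$, and then group the resulting terms by the total power of $x_n$ that appears.

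First, since for ideals one has the identity $(A + B)^m = \sum_{k=0}^m A^k B^{m-k}$, I would write
\[
J_n^s = \sum_{a=0}^s f_1^a (x_n J_{n-1})^{s-a}, \qquad I_n^t = \sum_{b=0}^t (f_1, f_2)^b (x_n J_{n-1})^{t-b}.
\]
Multiplying these and setting $d = (s-a) + (t-b)$ yields
\[
B_{n,s,t} = \sum_{d=0}^{s+t} K_d \cdot x_n^d J_{n-1}^d, \qquad K_d = \sum_{\substack{a + b = s + t - d \\ 0 \le a \le s,\; 0 \le b \le t}} f_1^a (f_1, f_2)^b.
\]

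The key observation is that $f_1 \in (f_1, f_2)$, which gives the inclusion $f_1^a (f_1, f_2)^b \subseteq f_1^{a-1}(f_1, f_2)^{b+1}$ whenever $a \ge 1$. Consequently, along each diagonal $a + b = s + t - d$ the summand with the smallest admissible value of $a$ already contains all the others, so $K_d$ collapses to that single maximal summand $f_1^{a_{\min}}(f_1, f_2)^{b_{\max}}$.

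To finish, I would identify that minimal $a$ by case analysis on $d$. The minimum admissible value is $a_{\min} = \max\{0, s-d\}$ and the corresponding maximum is $b_{\max} = \min\{t, s+t-d\}$. Splitting according to whether $d \le s$ or $d > s$ and whether $d \le t$ or $d > t$ reproduces exactly the four cases in the statement; note in particular that in the case $t < d \le s$ one has $a_{\min} = s - d \ge 0$ and $b_{\max} = t$, recovering $K_d = f_1^{s-d}(f_1,f_2)^t$. I expect the only genuinely nontrivial step will be this case-analysis bookkeeping together with checking that the formulas in adjacent cases agree at the boundary values $d = s$ and $d = t$; both are mechanical.
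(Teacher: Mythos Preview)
Your proposal is correct and follows essentially the same route as the paper: expand $J_n^s I_n^t$ via $J_n=(f_1)+x_nJ_{n-1}$ and $I_n=(f_1,f_2)+x_nJ_{n-1}$, group by the $x_n$-degree $d$, and simplify each $K_d$ using $f_1\in(f_1,f_2)$. The only cosmetic difference is that the paper phrases the simplification via the identity $\sum_{i=0}^{a} f_1^{i}(f_1,f_2)^{a-i}=(f_1,f_2)^{a}$ and then runs the four cases explicitly, whereas you collapse the diagonal sum directly to its maximal term $f_1^{a_{\min}}(f_1,f_2)^{b_{\max}}$ with $a_{\min}=\max\{0,s-d\}$, $b_{\max}=\min\{t,s+t-d\}$; these are the same argument.
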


\begin{proof} 
Note that $J_n = f_1 + x_n J_{n-1}$ and $I_n = (f_1,f_2) + x_n J_{n-1}$. Hence, we have 
\begin{align*}
    & B_{n,s,t} = J_n^s I_n^t = (f_1 + x_n J_{n-1})^s ((f_1,f_2) + x_n J_{n-1})^t = \\
        & = \left(\sum_{i=0}^s f_1^{s-i} J_{n-1}^i x_n^i\right) \left( \sum_{j = 0}^t (f_1,f_2)^{t-j} J_{n-1}^j x_n^j\right).
\end{align*}
    It follows that for a natural number $d$ with $0 \le  d \le s+t$, we have 
    $$K_d = \sum_i f_1^{s-i} (f_1,f_2)^{t-(d-i)},$$
    where the sum is taken over all non-negative integers $i$ such that $i \le \min(d,s)$ and $d - i \le t$. 
		In other words, the sum is taken over all integers $i$ such that $\max\{0,d-t\} \le i \le \min \{d,s\}$. We first note the following simple formula
  $$\sum_{i=0}^a f_1^i (f_1,f_2)^{a-i} = (f_1,f_2)^a,$$
  for all $a \ge 0$. We now consider several cases to deduce our formulae.
    \begin{enumerate}
    \item[(i)] $d \le \min \{t,s\}$. In this case, we have 
    $$K_d = \sum_{i=0}^d f_1^{s-i} (f_1,f_2)^{t-d + i} = f_1^{s-d} (f_1,f_2)^{t-d} \cdot  \sum_{i=0}^d f_1^{d-i} ( f_1,f_2)^i = f_1^{s-d} (f_1,f_2)^t.$$
    \item[(ii)] $s \le d \le t$. In this case, we have 
    $$K_d = \sum_{i=0}^s f_1^{s-i} (f_1,f_2)^{t-d +i} = (f_1,f_2)^{t-d} 
 \cdot \sum_{i=0}^s f_1^{s-i}(f_1,f_2)^i = (f_1,f_2)^{t+s -d}.$$
    \item[(iii)] $t \le d \le s$. In this case, we have 
    $$K_d = \sum_{i=d-t}^d f_1^{s-i} (f_1,f_2)^{t-d + i} = f_1^{s-d} \cdot \sum_{j=0}^t  f_1^{t-j} (f_1,f_2)^{j} = f_1^{s-d} (f_1,f_2)^t.$$
    \item[(iv)] $\max\{t,s\} \le d \le t+s$. In this case, we have 
    $$K_d = \sum_{i = d-t} ^s f_1^{s-i} (f_1,f_2)^{t-d + i} = \sum_{j=0}^{s+t-d} f_1^{s+t-d-j} (f_1,f_2)^j = (f_1,f_2)^{s+t-d}.$$
    \end{enumerate}
    The conclusion follows.
\end{proof}

We define recursively the ideals $M_j$ by
\begin{equation}\label{eq_M}
    M_{s+t}:= K_{s+t} J_{n-1}^{s+t}\text{ and }M_{j}  := K_j J_{n-1}^j + x_n M_{j+1}\text{ for }0\leq j\leq s+t-1.
\end{equation}
In particular, we have $B_{n,s,t} = M_0 =  K_0  + x_n M_1$.

\begin{lem}\label{lem_intersection_B_n_s_t}
    With the notations above, for all $0 \le j \le s+t - 1$, we have
    $$K_j J_{n-1}^j \cap x_n M_{j+1} = x_n K_j J_{n-1}^j.$$ 
\end{lem}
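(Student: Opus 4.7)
The key structural observation is that $K_j$ is generated by monomials in $f_1$ and $f_2$, both of which lie in $\k[x_1,\dots,x_{n-1}]$, and $J_{n-1} \subseteq \k[x_1,\dots,x_{n-1}]$; hence no minimal monomial generator of $K_j J_{n-1}^j$ is divisible by $x_n$, whereas every element of $x_n M_{j+1}$ is divisible by $x_n$. Armed with this disparity, the proof mirrors the argument of Lemma~\ref{lem_intersection}.

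For the inclusion $K_j J_{n-1}^j \cap x_n M_{j+1} \subseteq x_n K_j J_{n-1}^j$, I would take a minimal monomial generator $f$ of the intersection and write $f = \lcm(g,h)$ with $g \in G(K_j J_{n-1}^j)$ and $h \in G(x_n M_{j+1})$. Then $\deg_{x_n} g = 0$ and $\deg_{x_n} h \ge 1$, forcing $\deg_{x_n} f \ge 1$; consequently $x_n g$ divides $f$ and $f \in x_n K_j J_{n-1}^j$.

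For the reverse inclusion, the containment $x_n K_j J_{n-1}^j \subseteq K_j J_{n-1}^j$ is immediate, so the real content is $K_j J_{n-1}^j \subseteq M_{j+1}$. I would first verify $(f_1,f_2) \subseteq J_{n-1}$: the generator $x_1\cdots x_{n-3}$ of $J_{n-1}$ divides $f_1 = x_1\cdots x_{n-2}$, and the generator $x_3\cdots x_{n-1}$ of $J_{n-1}$ divides $f_2 = x_2\cdots x_{n-1}$. Since $M_{j+1} \supseteq K_{j+1} J_{n-1}^{j+1}$, it then suffices to prove the purely combinatorial statement
\[
K_j \subseteq (f_1,f_2)\, K_{j+1}.
\]
This is checked by inspecting the four cases in Lemma~\ref{lem_decomposition_for_B_n_s_t}: for $j < s$ one reads off $K_j = f_1 \cdot K_{j+1}$ (the $f_1$-exponent in the explicit formula drops by one as $j \to j+1$), while for $j \ge s$ one reads off $K_j = (f_1,f_2)\cdot K_{j+1}$ (the $(f_1,f_2)$-exponent drops by one). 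The main potential obstacle is ensuring the formulas match consistently at the boundary transitions between the four cases, but since the pieces agree on their common overlaps this check reduces to a trivial subtraction of exponents.
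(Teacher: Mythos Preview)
Your argument is correct and follows essentially the same route as the paper: both reduce the claim to Lemma~\ref{lem_intersection} (you spell out the $\lcm$ argument, the paper cites the lemma), and both verify the key inclusion $K_j J_{n-1}^j \subseteq M_{j+1}$ by showing $K_j \subseteq K_{j+1} J_{n-1}$. Your organization is slightly cleaner---you observe directly that $K_j = f_1 K_{j+1}$ for $j<s$ and $K_j = (f_1,f_2) K_{j+1}$ for $j\ge s$, collapsing the paper's four cases into two---but the substance is identical.
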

\begin{proof}
By Lemma \ref{lem_intersection}, it suffices to prove that $K_j  \subseteq K_{j+1} J_{n-1}$ for all $0\leq j\leq s+t-1$. By Lemma \ref{lem_decomposition_for_B_n_s_t}, we need to consider the following cases:
\begin{enumerate}
\item[(i)] $0\le j < \min\{s,t\}$. Since $f_1\in J_{n-1}$ it follows that $f_1^{s-j} \in f_1^{s-j-1} J_{n-1}$. Therefore, 
               $K_j=f_1^{s-j}(f_1,f_2)^t \subset f_1^{s-j-1}(f_1,f_2)^t J_{n-1}=K_{j+1}J_{n-1}$, as required.
\item[(ii)] $s \le j < t$. Since $f_1,f_2 \in J_{n-1}$, it follows that $(f_1,f_2) \subseteq J_{n-1}$. Therefore, $K_j = (f_1,f_2)^{s+t-j} \subseteq (f_1,f_2)^{s+t-j-1} J_{n-1} = K_{j+1} J_{n-1}$.
\item[(iii)] $t \le j < s$. The argument is the same as that of case (i).
\item[(iv)] $j \ge \max\{s,t\}$. The argument is the same as that of case (ii).
\end{enumerate}
The conclusion follows.
\end{proof}

Now, we decompose $C_{n,s,t} = J_n^s (x_1,x_n)^t$, based on the grading induced by $x_n$.

\begin{lem}\label{lem_decomposition_C_n_s_t} Assume that $n \ge 3$ and $s,t$ are natural numbers. Let $L_d$ be the degree $d$-th $x_n$-graded component of $C_{n,s,t}$, i.e., 
$$C_{n,s,t} = L_0 + x_n L_1 + \cdots + x_n^{s+t} L_{s+t}.$$ 
Then
$$L_d = \begin{cases}
        f_1^{s-d} x_1^{t-d} (f_1 + x_1J_{n-1})^d, & \text{ if } d <  \min \{t,s\}\\
        x_1^{t-d} (f_1 + x_1 J_{n-1})^s, & \text{ if } s \le d < t \\
        f_1^{s-d} J_{n-1}^{d-t} (f_1 + x_1J_{n-1})^t, & \text { if } t \le d < s\\
        J_{n-1}^{d-t} (f_1 + x_1 J_{n-1})^{s+t -d}, & \text{ if } \max\{s,t\} < d \le t +s.
\end{cases}$$
\end{lem}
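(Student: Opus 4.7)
The plan is to expand the two factors of $C_{n,s,t} = J_n^s(x_1,x_n)^t$ according to their $x_n$-degree and then collect the resulting terms, matching each case with the four cases of the lemma.

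First I would record the ideal identity $((g) + I)^a = \sum_{i=0}^a g^{a-i} I^i$, valid for any ideal $I$ and element $g$ of a commutative ring, which follows by directly expanding the product. Applying it to $J_n = f_1 + x_n J_{n-1}$ yields
\[
J_n^s = \sum_{i=0}^s f_1^{s-i} J_{n-1}^i x_n^i,
\]
and since $x_1$ and $x_n$ are distinct variables, $(x_1,x_n)^t = \sum_{j=0}^t x_1^{t-j} x_n^j$. Multiplying these and collecting by $x_n$-degree, the component of degree $d$ in $C_{n,s,t}$ is
\[
L_d = \sum_{\substack{i+j=d \\ 0\le i \le s,\ 0 \le j \le t}} f_1^{s-i} x_1^{t-j} J_{n-1}^i,
\]
so $i$ ranges over $\max\{0, d-t\} \le i \le \min\{d, s\}$.

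Next I would carry out a case analysis on the size of $d$ relative to $s$ and $t$, matching the four cases of the lemma. In each case the idea is the same: factor out the smallest power of $f_1$, of $x_1$, and (in cases iii and iv) of $J_{n-1}$ that is common to every term of $L_d$, and then recognize the remaining sum as $\sum_{k=0}^r f_1^{r-k} (x_1 J_{n-1})^k = (f_1 + x_1 J_{n-1})^r$ for the appropriate value of $r$, using the expansion identity a second time. For instance, when $d < \min\{s,t\}$, pulling out $f_1^{s-d} x_1^{t-d}$ leaves $\sum_{i=0}^d f_1^{d-i} (x_1 J_{n-1})^i = (f_1 + x_1 J_{n-1})^d$; when $\max\{s,t\} < d \le s+t$, pulling out $J_{n-1}^{d-t}$ and reindexing $k = i - (d-t)$ leaves $(f_1 + x_1 J_{n-1})^{s+t-d}$; cases (ii) and (iii) are handled analogously by pulling out $x_1^{t-d}$ or $f_1^{s-d} J_{n-1}^{d-t}$ respectively.

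I expect the main obstacle to be purely clerical: keeping the index ranges straight in each of the four cases, and making sure the reindexing of the inner sum is consistent with the factored-out prefactor. There is no conceptual obstruction once the expansion identity $((g)+I)^a = \sum_{i=0}^a g^{a-i} I^i$ is in hand; the proof is essentially parallel to that of Lemma \ref{lem_decomposition_for_B_n_s_t}, with the role of $(f_1,f_2)$ there played by $x_1 J_{n-1}$ here.
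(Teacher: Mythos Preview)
Your proposal is correct and follows essentially the same route as the paper: expand $J_n^s=(f_1+x_nJ_{n-1})^s$ and $(x_1,x_n)^t$ by $x_n$-degree, collect to obtain $L_d=\sum_i f_1^{s-i}x_1^{t-(d-i)}J_{n-1}^i$ over $\max\{0,d-t\}\le i\le\min\{d,s\}$, and in each of the four cases factor out the common powers and recognize the remaining sum as $(f_1+x_1J_{n-1})^r$. The only cosmetic point is that your line $(x_1,x_n)^t=\sum_j x_1^{t-j}x_n^j$ should be read as a sum of principal ideals, but the meaning and the argument are identical to the paper's.
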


\begin{proof} 
We have $C_{n,s,t} = (f_1 + x_n J_{n-1})^s (x_1,x_n)^t$. Thus, the degree $d$-th component of $C_{n,s,t}$ is 
$$L_d = \sum_i f_1^{s-i} J_{n-1}^i x_1^{t-(d-i)},$$
where the sum is taken over all non-negative integers $i$ such that $i\le \min\{d,s\}$ and $d - i \le t$, i.e., $\max\{0,d -t\} \le i \le \min\{d,s\}$.

We consider the following cases to obtain the desired formulae:
\begin{enumerate}
\item[(i)] $d < \min\{s,t\}$. In this case, we have 
$$L_d = \sum_{i = 0}^d f_1^{s-i} J_{n-1}^i x_1^{t-d + i} = f_1^{s-d} x_1^{t-d} \sum_{i=0}^d f_1^{s-i} (x_1J_{n-1})^i = f_1^{s-d} x_1^{t-d} (f_1 + x_1 J_1)^d.$$
\item[(ii)] $s \le d < t$. In this case, we have 
$$L_d = x_1^{t-d} \sum_{i=0}^s f_1^{s-i} (x_1J_{n-1})^i = x_1^{t-d} (f_1 + x_1 J_{n-1})^s.$$
\item[(iii)] $t \le d \le s$. In this case, we have 
$$L_d = \sum_{i=d-t}^d f_1^{s-i} J_{n-1}^i x_1^{t-d + i}= f_1^{s-d} J_{n-1}^{d-t} \sum_{j=0}^t  f_1^{t - j} (x_1J_{n-1})^j = f_1^{s-d} J_{n-1}^{d-t} (f_1 + x_1 J_{n-1})^t.$$
\item[(iv)] $d \ge \max\{s,t\}$. In this case, we have 
$$L_d = \sum_{i=d-t}^s f_1^{s-i} J_{n-1}^i x_1^{t-d+i} = J_{n-1}^{d-t} \sum_{j=0}^{s+t-d} f_1^{s+t-d-j} (x_1J_{n-1})^j = J_{n-1}^{d-t} (f_1  + x_1J_{n-1})^{s+t-d}.$$
\end{enumerate}
The conclusion follows.
\end{proof}

We define recursively the ideals $N_j$ by 
\begin{equation}\label{eq_N}
    N_{s+t}:=L_{s+t}\text{ and }N_j:= L_j + x_n N_{j+1}\text{ for }0\leq j\leq s+t-1.
\end{equation} 
In particular, we have $C_{n,s,t} = L_0 + x_n N_1$.

\begin{lem}\label{lem_intersection_C_n} 
 With the notations above, we have 
 $$L_i \cap x_n N_{i+1} = x_n L_i\text{ for all }0\leq i\leq s+t-1.$$
\end{lem}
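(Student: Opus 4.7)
The plan is to mirror the argument used to establish Lemma \ref{lem_intersection_B_n_s_t}. The idea is to invoke Lemma \ref{lem_intersection} with the substitutions $J = L_i$, $K = N_{i+1}$, $s=0$, and $t=1$, which collapses the desired equality $L_i \cap x_n N_{i+1} = x_n L_i$ to the single containment $L_i \subseteq N_{i+1}$. The hypothesis $x_n \notin \supp(L_i) \cap \supp(N_{i+1})$ of Lemma \ref{lem_intersection} is immediate: the building blocks $f_1$, $x_1$, and the generators of $J_{n-1}$ that appear in Lemma \ref{lem_decomposition_C_n_s_t} all live in $\k[x_1,\ldots,x_{n-1}]$, so $x_n \notin \supp(L_i)$.

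Since $L_{i+1} \subseteq N_{i+1}$ by the recursive definition \eqref{eq_N}, it is enough to prove the stronger inclusion $L_i \subseteq L_{i+1}$ for every $0 \le i \le s+t-1$. I would do this by a case analysis that places the pair $(i, i+1)$ among the four ranges of Lemma \ref{lem_decomposition_C_n_s_t}. After cancelling the common factors $f_1^{s-i-1}$, $x_1^{t-i-1}$, or $J_{n-1}^{i-t}$ as appropriate, each inclusion reduces to one of three elementary facts: (a) $x_1 f_1 \in (f_1) + x_1 J_{n-1}$, which is trivial; (b) $f_1 \in J_{n-1}$, since $f_1 = x_{n-2}(x_1\cdots x_{n-3})$ and $x_1\cdots x_{n-3}$ is a generator of the $(n-3)$-path ideal $J_{n-1}$; and (c) $(f_1) + x_1 J_{n-1} \subseteq J_{n-1}$, which follows from (b).

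Boundary transitions where $i$ and $i+1$ fall in different ranges of Lemma \ref{lem_decomposition_C_n_s_t} must be verified separately, but the formulas for $L_d$ agree at the common boundary values of $d$, so each transition likewise reduces to one of (a)--(c). The main obstacle is purely bookkeeping across the (up to six) possible range transitions; no single verification is deep, and the overall flow closely parallels the proof of Lemma \ref{lem_intersection_B_n_s_t}.
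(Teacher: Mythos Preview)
Your proposal is correct and follows essentially the same route as the paper: both reduce the intersection statement to the chain of inclusions $L_j \subseteq L_{j+1}$ via Lemma \ref{lem_intersection}, and both verify these inclusions by the four-range case analysis from Lemma \ref{lem_decomposition_C_n_s_t}, using exactly the facts $f_1 \in J_{n-1}$ and $(f_1)+x_1J_{n-1}\subseteq J_{n-1}$. Your write-up is slightly more explicit about the boundary transitions and the elementary fact (a), but the argument is the same.
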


\begin{proof}
    By Lemma \ref{lem_intersection}, it suffices to prove that $L_{j} \subseteq L_{j+1}$ for all $0\leq j \leq s+t-1$. By Lemma \ref{lem_decomposition_C_n_s_t}, we need to consider the following cases. 
    \begin{enumerate}
    \item[(i)] $0 \le j < \min \{s,t\}$. The inclusion is clear from the formula for $L_j$. 
    \item[(ii)] $s \le j < t$. The inclusion follows from the formula for $L_j$. 
    \item[(iii)] $t \le j < s$. The conclusion follows from the fact that $f_1 \in J_{n-1}$ and the formula for $L_j$. 
    \item[(iv)] $j\ge \max\{s,t\}$. The conclusion follows from the fact that $(f_1 + x_1J_{n-1}) \subseteq J_{n-1}$ and the formula for $L_j$. 
\end{enumerate}
The conclusion follows.
\end{proof}

We see that the ideal $J_n^{s}$ is a common ideal that appears in $B_{n,s,t}$ and $C_{n,s,t}$. It is the base case for the induction step. So we treat it first. By the definition of $J_n$, we have $J_n = (f_1) + x_n J_{n-1}$ and $f_1 \in J_{n-1}$. Thus, we have

\begin{lem}\label{lem_A_n_s_t} 
Assume that $n \ge 3$. For all $s \ge 0$ and $t\ge 0$, the ideal $A_{n,s,t} = J_{n-1}^s J_n^t$ has a linear free resolution and, if $t\ge 1$, the decomposition 
$$J_{n-1}^s J_n^t = J_{n-1}^s (f_1^t) + x_n J_{n-1}^{s+1} J_n^{t-1}$$ is a Betti splitting.
\end{lem}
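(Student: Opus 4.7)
The plan is to run a double induction on $n \ge 3$ and $t \ge 0$, mirroring the proof of Lemma \ref{lem_first_case}. The key step is establishing the intersection identity
\[
J_{n-1}^s (f_1^t) \cap x_n J_{n-1}^{s+1} J_n^{t-1} = x_n J_{n-1}^s (f_1^t),
\]
the $J_n$-analog of Lemma \ref{lemi}. This follows from exactly the argument used for Lemma \ref{lem_intersection}: since $x_n \notin \supp(J_{n-1})$, every minimal monomial generator of the intersection must be divisible by $x_n$, which forces it into $x_n J_{n-1}^s(f_1^t)$. The reverse inclusion uses $f_1 \in J_{n-1}$ and $f_1^{t-1} \in J_n^{t-1}$ to write $x_n J_{n-1}^s(f_1^t) \subseteq x_n J_{n-1}^{s+1} J_n^{t-1}$.

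For the base cases: when $n = 3$, under the convention $J_2^s = S$, we have $A_{3,s,t} = J_3^t = (x_1, x_3)^t$, a power of an ideal generated by a regular sequence of variables, which has a linear free resolution; for $t \ge 1$ the decomposition $(x_1, x_3)^t = (x_1^t) + x_3(x_1, x_3)^{t-1}$ satisfies the hypotheses of Lemma \ref{lem_splitting_criterion_2} and is a Betti splitting. For $n \ge 4$ and $t = 0$, we have $A_{n,s,0} = J_{n-1}^s = A_{n-1,0,s}$, which has a linear free resolution by the outer induction on $n$.

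Inductive step ($n \ge 4$, $t \ge 1$). Expanding $J_n = (f_1) + x_n J_{n-1}$ together with $f_1 \in J_{n-1}$ yields $J_n^t = (f_1^t) + x_n J_{n-1} J_n^{t-1}$, and multiplying by $J_{n-1}^s$ gives the claimed decomposition. Iterating Lemma \ref{lem_mul_x} shows that $J_{n-1}^s(f_1^t)$ has the same Betti numbers as $A_{n-1,0,s}$ (linear by the outer induction), while $x_n J_{n-1}^{s+1} J_n^{t-1}$ has the same Betti numbers as $A_{n,s+1,t-1}$ (linear by the inner induction on $t$). A short degree count confirms both summands are minimally generated in the common degree $d = (n-3)s + (n-2)t$, so both have a $d$-linear resolution. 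The intersection identity together with Lemma \ref{lem_splitting_criterion_2} then yield the Betti splitting, and Lemma \ref{lem_pd_reg_split} forces $\reg A_{n,s,t} = d$, completing the induction. The only subtlety is bookkeeping the degree shifts coming from multiplication by $f_1^t$ and by $x_n$ so that the two summands have resolutions with the same initial degree; no new conceptual obstacle arises beyond what was already handled in Lemma \ref{lem_first_case}.
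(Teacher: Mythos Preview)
Your proof is correct and follows exactly the template of Lemma \ref{lem_first_case}, which is precisely what the paper does (its entire proof reads ``The proof is similar to that of Lemma \ref{lem_first_case}''). The intersection identity is the case $J=(f_1)\subseteq K=J_{n-1}$ of Lemma \ref{lem_intersection}, and the remaining inductive bookkeeping matches the paper's intended argument.
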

\begin{proof}
The proof is similar to that of Lemma \ref{lem_first_case}.
\end{proof}
Hence, the Betti numbers of $J_{n-1}^s J_n^t$ share the same recursive equations with those of $J_{n-1,n-2}^s J_{n,n-1}^t$. Furthermore, $J_3$ has the same Betti numbers as $J_{2,1}$. We deduce that $J_{n-1}^s J_n^t$ has the same Betti numbers as $J_{n-2,n-3}^s J_{n-1,n-2}^t$. In particular, we have 
\begin{cor}\label{corjn-1}
For all $n\ge 2$ and $t\ge 1$ we have
$$\beta_i(J_{n}^t) = \binom{n-2}{i}\binom{n+t-i-2}{t-i}.$$
In particular, $\pd(J_n^t) = \min(n-2,t)$.
\end{cor}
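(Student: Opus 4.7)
The plan is to mirror the inductive argument of Section \ref{sec_n_1}. Lemma \ref{lem_A_n_s_t} supplies a Betti splitting $J_{n-1}^s J_n^t = J_{n-1}^s(f_1^t) + x_n J_{n-1}^{s+1} J_n^{t-1}$ (for $t \ge 1$) that is structurally identical to the one in Lemma \ref{lem_first_case}, so the recursive machinery developed there for $I_{n-1}^s I_n^t$ can be recycled essentially verbatim.

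First, I would apply Lemma \ref{lem_intersection} with $(f_1) \subseteq J_{n-1}$, noting that $x_n \notin \supp(J_{n-1})$, to evaluate
$$J_{n-1}^s(f_1^t) \cap x_n J_{n-1}^{s+1}J_n^{t-1} = x_n J_{n-1}^s (f_1)^t.$$
Combining this with the Betti splitting and Lemma \ref{lem_mul_x} (which lets me strip off the monomial factors $x_n$ and $f_1^t$), I would derive the recurrence
$$\beta_i(J_{n-1}^s J_n^t) = \beta_i(J_{n-1}^s) + \beta_i(J_{n-1}^{s+1} J_n^{t-1}) + \beta_{i-1}(J_{n-1}^s),$$
which is formally identical to equation \eqref{eq_3_2} for $e(n,s,t,i) = \beta_i(I_{n-1}^s I_n^t)$.

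Next, I would prove by induction on $n \ge 2$ that $\beta_i(J_{n-1}^s J_n^t) = e(n-1, s, t, i)$. The base case $n = 3$ is direct: since $J_2 = S$ and $J_3 = (x_1, x_3)$ has the same Betti numbers as $I_2 = (x_1, x_2)$ (by flat base extension and a relabeling of variables), we get $\beta_i(J_3^t) = \beta_i(I_2^t) = e(2, s, t, i)$. For the inductive step, both sides satisfy the same recurrence in $t$, so it suffices to check equality at $t = 0$: using flat base extension to compute $\beta_i(J_{n-1}^s)$ in the smaller ring $\k[x_1,\ldots,x_{n-1}]$ and applying the inductive hypothesis for $n - 1$, I would get $\beta_i(J_{n-1}^s) = e(n-2, 0, s, i)$, which equals $e(n-1, s, 0, i)$ by \eqref{eq_3_4}.

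Taking $s = 0$ and invoking Theorem \ref{teo1} with $n$ replaced by $n - 1$ yields $\beta_i(J_n^t) = \binom{n-2}{i}\binom{n+t-i-2}{t-i}$. The projective dimension formula then drops out since this binomial product is nonzero precisely when $i \le \min(n-2, t)$. The only delicate point is bookkeeping the index shift $n \mapsto n-1$ between Sections \ref{sec_n_1} and \ref{sec_n_2} and invoking flat base extension cleanly to compare Betti numbers computed in different ambient polynomial rings; once that is set up correctly, the recurrence does all the work and no new calculation is needed.
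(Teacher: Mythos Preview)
Your proposal is correct and follows essentially the same approach as the paper: the paper's proof is the one-line observation that Lemma \ref{lem_A_n_s_t} yields the same recursion as Lemma \ref{lem_first_case}, with the base case $J_3 \cong J_{2,1}$, so that $\beta_i(J_{n-1}^sJ_n^t)=e(n-1,s,t,i)$ and Theorem \ref{teo1} (with $n\mapsto n-1$) finishes. Your write-up simply makes the index shift and the matching of boundary data explicit; the only cosmetic slip is that you announce induction on $n\ge 2$ but start the base case at $n=3$ --- just note separately that $n=2$ is trivial since $J_2=S$.
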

\begin{proof}
    The conclusion follows from the proof of Theorem \ref{teo1} and Lemma \ref{lem_A_n_s_t}.
\end{proof}

Now comes the technical step toward computing the Betti numbers of $I_n^s$.

\begin{lem}\label{lem_linear_free_resolutions_B_C} For all $n \ge 2$, $s,t\ge 0$, the ideals $B_{n,s,t}$ and $C_{n,s,t}$ have linear free resolutions.     
\end{lem}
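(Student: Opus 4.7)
The plan is to prove the lemma by simultaneous induction on $n$ for both $B_{n,s,t}$ and $C_{n,s,t}$, since the inductive step for each uses the inductive hypothesis on the other. The base case $n=2$ is immediate: $B_{2,s,t}$ is the unit ideal and $C_{2,s,t}=(x_1,x_2)^t$, both linearly resolved.

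For the inductive step $(n-1\Rightarrow n)$ applied to $B_{n,s,t}=M_0$, I would run a downward induction on $j$ from $s+t$ to $0$, showing that each $M_j$ from Eq.~\eqref{eq_M} has a linear free resolution with initial degree $(s+t)(n-2)-j$. The crucial ingredient is that every building block $K_dJ_{n-1}^d$ already has a linear free resolution. Using the factorizations $f_1=x_1\cdot(x_2\cdots x_{n-2})$ and $(f_1,f_2)=(x_2\cdots x_{n-2})(x_1,x_{n-1})$, each of the four cases for $K_d$ in Lemma~\ref{lem_decomposition_for_B_n_s_t} rewrites $K_dJ_{n-1}^d$ as a pure monomial times an ideal of the form $(x_1,x_{n-1})^r J_{n-1}^d=C_{n-1,d,r}$; Lemma~\ref{lem_mul_x} combined with the inductive hypothesis at level $n-1$ then yields linearity. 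Taking $M_{s+t}=K_{s+t}J_{n-1}^{s+t}$ as the base of the downward recursion, the step uses Lemma~\ref{lem_intersection_B_n_s_t}: the intersection $x_nK_jJ_{n-1}^j$ sits inside $\mm K_jJ_{n-1}^j$ trivially, and inside $\mm x_nM_{j+1}$ via the inclusion chain $K_jJ_{n-1}^j\subseteq K_{j+1}J_{n-1}^{j+1}\subseteq M_{j+1}$, along which generator degrees strictly increase. Lemma~\ref{lem_splitting_criterion_2} thus turns each decomposition $M_j=K_jJ_{n-1}^j+x_nM_{j+1}$ into a Betti splitting, and Lemma~\ref{lem_pd_reg_split} propagates linearity upward.

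For $C_{n,s,t}=N_0$, the same downward scheme works verbatim with Lemma~\ref{lem_intersection_C_n} and Eq.~\eqref{eq_N} replacing their $B$-analogues, the key algebraic input this time being the identity $f_1+x_1J_{n-1}=x_1I_{n-1}$. This holds because $f_1=x_1(x_2\cdots x_{n-2})$ and the monomial $x_2\cdots x_{n-2}$ is precisely the generator of $I_{n-1}$ that is omitted when forming $J_{n-1}$. Substituting into the four formulas of Lemma~\ref{lem_decomposition_C_n_s_t}, each $L_d$ becomes a monomial times $I_{n-1}^d$, $I_{n-1}^s$, $B_{n-1,d-t,t}$, or $B_{n-1,d-t,s+t-d}$, all of which carry linear free resolutions by the inductive hypothesis, so the iterative splitting goes through just as for $B_{n,s,t}$.

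The main obstacle is the bookkeeping in the four-way case analysis of $d$ versus $s$ and $t$: one must verify the claimed factorizations of $K_dJ_{n-1}^d$ and $L_d$ in each regime, and then confirm at every step of the downward recursion that the two summands of $M_j$ (respectively $N_j$) share the same initial degree while their intersection sits one degree deeper, so that Lemma~\ref{lem_splitting_criterion_2} genuinely applies. Once these pieces are in place the iteration is mechanical, and the simultaneous inductive hypothesis on $B_{n-1,\cdot,\cdot}$ and $C_{n-1,\cdot,\cdot}$ closes the loop.
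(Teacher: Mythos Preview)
Your proposal is correct and follows essentially the same route as the paper's own proof: induction on $n$, then a downward induction on $j$ through the filtrations $M_j$ and $N_j$, using Lemmas~\ref{lem_intersection_B_n_s_t}, \ref{lem_intersection_C_n}, and \ref{lem_splitting_criterion_2} to produce Betti splittings at each step. Your identification of $K_jJ_{n-1}^j$ as a monomial times $C_{n-1,j,r}$ (via $(f_1,f_2)=(x_2\cdots x_{n-2})(x_1,x_{n-1})$) and of $L_d$ as a monomial times $B_{n-1,u,v}$ (via $f_1+x_1J_{n-1}=x_1I_{n-1}$) is exactly the mechanism the paper uses, though the paper states it more tersely (``$K_jJ_{n-1}^j$ is of the form $f_1^a(f_1,f_2)^bJ_{n-1}^j$'' and ``$L_j$ is of the form $f_1^ax_1^bJ_{n-1}^uI_{n-1}^v$''). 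One cosmetic difference: the paper invokes Lemma~\ref{lem_A_n_s_t} separately for the base cases $B_{n,s,0}=C_{n,s,0}=J_n^s$ and $M_{s+t}=J_{n-1}^{s+t}$, whereas your scheme absorbs these into the general $C_{n-1,\cdot,0}$ case of the inductive hypothesis; either works.
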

\begin{proof}
    We prove by induction on $n$ and then on $t$. The base case $n = 2$ is clear. Now assume that the statement holds for $n-1$. We have $B_{n,s,0} = C_{n,s,0} = A_{n,0,s}$ have linear free resolution by Lemma \ref{lem_A_n_s_t}. Now assume that $t \ge 1$ and the conclusion holds for $t-1$. We will now consider the statement for $B_{n,s,t}$ and $ C_{n,s,t}$ respectively.

    For $B_{n,s,t}$, we now prove by downward induction on $j$ that $M_j$ has a linear free resolution. We have $M_{s+t} = J_{n-1}^{s+t} = A_{n-1,0,s+t}$, which has a linear free resolution by Lemma \ref{lem_A_n_s_t}. By Lemma \ref{lem_decomposition_for_B_n_s_t}, $K_j J_{n-1}^j$ is of the form $f_1^a (f_1,f_2)^b J_{n-1}^j$. Hence, it has a linear free resolution by induction on $n$. By Lemma \ref{lem_intersection_B_n_s_t} and Lemma \ref{lem_splitting_criterion_2}, the decomposition $M_j = K_j J_{n-1}^j + x_n M_{j+1}$ is a Betti splitting. Hence, $M_j$ has a linear free resolution.

    For $C_{n,s,t}$, we prove by downward induction on $j$ that $N_j$ has a linear free resolution. We have $N_{s+t} = I_{n-1}^s = B_{n-1,0,s}$ which has a linear free resolution by induction on $n$. Also, $L_j$ is of the form $f_1^ax_1^b J_{n-1}^u I_{n-1}^v$ for some $a,b,u,v$, hence, has a linear resolution by induction on $n$ as well. By Lemma \ref{lem_intersection_C_n} and Lemma \ref{lem_splitting_criterion_2}, the decomposition $N_j = L_j + x_n N_{j+1}$ is a Betti splitting. Hence, $N_j$ has a linear free resolution. The conclusion follows.    
\end{proof}

For integers $n\ge 2$ and $s,t,i\ge 0$ we set
$$b(n,s,t,i) = \beta_i(B_{n,s,t})\text{ and }c(n,s,t,i) = \beta_i(C_{n,s,t}).$$ 
If $s<0$, $t<0$ or $i<0$ we set $b(n,s,t,i)=c(n,s,t,i)=0$. From Lemmas \ref{lem_decomposition_for_B_n_s_t}, \ref{lem_intersection_B_n_s_t}, \ref{lem_decomposition_C_n_s_t}, \ref{lem_intersection_C_n}, \ref{lem_linear_free_resolutions_B_C} we can deduce formulae for $b$ coefficients in terms of $c$ coefficients and vice versa. For convenience in writing these formulae, we set 
$$\tilde b(n,s,t,i) = b(n,s,t,i) + b(n,s,t,i-1)\text{ and }\tilde c(n,s,t,i) = c(n,s,t,i) + c(n,s,t,i-1).$$

\begin{lem}\label{lem_b_c} Assume that $n \ge 3$ and $s,t,i\geq 0$. Let 
$$\Lambda(s,t) = \{ (j,t)\;:\;0\leq j\leq s\}\cup \{(s+j,t-j)\;:\;1\leq j\leq t-1\}.$$ 
We have
$$b(n,s,t,i) = b(n-1,s+t,0,i) + \sum_{(a,b) \in \Lambda(s,t)} \tilde c(n-1,a,b,i).$$
\end{lem}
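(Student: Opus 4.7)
The plan is to telescope the Betti splittings built into the sequence $M_{s+t}, M_{s+t-1}, \ldots, M_0 = B_{n,s,t}$ from \eqref{eq_M}. By Lemma \ref{lem_intersection_B_n_s_t} combined with Lemma \ref{lem_splitting_criterion_2} (whose linear-resolution hypotheses have already been verified in the proof of Lemma \ref{lem_linear_free_resolutions_B_C}), each decomposition $M_j = K_j J_{n-1}^j + x_n M_{j+1}$ is a Betti splitting whose intersection is $x_n K_j J_{n-1}^j$. Since multiplication by $x_n$ preserves Betti numbers (Lemma \ref{lem_mul_x}), the Betti-splitting identity reduces to $\beta_i(M_j) = \beta_i(K_j J_{n-1}^j) + \beta_{i-1}(K_j J_{n-1}^j) + \beta_i(M_{j+1})$, and telescoping over $j = 0, 1, \ldots, s+t-1$ yields
$$\beta_i(B_{n,s,t}) \;=\; \beta_i(M_{s+t}) \;+\; \sum_{j=0}^{s+t-1} \bigl(\beta_i(K_j J_{n-1}^j) + \beta_{i-1}(K_j J_{n-1}^j)\bigr).$$

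For the base term, the fourth branch of Lemma \ref{lem_decomposition_for_B_n_s_t} forces $K_{s+t} = (f_1,f_2)^{0} = S$, so $M_{s+t} = J_{n-1}^{s+t}$. Since this ideal lives inside $\k[x_1,\ldots,x_{n-1}]$, flat base change gives $\beta_i(M_{s+t}) = \beta_i(J_{n-1}^{s+t}) = b(n-1, s+t, 0, i)$, which accounts for the first summand on the right-hand side of the claim.

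The heart of the argument is identifying each $\beta_i(K_j J_{n-1}^j)$ with a $c$-coefficient by exploiting the factorization
$$(f_1,f_2) \;=\; x_2 x_3 \cdots x_{n-2} \cdot (x_1, x_{n-1}).$$
Iterated application of Lemma \ref{lem_mul_x} then shows that each $K_j J_{n-1}^j$ shares its Betti numbers with $(x_1,x_{n-1})^b J_{n-1}^a = C_{n-1,a,b}$ for a suitable pair $(a,b)$. Reading this pair off the four branches of Lemma \ref{lem_decomposition_for_B_n_s_t}: whenever $K_j = f_1^{s-j}(f_1,f_2)^t$ we obtain $(a,b) = (j,t)$, and whenever $K_j = (f_1,f_2)^{s+t-j}$ with $j < s+t$ we obtain $(a,b) = (j,\, s+t-j) = (s+k,\, t-k)$ with $k = j-s$. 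As $j$ runs over $\{0,\ldots,s+t-1\}$, these pairs sweep out exactly the set $\Lambda(s,t)$, producing the second summand and finishing the proof.

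The main bookkeeping obstacle is the case split $s \le t$ versus $s > t$ inside Lemma \ref{lem_decomposition_for_B_n_s_t}: one must check that the first and third branches (both giving $K_j = f_1^{s-j}(f_1,f_2)^t$) together cover precisely $0 \le j \le s$, and that the second and fourth branches (both giving $K_j = (f_1,f_2)^{s+t-j}$) together cover precisely $s < j \le s+t$, regardless of the ordering of $s$ and $t$, so that no pair in $\Lambda(s,t)$ is missed or double-counted and the index $j = s+t$ is correctly separated off as the base term.
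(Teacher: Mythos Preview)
Your proof is correct and follows the same approach as the paper: downward induction (equivalently, telescoping) along the chain $M_0, \ldots, M_{s+t}$ using the Betti splittings established in Lemma \ref{lem_linear_free_resolutions_B_C} and Lemma \ref{lem_intersection_B_n_s_t}. In fact you are more explicit than the paper at the key step, spelling out the factorization $(f_1,f_2) = x_2\cdots x_{n-2}\,(x_1,x_{n-1})$ and the resulting identification $\beta_i(K_jJ_{n-1}^j)=c(n-1,a,b,i)$, whereas the paper simply writes ``the conclusion follows from Lemma \ref{lem_decomposition_for_B_n_s_t}.''
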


\begin{proof} 
We recall Eq. \eqref{eq_M}
$$
    M_{s+t} = K_{s+t} J_{n-1}^{s+t} = J_{n-1}^{s+t} \text{ and }
    M_j  = K_j J_{n-1}^j + x_n M_{j+1}.
$$
First, we have 
\begin{equation}\label{eq_ind_b}
    \beta_i(M_{s+t}) = \beta_i(J_{n-1}^{s+t}) = c(n-1,s+t,0,i) = b(n-1,s+t,0,i).
\end{equation} 
Let $\lambda_j(s,t) = \{ (a,b) \in \Lambda(s,t) \mid a \ge j\}$. We prove by downward induction on $j$ that 
$$\beta_i(M_j) = b(n-1,s+t,0,i) + \sum_{(a,b) \in \Lambda_j } \tilde c(n-1,a,b,i).$$
The base case $j = s+t$ follows from Eq. \eqref{eq_ind_b}. Now assume that $0\le j < s+t$. By the proof of Lemma \ref{lem_linear_free_resolutions_B_C}, the decomposition $$M_j = K_j J_{n-1}^j + x_n M_{j+1}$$ is a Betti splitting. Furthermore, by Lemma \ref{lem_intersection_B_n_s_t}, 
$$K_j J_{n-1}^j \cap x_n M_{j+1} = x_n K_j J_{n-1}^j.$$ 
Hence, by the definition of Betti splitting, we have
$$\beta_i(M_j) = \beta_i(M_{j+1}) + \beta_i(K_j J_{n-1}^j) + \beta_{i-1} (K_j J_{n-1}^j).$$
The conclusion follows from Lemma \ref{lem_decomposition_for_B_n_s_t}.    
\end{proof}

\begin{lem}\label{lem_c_b}  Assume that $n\ge 3$ and $s,t,i \ge 0$. We define the multiset $\Delta(s,t)$ as follows. If $s \le t$ then 
$$\Delta(s,t) = \{(0,0), \ldots,  (0,s)^{(t-s+1)},(1,s-1), \ldots, (s-1,1)\},$$ where the notation $(0,s)^{(t-s+1)}$ means that it appears $t-s+1$ times in $\Delta(s,t)$. If $t < s$ then 
$$\Delta(s,t) = \{(0,0), \ldots,  (0,t), (1,t),\ldots, (s-t,t), (s-t+1,t-1), \ldots, (s-1,1)\}.$$
We have 
$$ c(n,s,t,i) =  b(n-1,s,0,i) + \sum_{(a,b) \in \Delta(s,t)} \tilde b(n-1,a,b,i).$$
\end{lem}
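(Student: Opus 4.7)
The plan is to mirror the proof of Lemma \ref{lem_b_c}, now applied to the chain $N_{s+t} \supset \dots \supset N_0 = C_{n,s,t}$ from Eq.~\eqref{eq_N}. By Lemma \ref{lem_linear_free_resolutions_B_C}, each decomposition $N_j = L_j + x_n N_{j+1}$ is a Betti splitting, and by Lemma \ref{lem_intersection_C_n} the intersection is $x_n L_j$. The splitting property gives $\beta_i(N_j) = \beta_i(N_{j+1}) + \beta_i(L_j) + \beta_{i-1}(L_j)$, and iterating downward produces
$$\beta_i(C_{n,s,t}) = \beta_i(N_{s+t}) + \sum_{d=0}^{s+t-1} \bigl(\beta_i(L_d) + \beta_{i-1}(L_d)\bigr).$$
At the top of the chain, Lemma \ref{lem_decomposition_C_n_s_t} yields $N_{s+t} = L_{s+t} = J_{n-1}^s$, contributing exactly the base term $b(n-1, s, 0, i)$ of the stated formula.

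The decisive step is to rewrite each $L_d$ so that its Betti numbers become $b$-values. For this I would first record the ideal identity
$$(f_1) + x_1 J_{n-1} \;=\; x_1\, I_{n-1} \qquad \text{in } \k[x_1,\dots,x_{n-1}].$$
It holds because $I_{n-1} = J_{n-1} + (g_2)$, where $g_2 = x_2 \cdots x_{n-2}$ is the generator removed from $I_{n-1}$ to produce $J_{n-1}$, and $f_1 = x_1 g_2$. Consequently $(f_1 + x_1 J_{n-1})^d = x_1^d\, I_{n-1}^d$, and by Lemma \ref{lem_mul_x} multiplication by any monomial is invisible to Betti numbers.

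Plugging this identity into the four formulae for $L_d$ in Lemma \ref{lem_decomposition_C_n_s_t}, each $L_d$ becomes, up to a monomial factor, of the form $J_{n-1}^a I_{n-1}^b = B_{n-1,a,b}$; one reads off $(a,b) = (0,d)$ in the range $d \le \min\{s,t\}$, $(0,s)$ when $s \le d \le t$, $(d-t,\, t)$ when $t \le d \le s$, and $(d-t,\, s+t-d)$ when $\max\{s,t\} \le d \le s+t$. Hence $\beta_i(L_d) + \beta_{i-1}(L_d) = \tilde b(n-1,a,b,i)$ with the pair $(a,b)$ determined by $d$.

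Finally, I would verify that, as $d$ runs over $\{0, 1, \dots, s+t-1\}$, the resulting multiset of pairs $(a,b)$ coincides with $\Delta(s,t)$. Splitting the analysis by $s \le t$ versus $t < s$ handles both clauses in the definition of $\Delta$: in the former, the central block $s \le d \le t$ produces $(0,s)$ with multiplicity $t-s+1$ while the flanking blocks contribute $(0,0),\dots,(0,s-1)$ and $(1,s-1),\dots,(s-1,1)$ once each; in the latter, the blocks $t \le d < s$ and $s \le d < s+t$ produce the horizontal strip $(1,t),\dots,(s-t,t)$ followed by the descending staircase $(s-t+1,t-1),\dots,(s-1,1)$. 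The telescoping and the multiset bookkeeping are routine; the main obstacle—the conceptual heart of the argument—is the identity $(f_1) + x_1 J_{n-1} = x_1\, I_{n-1}$, which is the bridge that lets one replace the hybrid ideal appearing in each $L_d$ by the clean $I_{n-1}$ governing the $b$-coefficients.
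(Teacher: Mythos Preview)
Your proposal is correct and follows essentially the same approach as the paper: iterate the Betti splittings $N_j = L_j + x_n N_{j+1}$ down from $N_{s+t}=J_{n-1}^s$, then identify each $L_d$ with a monomial times $J_{n-1}^a I_{n-1}^b$ via Lemma~\ref{lem_decomposition_C_n_s_t}. You have actually made explicit the key identity $(f_1)+x_1 J_{n-1}=x_1 I_{n-1}$ (hinted at in the proof of Lemma~\ref{lem_linear_free_resolutions_B_C}, where the paper notes that each $L_j$ has the form $f_1^a x_1^b J_{n-1}^u I_{n-1}^v$), and carried out the multiset bookkeeping that the paper leaves to the reader.
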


\begin{proof}
We recall Eq. \eqref{eq_N}
$$
    N_{s+t} = J_{n-1}^s \text{ and }
    N_j  = L_j + x_n N_{j+1}.
$$
First, we have 
\begin{equation}\label{eq_ind_c}
    \beta_i(N_{s+t}) = \beta_i(J_{n-1}^s) = b(n-1,s,0,i).
\end{equation}
Let $\Delta_j$ be the (multi)set consisting of the last $j$ elements of $\Delta(s,t)$. We prove by downward induction on $j$ that 
$$\beta_i(N_j) = b(n-1,s,0,i) + \sum_{(a,b) \in \Delta_j(s,t)} b(n-1,a,b,i,i-1).$$ 
The base case $j = s+t$ follows from Eq. \eqref{eq_ind_c}. By the proof of the Lemma \ref{lem_linear_free_resolutions_B_C}, the decomposition 
$$N_j = L_j + x_n N_{j+1}$$
is a Betti splitting. Furthermore, by Lemma \ref{lem_intersection_C_n}, 
$$L_j \cap x_n N_{j+1} = x_n L_j.$$
Hence, by the definition of Betti splittings, we have 
$$ \beta_i(N_j) = \beta_i (N_{j+1}) + \beta_i (L_j) + \beta_{i-1} (L_j).$$
The conclusion follows from Lemma \ref{lem_decomposition_C_n_s_t}.
\end{proof}

From Lemma \ref{lem_b_c} and Lemma \ref{lem_c_b}, we can derive the self-recurrent relation among the $b$ coefficients. For convenience, for all natural number $n \ge 2$ and all integers $u,v,i$, we set 
$$\dbtilde{b}(n,u,v,i) = b(n,u,v,i) + 2b(n,u,v,i-1) + b(n,u,v,i-2).$$
First, we give the form of the self-recurrent relation.

\begin{lem}\label{lem_support_non_zero_coeffs} Assume that $n \ge 4$ and $s,t,i \ge 0$. Let $r = \lfloor \frac{s + t}{2} \rfloor$ and 
\begin{equation*}
    \Gamma(s,t):= \{(u,v) \in \NN^2 \mid 0 < v \le \min(r,t), u + 2v \le s+t\}\} \cup \{(0,0)\}.
\end{equation*}
We have
\begin{align*}
    b(n,s,t,i) &= b(n-1,s+t,0,i) + \sum_{u=0}^{s+t-1} \tilde b(n-2,u,0,i) \\
    &+ \sum_{(u,v) \in \Gamma(s,t) } f(u,v) \dbtilde{b}(n-2,u,v,i)
\end{align*}
where $f(u,v)$'s are some non-negative integers.
\end{lem}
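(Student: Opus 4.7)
The plan is to chain Lemma \ref{lem_b_c} and Lemma \ref{lem_c_b}, expressing $b(n,s,t,i)$ as a single $b(n-1, s+t, 0, i)$ term plus contributions at level $n-2$, and then to delineate the support of the resulting combination by inspecting the explicit forms of $\Lambda(s,t)$ and $\Delta(a,b)$. The case $t = 0$ is degenerate: $b(n,s,0,i) = \beta_i(J_n^s)$ is already handled by Corollary \ref{corjn-1}, so I assume $t \ge 1$ throughout.

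Applying Lemma \ref{lem_b_c} produces
$$b(n,s,t,i) = b(n-1,s+t,0,i) + \sum_{(a,b) \in \Lambda(s,t)} \tilde c(n-1,a,b,i),$$
and summing Lemma \ref{lem_c_b} at degrees $i$ and $i-1$ yields
$$\tilde c(n-1,a,b,i) = \tilde b(n-2,a,0,i) + \sum_{(u,v) \in \Delta(a,b)} \dbtilde b(n-2,u,v,i).$$
Substituting and interchanging the order of summation decomposes $b(n,s,t,i)$ into three pieces: the base term $b(n-1, s+t, 0, i)$, a single sum of $\tilde b(n-2, a, 0, i)$ over $(a,b) \in \Lambda(s,t)$, and a double sum of $\dbtilde b(n-2, u, v, i)$ over all pairs $(a,b) \in \Lambda(s,t)$ and $(u,v) \in \Delta(a,b)$.

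The first two pieces line up immediately with the target expression: the first coordinates of $\Lambda(s,t)$ exhaust $\{0, 1, \ldots, s+t-1\}$ exactly once, so the middle sum becomes $\sum_{u=0}^{s+t-1}\tilde b(n-2, u, 0, i)$. Defining $f(u,v)$ to be the total multiplicity with which $(u,v)$ appears in the multiset union $\bigcup_{(a,b) \in \Lambda(s,t)} \Delta(a,b)$ casts the third piece in the form $\sum_{(u,v)} f(u,v)\, \dbtilde b(n-2, u, v, i)$. What remains is to prove that $f(u,v) = 0$ whenever $(u,v) \notin \Gamma(s,t)$.

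This support check is the main bookkeeping obstacle. From the explicit descriptions in Lemma \ref{lem_c_b}, a case analysis on whether $a \le b$ or $b < a$ shows that every $(u,v) \in \Delta(a,b)$ satisfies $v \le \min(a,b)$ and $u + v \le a$. For $(a,b) \in \Lambda(s,t)$ one has $a + b \le s + t$ and $b \le t$. Hence if $v \ge 1$, then
$$v \le \min(a,b) \le \lfloor (a+b)/2 \rfloor \le r, \qquad v \le b \le t,$$
and $u + 2v \le (u+v) + v \le a + b \le s + t$, so $(u,v) \in \Gamma(s,t)$. Moreover, since $b \ge 1$ for every $(a,b) \in \Lambda(s,t)$ when $t \ge 1$, direct inspection of the formula for $\Delta(a,b)$ shows that no $(u,0)$ with $u \ge 1$ ever appears, so the unique entry with $v = 0$ is $(0,0) \in \Gamma(s,t)$. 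This completes the support argument and hence the proof plan.
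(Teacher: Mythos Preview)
Your proof is correct and follows the same overall approach as the paper: chain Lemma~\ref{lem_b_c} with Lemma~\ref{lem_c_b}, then check that the resulting support lies in $\Gamma(s,t)$. The paper verifies $\Delta(a,b)\subseteq\Gamma(s,t)$ by an explicit case split on the position of $(a,b)$ along $\Lambda(s,t)$, writing out each $\Delta(a,b)$ in full; your argument is more economical, extracting once and for all the uniform bounds $v\le\min(a,b)$ and $u+v\le a$ for $(u,v)\in\Delta(a,b)$ and deducing the three defining inequalities of $\Gamma(s,t)$ directly from $a+b\le s+t$ and $b\le t$. One small point: your dismissal of $t=0$ (``handled by Corollary~\ref{corjn-1}'') does not actually verify the claimed decomposition in that case, but the paper's case analysis likewise tacitly assumes $b\ge 1$, and the lemma is only ever applied with $t\ge 1$ downstream, so this is harmless.
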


\begin{proof}
From Lemma \ref{lem_b_c} it follows that
\begin{align*}
b(n,s,t,i) & = b(n-1,s+t,0,i) + \tilde c(n-1,0,t,i) + \cdots + \tilde c(n-1,s,t,i) \\
& + \tilde c(n-1,s+1,t-1,i) + \cdots + \tilde c(n-1,s+t-1,1,i).
\end{align*}
By Lemma \ref{lem_c_b} each term $\tilde c(n-1,a,b,i)$ gives rise to a term $\tilde b(n-2,a,0,i)$ and the terms $\tilde b(n-2,u,v,i)$ for $(u,v) \in \Delta(a,b)$. Let 
$$\Omega = \bigcup_{(a,b) \in \in \Lambda(s,t)} \Delta(a,b).$$
Then, we have 
\begin{align*}
    b(n,s,t,i) &= b(n-1,s+t,0,i) + \sum_{u=0}^{s+t-1} \tilde b(n-2,u,0,i) \\
    &+ \sum_{(u,v) \in \Omega } f(u,v) \dbtilde{b}(n-2,u,v,i)
\end{align*}
for some non-negative integers $f(u,v)$. It remains to show that $\Delta(a,b) \subseteq \Gamma(s,t)$ for all $(a,b) \in \Lambda(s,t)$. 
\begin{enumerate}
\item[(i)] Assume that $ (a,b) = (j,t)$ for some $j \le \min (r,s)$. Then  
$$\Delta(j,t) = \{ (u,v) \in \NN^2 \mid 1 \le  v \le j-1, u + v = j \} \cup \{(0,0),\ldots,(0,j)\} \subseteq \Gamma(s,t).$$
\item[(ii)] Assume that $(a,b) = (j,t)$ for some $j$ such that $\min(r,s) < j \le s$. Then 
$$\Delta(j,t) = \{(0,0), \ldots,(0,t),(1,t), \ldots,(s-t,t),(s-t+1,t-1), \ldots,(s-1,1)\} \subseteq \Gamma(s,t).$$
\item[(iii)] Assume that $(a,b) = (j,s+t - j)$ for some $j > s$. If $j \le s+t - j$, the argument is similar to case (i). If $j > s+t-j$ then 
\begin{align*}
    \Delta(j,s+t-j) &= \{(0,0), \ldots,(0,s+t-j), (1,s+t-j), \ldots,(2j-s-t,s+t-j)\}\\
    &\cup \{(2j-s-t+1,s+t-j-1), \ldots,(j-1,1)\}\subset \Gamma(s,t).
\end{align*}
\end{enumerate}
That concludes the proof of the lemma.
\end{proof}

The following pictures represent the sets $\Lambda(s,t)$, $\Delta(u,v)$ and $\Gamma(s,t)$. To get $\Gamma(s,t)$, first we draw the blue line corresponding to $\Lambda(s,t)$. Then, for each point $(u,v)$ on the blue line, we draw red lines corresponding to $\Delta(u,v)$. $\Gamma(s,t)$ consists of all the integer points on the red lines.

\begin{center}
\begin{tikzpicture}[scale=0.50]
\draw[blue] (0,5) -- (3,5);
\draw[blue](3,5) -- (7,1);
\draw[->] (0,0) -- (8,0) node[anchor=north west] {$s$};
\draw[->] (0,0) -- (0,7) node[anchor=south east] {$t$};
\node at (3,5.5) {\small{(s,t)}};
\node at (7.5,0.75) {\small{(s+t-1,1)}};
\node at (4,-1) {$\Lambda(s,t)$};

\draw[red] (14,0) -- (14,4);
\draw[red](14,4) -- (17,1);
\draw[->] (14,0) -- (19,0) node[anchor=north west] {$s$};
\draw[->] (14,4) -- (14,6) node[anchor=south east] {$t$};
\node at (17,4) {$s\le t$};
\node at (13,4) {\small{(0,s)}};
\node at (17.5,0.75) {\small{(s-1,1)}};

\draw[red] (23,0) -- (23,3);
\draw[red](23,3) -- (25,3);
\draw[red](25,3) -- (27,1);
\draw[->] (23,0) -- (29,0) node[anchor=north west] {$s$};
\draw[->] (23,3) -- (23,6) node[anchor=south east] {$t$};
\node at (27,4) {$t<s$};

\node at (21,-1) {$\Delta(s,t)$};
\node at (25,3.5) {\small{(s-t,t)}};
\node at (27.5,0.75) {\small{(s-1,1)}};

\draw[blue] (8,-6) -- (12,-6);
\draw[blue](12,-6) -- (17,-11);
\draw[red](8,-12) -- (8,-7);
\draw[red](8,-11) -- (16,-11);
\draw[red](8,-10) -- (9,-11);
\draw[red](8,-9) -- (10,-11);
\draw[red](8,-8) -- (11,-11);
\draw[red](8,-7) -- (12,-11);
\draw[red](8,-7) -- (9,-7);
\draw[red](9,-7) -- (13,-11);
\draw[red](8,-8) -- (11,-8);
\draw[red](11,-8) -- (14,-11);
\draw[red](8,-9) -- (13,-9);
\draw[red](13,-9) -- (15,-11);
\draw[red](8,-10) -- (15,-10);
\draw[red](15,-10) -- (16,-11);
\draw[->] (8,-12) -- (19,-12) node[anchor=north west] {$s$};
\draw[->] (8,-7) -- (8,-3) node[anchor=south east] {$t$};

\node at (14,-13) {$\Gamma(s,t)$};
\end{tikzpicture}
\end{center}

We then deduce:
\begin{cor}\label{cor_pd} Assume that $n \ge 4$ and $s,t \ge 0$. Let $q = \max\{ \pd (J_{n-2}^u I_n^v) \mid (u,v) \in \Gamma(s,t) \text{ and } f(u,v) \neq 0\}$. Then 
$$\pd (J_n^s I_n^t) = \max \{ \min\{n-3,s+t\}, q+2\}.$$     
\end{cor}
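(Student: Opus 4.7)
The plan is to deduce the corollary directly from Lemma \ref{lem_support_non_zero_coeffs}. The key observation is that the expansion
$$b(n,s,t,i) = b(n-1,s+t,0,i) + \sum_{u=0}^{s+t-1} \tilde b(n-2,u,0,i) + \sum_{(u,v) \in \Gamma(s,t)} f(u,v)\, \dbtilde{b}(n-2,u,v,i)$$
is a non-negative integer combination of Betti numbers (and shifts thereof), since each $f(u,v)$ is a non-negative integer and each $b(\cdot)$ is by definition a Betti number. Consequently no cancellation can occur on the right-hand side, so the largest $i$ for which $b(n,s,t,i) > 0$ equals the maximum, over the three groups of summands, of the largest $i$ for which some summand in that group is positive. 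Hence the entire task reduces to computing the top non-vanishing index contributed by each group.

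I would then analyze the three groups separately. For the first term, $b(n-1, s+t, 0, i) = \beta_i(J_{n-1}^{s+t})$, Corollary \ref{corjn-1} gives a maximal non-zero index equal to $\pd(J_{n-1}^{s+t}) = \min\{n-3, s+t\}$. For the second sum, $\tilde b(n-2, u, 0, i) = \beta_i(J_{n-2}^u) + \beta_{i-1}(J_{n-2}^u)$ is non-zero up to $i = \pd(J_{n-2}^u) + 1 = \min\{n-4, u\} + 1$ by Corollary \ref{corjn-1} applied to $n-2$; taking the maximum over $u \in \{0,\ldots, s+t-1\}$ yields $\min\{n-4, s+t-1\} + 1 = \min\{n-3, s+t\}$, which coincides with the first term's contribution. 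For the third sum, $\dbtilde{b}(n-2, u, v, i) = b(n-2,u,v,i) + 2 b(n-2,u,v,i-1) + b(n-2,u,v,i-2)$ is non-zero up to $i = \pd(B_{n-2, u, v}) + 2$, and taking the maximum over those $(u, v) \in \Gamma(s, t)$ with $f(u,v) \neq 0$ yields $q + 2$ by the definition of $q$.

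Combining the three contributions via the overall maximum gives the claimed identity $\pd(J_n^s I_n^t) = \max\{\min\{n-3, s+t\}, q+2\}$. The only conceptual subtlety is the non-cancellation across the right-hand side of Lemma \ref{lem_support_non_zero_coeffs}, and this is immediate from the non-negativity of the $f(u,v)$ together with the non-negativity of all Betti numbers; there is no interference between the top-degree contributions of distinct summands. Everything else is a routine bookkeeping of maximal degrees, using Corollary \ref{corjn-1} as the single external input for the pure-power cases and the inductive object $B_{n-2,u,v}$ for the mixed cases.
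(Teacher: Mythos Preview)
Your proof is correct and follows essentially the same route as the paper's own argument: both invoke Lemma \ref{lem_support_non_zero_coeffs}, use non-negativity of all summands to preclude cancellation, and then read off the three contributions via Corollary \ref{corjn-1} and the definition of $q$. You have simply made explicit the verification $\min\{n-4,s+t-1\}+1=\min\{n-3,s+t\}$ that the paper leaves implicit.
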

\begin{proof} Let 
$p = \max \{\pd (J_{n-2}^u) \mid u = 0, \ldots, s+t-1\}$. 
By the definition of projective dimension, we have 

\begin{align*}
    \pd(J_n^s I_n^t) = &\max \{i \mid b(n,s,t,i) \neq 0\} \\
    = &\max \{ i \mid b(n-1,s+t,0,i) \neq 0 \\
    &\text{ or } \tilde b(n-2,u,0,i) \neq 0 \text{ for some } u = 0, \ldots, s+t - 1 \\
    & \text{ or } \dbtilde{b}(n-2,u,v,i) \neq 0 \text{ for some} (u,v) \in \Gamma(s,t) \text{ and } f(u,v) \neq 0\}.
\end{align*}
Since $\max\{a \in A, b\in B \} = \max \{\max\{a \in A\} , \max \{b \in B\} \} $, we deduce that 
$$\pd (J_n^s I_n^t) = \max \{\pd (J_{n-1}^{s+t}), p + 1, q+ 2\}.$$
By Corollary \ref{corjn-1}, we have $\pd (J_{n-1}^{s+t}) = \min(n-3,s+t)$ and $p = \min(n-4,s+t-1)$. The conclusion follows.
\end{proof}

We now prove one of the main results of the section about the projective dimension of $J_n^sI_n^t$.
\begin{thm}\label{thm_pd_n-2}
    Assume that $t \ge 1$ and $n \ge 2$. Then $\reg (J_n^sI_n^t) = (t+s)(n-2)$ and 
    $$\pd(J_n^sI_n^t) = \begin{cases}
        \min (n-1, 2(s+t)) & \text{ if } n \text{ is odd } \\
        \min (n-2,2(s+t)) & \text{ if } n \text{ is even}.
    \end{cases}$$
\end{thm}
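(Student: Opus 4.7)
The regularity part is immediate from Lemma \ref{lem_linear_free_resolutions_B_C}: the ideal $J_n^sI_n^t$ has a linear free resolution and every minimal generator is a product of $s+t$ factors each of degree $n-2$, so the common generator degree $(s+t)(n-2)$ equals $\reg(J_n^sI_n^t)$. For the projective dimension I would induct on $n$, with base cases $n=2$ and $n=3$. The case $n=2$ is trivial because $J_2=I_2=S$ forces $J_2^sI_2^t=S$ and $\pd=0=\min(0,2(s+t))$. For $n=3$, where $I_3=(x_1,x_2,x_3)=\mm$ and $J_3=(x_1,x_3)$, a direct verification is needed: one checks (e.g.\ by exhibiting a monomial in the socle of $S/J_3^sI_3^t$, so that $\depth(S/J_3^sI_3^t)=0$, and applying Auslander--Buchsbaum) that $\pd(J_3^sI_3^t)=2=\min(2,2(s+t))$ whenever $s+t\ge 1$.

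For the inductive step, fix $n\ge 4$ and assume the theorem for $n-2$. Corollary \ref{cor_pd} reduces the computation to
$$\pd(J_n^sI_n^t)=\max\bigl\{\min(n-3,s+t),\,q+2\bigr\},\quad q=\max\bigl\{\pd(J_{n-2}^u I_{n-2}^v)\colon (u,v)\in\Omega\bigr\},$$
where $\Omega=\bigcup_{(a,b)\in\Lambda(s,t)}\Delta(a,b)$ is the (multiset) support of the coefficients $f(u,v)$. The decisive combinatorial input is the identification of the extremal point $(s+t-2,1)$ in $\Omega$ whenever $s+t\ge 2$: one checks $(s+t-1,1)\in\Lambda(s,t)$ (choosing $j=t-1$ in the second family, or $j=s$ if $t=1$) and that $(s+t-2,1)\in\Delta(s+t-1,1)\subseteq\Omega$ from the explicit description in Lemma \ref{lem_c_b}. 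The inclusion $\Omega\subseteq\Gamma(s,t)$ from Lemma \ref{lem_support_non_zero_coeffs} then forces $u+v\le s+t-1$ for every $(u,v)\in\Omega$ with $v\ge 1$, with equality attained exactly at $(s+t-2,1)$.

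Applying the induction hypothesis to $(u,v)\in\Omega$ with $v\ge 1$ gives $\pd(J_{n-2}^uI_{n-2}^v)=\min(n-3,2(u+v))$ when $n$ is odd and $\min(n-4,2(u+v))$ when $n$ is even, so the maximum over such $(u,v)$ is attained at $(s+t-2,1)$ and equals $\min(n-3,2(s+t-1))$ or $\min(n-4,2(s+t-1))$ respectively. The residual point $(0,0)\in\Omega$ contributes $\pd(S)=0$, which never dominates; the edge case $s+t=1$, where $\Omega=\{(0,0)\}$, is treated separately and yields $q+2=2$, matching $\min(n-1,2)$ or $\min(n-2,2)$. Hence $q+2$ equals $\min(n-1,2(s+t))$ for odd $n$ and $\min(n-2,2(s+t))$ for even $n$. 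A short case analysis comparing $2(s+t)$ with $n-1$ (resp.\ $n-2$) and $s+t$ with $n-3$ confirms $q+2\ge\min(n-3,s+t)$, so the maximum in Corollary \ref{cor_pd} is realized by $q+2$ and the induction closes. The main obstacle is the combinatorial bookkeeping inside $\Omega$: showing not only that $(s+t-2,1)$ lies in $\Omega$ with non-zero multiplicity, but also that no other element could dominate it once the inductive hypothesis is fed into each parity.
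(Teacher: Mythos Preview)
Your proposal is correct and follows the same essential route as the paper: both arguments rest on Corollary \ref{cor_pd}, both identify $(s+t-2,1)$ as the dominating point of $\Omega$ (the paper's Step~1 is exactly your verification that this point lies in $\Delta(s+t-1,1)$), and both treat $n=2,3$ as base cases. The only real organizational difference is that the paper splits the inductive step in two: its Step~3 establishes the formula for $t=1$ by a nested induction on $n$ and then on $s$, invoking Alilooee--Faridi \cite{AF} for the anchor $\pd(I_n)$, and its Step~4 then handles $t\ge 2$ by appealing back to Step~3 at the extremal point. You instead run a single induction on $n$ in steps of two, valid for all $(s,t)$ simultaneously, and dispose of the edge case $s+t=1$ directly via $\Omega=\{(0,0)\}$, $q+2=2$. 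This is a genuine economy: it makes the appeal to \cite{AF} unnecessary and collapses Steps~3 and~4 into one pass. One small caveat: your $n=3$ base case is only sketched (socle element plus Auslander--Buchsbaum); the paper's Step~2 handles it more explicitly via Lemma \ref{lem_b_c}, which reduces the question to $\pd\bigl((x_1,x_2)^b\bigr)=1$ for some $(a,b)\in\Lambda(s,t)$ with $b\ge 1$, and is arguably cleaner than producing a socle element by hand.
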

\begin{proof}
By Lemma \ref{lem_linear_free_resolutions_B_C}, $J_n^sI_n^t$ has a linear free resolution, hence $\reg (J_n^s I_n^t) = (t+s) ( n-2)$. 
Now we prove the formula for the projective dimension. For ease of reading, we divide the proof into several steps.

\vspace{1.5mm}
\noindent {\textbf{Step 1.}} With the notation as in Lemma \ref{lem_support_non_zero_coeffs}, $f(s+t-2,1) = 1$. Indeed, we have $(s+t-1,1) \in \Lambda(s,t)$ and $(s+t-2,1) \in \Delta(s+t-1,1)$. Hence, $f(s+t-2,1) \ge 1$. Furthermore, for any $(a,b) \in \Lambda(s,t)$ with $b > 1$, we have $a \le s+t - 2$. In particular, for any $(u,v) \in \Delta(a,b)$ we have $u+v \le a = s+t - 2$. Thus, $(s+t-2,1) \notin \Delta(a,b)$. Therefore, $f(s+t-2,1) = 1$.

\vspace{1.5mm}
\noindent{\textbf{Step 2.}} The base case $n = 2$ and $n = 3$. If $n = 2$, then $J_n^sI_n^t = S$; hence $\pd(J_n^sI_n^t) = 0$. Assume that $n = 3$. If $s = 0$ and $t = 1$ then the conclusion is clear. Thus, we may assume that $s + t \ge 2$. By Lemma \ref{lem_b_c}, we have 
$$b(3,s,t,i) = b(2,s+t,0,i) + \sum_{(a,b) \in \Lambda(s,t)} \tilde c(2,a,b,i).$$
By the definition of projective dimension, we have 
\begin{align*}
    \pd(J_3^s I_3^t) &= \max \{i \mid b(3,s,t,i) \neq 0\} \\
    &= \max \{ i \mid b(2,s+t,0,i) \neq 0 \text{ or } \tilde c(2,a,b,i) \neq 0 \text{ for some } (a,b) \in \Lambda(s,t)\} \\
    &= \max \{\pd (J_2^{s+t}), \pd (J_2^a (x_1,x_2)^b) + 1 \mid (a,b) \in \Lambda(s,t)\}.
\end{align*}
Now, for any $b \ge 1$ and $a \ge 0$ we have $J_2^a (x_1,x_2)^b = (x_1,x_2)^b$ has projective dimension $1$. Since $t \ge 1$ and $s + t \ge 2$ there exists $(a,b) \in \Lambda(s,t)$ with $b \ge 1$. Hence, $\pd (J_n^s I_n^t) = 2$. 

\vspace{1.5mm}
\noindent{\textbf{Step 3.}} We now prove by induction on $n$ then on $s$ that 
$$\pd(J_n^sI_n) = \begin{cases}
        \min (n-1, 2(s+1)) & \text{ if } n \text{ is odd } \\
        \min (n-2,2(s+1)) & \text{ if } n \text{ is even}.
    \end{cases}$$
By Step 2, we may assume that $n \ge 4$. The base case $s = 0$ follows from the result of Alilooee and Faridi \cite{AF}. Now assume that $s \ge 1$. We have $\Gamma(s,1) = \{ (0,0), (0,1), \ldots, (s-1,1)\}$. Let 
$$q:= \max \{ \pd (J_{n-2}^u I_{n-2} \mid u = 0,\ldots s-1\}.$$
By induction on $n$, we have 
\begin{equation}\label{eq_q_term}
    q =  \begin{cases}
        \min (n-3, 2s) & \text{ if } n \text{ is odd } \\
        \min (n-4,2s) & \text{ if } n \text{ is even}.
    \end{cases}.
\end{equation}
By Corollary \ref{cor_pd}, we have 
$$\pd (J_n^s I_n) = \max \{ \min \{ n-3, s+1\}, q+2\} = q + 2.$$
By Eq. \eqref{eq_q_term}, Step 3 follows.

\vspace{1.5mm}
\noindent{\textbf{Step 4.}} General case. Now assume that $t \ge 2$. By Corollary \ref{cor_pd}, we have $\pd (J_n^s I_n^t) = \max \{ \min \{ n-3, s+t\} , q+ 2\}$, where 
$$q = \max \{\pd  (J_{n-2}^u I_{n-2}^v) \mid (u,v) \in \Gamma(s,t) \text{ and } f(u,v) \neq 0\}.$$ 
By induction, Step 1, and Step 3, we deduce that 
$$q = \pd (J_{n-2}^{s+t-2} I_{n-2}) = \begin{cases}
        \min (n-3, 2(s+t-1)) & \text{ if } n \text{ is odd } \\
        \min (n-4,2 (s+t-1)) & \text{ if } n \text{ is even}.
    \end{cases}.$$
The conclusion follows.
\end{proof}

Now, we can prove Theorem \ref{teo2}:
\begin{proof}[Proof of Theorem \ref{teo2}]
    The conclusion follows by taking $s = 0$ in Theorem \ref{thm_pd_n-2}.
\end{proof}

\begin{obs} From Lemma \ref{lem_b_c} and Lemma \ref{lem_c_b}, we can give formulae for $f(u,v)$ for all $(u,v) \in \Gamma(s,t)$. Nonetheless, they are hard to use to derive the formulae for the Betti numbers of powers of $I_n$. In the following section, we will derive self-recurrent equations for the $b$ coefficients, which are easier to use.    
\end{obs}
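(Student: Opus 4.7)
The final statement is a remark rather than a theorem, so what is really being asserted is (i) that explicit formulas for the coefficients $f(u,v)$ in Lemma \ref{lem_support_non_zero_coeffs} can be extracted directly from Lemmas \ref{lem_b_c} and \ref{lem_c_b}, and (ii) that these formulas are unwieldy. My plan is to substantiate (i) by writing down the combinatorial formula and to indicate why (ii) holds.

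The plan is to track the double substitution that produced Lemma \ref{lem_support_non_zero_coeffs}. Starting from Lemma \ref{lem_b_c}, one expands $b(n,s,t,i)$ as a sum indexed by $\Lambda(s,t)$ of terms $\tilde c(n-1,a,b,i)$, and then applies Lemma \ref{lem_c_b} to each such $\tilde c$. The $\dbtilde b(n-2,u,v,i)$ terms with $v \ge 1$ arise exactly when $(u,v)$ sits in some $\Delta(a,b)$ for $(a,b)$ on the broken line $\Lambda(s,t)$. Matching coefficients yields
$$f(u,v) \;=\; \#\bigl\{(a,b)\in \Lambda(s,t) : (u,v)\in \Delta(a,b)\bigr\},$$
where elements of the multiset $\Delta(a,b)$ are counted with their multiplicities (so the point $(0,\min(a,b))$ contributes $|a-b|+1$ whenever it is the target).

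To convert this count into an explicit piecewise formula, I would split $\Lambda(s,t)$ into its horizontal branch $\{(j,t) : 0 \le j \le s\}$ and its descending diagonal $\{(s+j,t-j) : 1 \le j \le t-1\}$, and for each branch resolve the membership $(u,v) \in \Delta(a,b)$ by the four cases defining $\Delta$. The pictures in the paper show that, for fixed $(u,v) \in \Gamma(s,t)$, the set of admissible $(a,b)$ is itself a short path in the plane whose length depends on the region of $\Gamma(s,t)$ containing $(u,v)$ (edge of $\Gamma$, interior, or near $u = 0$). Computing the length of this path in each region gives the desired closed form for $f(u,v)$.

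The main obstacle, and the reason for the hedge in the remark, is that the resulting formula for $f(u,v)$ is genuinely piecewise: its value depends on whether $u = 0$, whether $u + 2v \le s + t - 1$ or equals $s + t$, and on the sign of $a - b$ in the contributing $\Delta(a,b)$'s. Substituting such a formula into the self-recurrence of Lemma \ref{lem_support_non_zero_coeffs} produces a triple sum with several boundary corrections, and these corrections do not visibly collapse into the parity-separated binomial expressions of Theorem \ref{teo3}. This is precisely why the next section abandons direct evaluation of $f(u,v)$ and instead verifies that both sides of the claimed formula satisfy the same self-recurrence together with matching boundary data.
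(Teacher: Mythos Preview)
Your reading is correct: the statement is a remark, not a theorem, and the paper offers no formal proof of it. Your substantiation---that $f(u,v)$ is the multiplicity of $(u,v)$ in the multiset $\bigcup_{(a,b)\in\Lambda(s,t)}\Delta(a,b)$, and that evaluating this count forces a piecewise case analysis along the edges and interior of $\Gamma(s,t)$---is exactly what the proof of Lemma~\ref{lem_support_non_zero_coeffs} sets up, and it matches the authors' intent. There is nothing to add.
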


\section{Betti numbers of powers of $(n-2)$-path ideals of $n$-cycles}\label{sec_experiment} 
In this section, we give the formulae for the Betti numbers of powers of $n-2$-path ideals of $n$-cycles. First, we recall several notations from the previous section. Let $S_n = \k[x_1,\ldots,x_n]$ be the standard graded polynomial ring over a field $\k$ where $n \ge 2$ is an integer. By convention, we set $J_2 = I_2 = S_2$. Now, assume that $n \ge 3$. Let $f_1=x_1x_2\cdots x_{n-2},\ldots, f_n=x_nx_1\cdots x_{n-3}$.
Let $I_n=(f_1,f_2,\ldots,f_n)$ and $J_n=(f_1,f_3,\ldots,f_n)$. For $s,t\ge 0$, we set 
$$B_{n,s,t}=J_n^sI_n^t \text{ and } C_{n,s,t} = J_n^s (x_1,x_n)^t.$$
Furthermore, if $s,t,i \ge 0$, we let $b(n,s,t,i) = \beta_i(B_{n,s,t})$ and $c(n,s,t,i) = \beta_i(C_{n,s,t})$. If one of $s,t,i$ is negative, by convention, we set $b(n,s,t, i) = c(n,s,t, i) = 0$.

We now introduce the following functions in order to give the formula for $b(n,s,t, i)$. 
\begin{align*}
    p(n,s,t,i)^+ &= \sum_{j=0}^{\left\lfloor \frac{i}{2} \right\rfloor} \binom{n}{i-2j} \left [ \binom{n+s+t-1-i+j}{n-1}-
 \binom{n+s-1-i+j}{n-1}\right ],\\
 p(n,s,t,i)^-&= \begin{cases}
     \sum_{j=0}^{\left\lfloor \frac{i-1}{2} \right\rfloor} \binom{n}{i-1-2j}\left(\binom{s+t+k-1-j}{n-1}-\binom{s+k-1-j}{n-1}\right) & \text{ if } n \text{ is odd}, \\
     \sum_{j=0}^{\left\lfloor \frac{i}{2} \right\rfloor} \binom{n}{i-2j}\left(\binom{s+t+k-1-j}{n-1}-\binom{s+k-1-j}{n-1}\right) & \text{ if } n \text{ is even},
 \end{cases}\\
 p(n,s,t,i)^c &= \binom{n-2}{i}\binom{n+s-i-2}{n-2},\\
 p(n,s,t,i) &= p(n,s,t,i)^+ - p(n,s,t,i)^- + p(n,s,t,i)^c.
\end{align*}
The $+$, $-$, and $c$ stand for the positive part, the negative part, and the constant part of $p$. The entire section is devoted to proving the following main technical result.
\begin{lem}\label{lem_main}
    For all integers $n,s,t,i$ with $n \ge 2$, $s,t\ge 0$, we have 
    $$b(n,s,t,i) = p(n,s,t,i).$$
\end{lem}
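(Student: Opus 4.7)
The plan is to prove Lemma \ref{lem_main} by induction on $n$, treating the equality $b(n,s,t,i) = p(n,s,t,i)$ as a combinatorial consequence of the Betti-splitting recurrences already established. The base cases $n = 2$ and $n = 3$ are handled directly: for $n = 2$, $B_{2,s,t} = S_2$, so $b(2,s,t,i) = \delta_{i,0}$; a short check (using $k=1$) shows that $p^+$ and $p^-$ cancel termwise and $p^c = \delta_{i,0}$. At the other boundary $t = 0$, both brackets defining $p^+$ and $p^-$ become differences of equal binomials and thus vanish, so $p(n,s,0,i) = p^c(n,s,0,i) = \binom{n-2}{i}\binom{n+s-i-2}{n-2}$, which matches $b(n,s,0,i) = \beta_i(J_n^s)$ by Corollary \ref{corjn-1}. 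This pins down the induction's anchor on both axes.

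For the induction step from level $n-2$ to level $n$, I would chain Lemma \ref{lem_b_c} and Lemma \ref{lem_c_b} into the single identity
\begin{align*}
b(n,s,t,i) = \; & b(n-1,s+t,0,i) + \sum_{(a,b)\in\Lambda(s,t)} \tilde b(n-2,a,0,i) \\
  & + \sum_{(a,b)\in\Lambda(s,t)} \; \sum_{(u,v)\in\Delta(a,b)} \dbtilde{b}(n-2,u,v,i).
\end{align*}
The boundary term $b(n-1,s+t,0,i)$ is given in closed form by Corollary \ref{corjn-1}, while every other $b$ on the right-hand side is indexed by $n-2$, so the induction hypothesis permits replacing it by $p(n-2,\cdot,\cdot,\cdot)$. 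Crucially, passing from $n-2$ to $n$ preserves the parity of $n$, so the index shift between the two cases of $p^-$ remains consistent across the induction step. The problem then reduces to verifying a purely combinatorial identity that the resulting triple sum equals $p^+(n,s,t,i) - p^-(n,s,t,i) + p^c(n,s,t,i)$.

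To carry out the simplification, I would exploit the telescoping structure of $p^+$ and $p^-$ via the hockey-stick identity $\binom{m+t}{n-1} - \binom{m}{n-1} = \sum_{k=0}^{t-1}\binom{m+k}{n-2}$, which rewrites each $t$-dependent bracket as a sum of single-layer contributions. These layers should then be matched to the sums over $(a,b)\in\Lambda(s,t)$ and $(u,v)\in\Delta(a,b)$ by splitting both index sets into their natural phases (the horizontal segment versus the diagonal in $\Lambda$; the $a \le b$ versus $a > b$ regime in $\Delta$), then reorganizing by $(u,v)$. A useful organizational device is to prove an auxiliary differenced form of the lemma — e.g., comparing $b(n,s,t,i)$ with $b(n,s,t-1,i)$, where the $\Lambda$ difference collapses to a small number of new points — so that each induction step amounts to a one-variable hockey-stick identity.

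The main obstacle is exactly this combinatorial bookkeeping. The interaction of $\Lambda(s,t)$ with the four regimes of $\Delta(a,b)$ breaks into several subcases, each producing a distinct binomial-sum pattern, and the cancellations yielding the clean form of $p$ are not transparent from the raw expressions. I also expect the parity-dependent definition of $p^-$ to require careful tracking — making sure the shifted-index formulation remains internally consistent when the induction hypothesis is substituted — and this is where I anticipate most of the technical effort lying, rather than in any conceptual novelty beyond the recurrences already proved in Section \ref{sec_n_2}.
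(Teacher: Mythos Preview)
Your overall architecture matches the paper's: the same base cases ($n=2$, $n=3$, $t=0$), the same chaining of Lemmas \ref{lem_b_c} and \ref{lem_c_b} to express $b(n,s,t,i)$ in terms of level $n-2$ data, and the same reduction to a binomial identity. The paper's Lemmas \ref{lem_init_2_s_t}, \ref{lem_init_n_s_0}, \ref{lem_init_3_s_t_i} are precisely your boundary checks, and your observation about parity preservation under $n\mapsto n-2$ is exactly why the two cases in $p^-$ behave coherently.

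Where you diverge is in the differenced form, and this is where the real work lies. You propose comparing $b(n,s,t,i)$ with $b(n,s,t-1,i)$; the paper instead compares $b(n,s,t,i)$ with $b(n,s+1,t-1,i)$, keeping $s+t$ constant. This choice is decisive: the boundary term $b(n-1,s+t,0,i)$ cancels outright, and the diagonal portions of $\Lambda(s,t)$ and $\Lambda(s+1,t-1)$ coincide, so the difference collapses to $\sum_{j=0}^{s}(\tilde c(n-1,j,t,i)-\tilde c(n-1,j,t-1,i))$, which via Lemma \ref{lem_c_b} becomes the explicit short sums of Lemma \ref{lem_self_rec_2}. Your proposed $t\mapsto t-1$ comparison would leave both the boundary term and the entire horizontal segment of $\Lambda$ uncancelled, and the bookkeeping you anticipate would remain essentially as bad as the raw double sum. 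The paper also needs a separate recurrence for $t=1$ (Lemma \ref{lem_self_rec_1}, induction on $s$), since the $s+t$ constant trick bottoms out there; your outline does not isolate this case. Once these two self-recurrences are in hand, the matching verifications for $p$ (Lemmas \ref{lem_p_rec_1}, \ref{lem_p_rec_2}) are still nontrivial binomial calculations, but they are one-dimensional rather than the two-parameter reorganization over $\Gamma(s,t)$ that your direct approach would require.
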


We now outline the strategy of proving the lemma. The tuples $(n,s,t, i)$ are totally ordered by the lexicographical order with $n > t > s > i$. The tuples $(2,s,t,i)$, $(3,s,t,i)$, and $(n,s,0,i)$ are called the boundary tuples. To establish Lemma \ref{lem_main}, we will prove the following

\begin{enumerate}
    \item $b(n,s,t,i) = p(n,s,t,i)$ for all boundary tuples $(n,s,t,i)$.
    \item $b(n,s,t, i)$ and $p(n,s,t, i)$ satisfy the same recurrent relation expressing their value at a non-boundary tuple as a finite linear combination of their values at smaller tuples and boundary tuples.
\end{enumerate}
For convenience in writing the recurrent relation, we inherit the following convention from the previous section. For any function $f$ in the tuples $(n,s,t,i)$ we set 
\begin{align*}
    \tilde f(n,s,t,i) &= f(n,s,t,i) + f(n,s,t,i-1),\\
    \dbtilde{f} (n,s,t,i) &= f(n,s,t,i) + 2 f(n,s,t,i-1) + f(n,s,t,i-2).
\end{align*}

First, we prove that the functions $b$ and $p$ agree on the boundary tuples.

\begin{lem}\label{lem_init_2_s_t}
    For all integers $s,t,i$ with $t \ge 0$ we have that $b(2,s,t,i) = p(2,s,t,i)$.
\end{lem}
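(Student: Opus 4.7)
The plan is a direct computation of both sides. First I would observe that, by the convention $J_2 = I_2 = S_2$ set at the start of the section, the ideal $B_{2,s,t} = J_2^s I_2^t$ equals $S_2$ itself for every $s, t \ge 0$; its minimal free resolution is therefore trivial, giving $b(2,s,t,0) = 1$ and $b(2,s,t,i) = 0$ for $i \ne 0$ (and both sides vanish for $i < 0$).

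Next I would substitute $n = 2$ and $k = 1$ into the three pieces of $p$. The constant part reduces immediately to $p^c(2,s,t,i) = \binom{0}{i}\binom{s}{0} = \delta_{i,0}$, which already matches $b(2,s,t,i)$; it therefore remains to show that $p^+(2,s,t,i) - p^-(2,s,t,i) = 0$.

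For this, I would note that $\binom{2}{i-2j}$ is nonzero only when $i - 2j \in \{0,1,2\}$, so at most two summands survive in each of $p^+$ and $p^-$. Writing these few surviving terms explicitly using the even-$n$ formula for $p^-$, the first-order differences $\binom{A+t}{1} - \binom{A}{1}$ that appear on the two sides have the same shifted arguments $A$; however, the pairings of weights $\binom{2}{m}$ with the differences are swapped between $p^+$ and $p^-$, so the two sums agree after relabeling. A short case split on the parity of $i$ (with $i = 0$ and $i = 1$ handled separately since only a single term then survives) finishes this cancellation. Combining with the $p^c$ contribution gives $p(2,s,t,i) = \delta_{i,0} = b(2,s,t,i)$.

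The hardest part will be the bookkeeping in the swap argument and in interpreting $\binom{a}{1}$ for small or negative upper index under the paper's combinatorial convention; once those conventions are pinned down, the computation is routine.
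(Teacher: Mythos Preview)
Your approach is correct and matches the paper's: both compute $b(2,s,t,i)$ directly from $J_2=I_2=S_2$ and then evaluate $p(2,s,t,i)$ at $n=2$. The paper simply writes $p(2,s,t,i)=\binom{0}{i}\binom{s-i}{0}$, leaving the cancellation $p^+=p^-$ implicit, whereas you spell it out.

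One remark: your swap argument is sound and, as you note, has the advantage of being insensitive to the convention on $\binom{a}{1}$ for nonpositive $a$. But there is a quicker route once you fix the polynomial convention $\binom{a}{1}=a$: for $n=2$ each bracketed term in $p^+$ is $(s+t+1-i+j)-(s+1-i+j)=t$ and each bracketed term in $p^-$ is $(s+t-j)-(s-j)=t$, so both sums equal $t\sum_j\binom{2}{i-2j}$ and cancel immediately. Either way the computation is routine, as you say.
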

\begin{proof}

Since, $J_2 = I_2 = S_2$, we have 
$$b(2,s,t,i) = \begin{cases} 1 & \text{ if } i =0 \text{ and } s, t \ge 0,\\
    0 & \text{ otherwise}.\end{cases}
$$
Now, by the definition of $p(n,s,t,i)$, we have
   $$ p(2,s,t,i) =  \binom{0}{i} \binom{s-i}{0}  = \begin{cases}
       1 & \text{ if } i = 0 \text{ and } s \ge 0\\
       0 & \text{ otherwise}.
   \end{cases}
       $$
The conclusion follows.       
\end{proof}

\begin{lem}\label{lem_init_n_s_0}
    For all $n \ge 2$ and all $s,i$ we have $b(n,s,0,i) = p(n,s,0,i)$.
\end{lem}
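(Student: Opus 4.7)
The plan is to reduce this boundary case directly to Corollary \ref{corjn-1}, then simplify $p(n,s,0,i)$ by observing massive cancellation when $t=0$. By definition, $b(n,s,0,i) = \beta_i(J_n^s I_n^0) = \beta_i(J_n^s)$. For $s \ge 1$, Corollary \ref{corjn-1} gives
$$b(n,s,0,i) = \binom{n-2}{i}\binom{n+s-i-2}{s-i},$$
and using the symmetry $\binom{n+s-i-2}{s-i} = \binom{n+s-i-2}{n-2}$ this rewrites as $\binom{n-2}{i}\binom{n+s-i-2}{n-2}$.

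Next I would evaluate $p(n,s,0,i)$ directly from the definition. Setting $t=0$ in $p(n,s,t,i)^+$, each bracketed expression
$$\binom{n+s+t-1-i+j}{n-1} - \binom{n+s-1-i+j}{n-1}$$
collapses to $0$, so $p(n,s,0,i)^+ = 0$. Exactly the same cancellation occurs in $p(n,s,0,i)^-$ in both the odd and even cases, so $p(n,s,0,i)^- = 0$ as well. Therefore $p(n,s,0,i) = p(n,s,0,i)^c = \binom{n-2}{i}\binom{n+s-i-2}{n-2}$, which matches the expression for $b(n,s,0,i)$ obtained in the previous paragraph.

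It remains to handle the edge case $s = 0$. Here $J_n^0 = S$ so $b(n,0,0,i) = 1$ if $i = 0$ and $0$ otherwise. On the other side, $p(n,0,0,i) = \binom{n-2}{i}\binom{n-i-2}{n-2}$; when $i = 0$ this is $\binom{n-2}{n-2} = 1$, and for $i \ge 1$ the upper index $n-i-2$ is strictly less than the lower index $n-2$, so the binomial is $0$. The $n = 2$ subcase is already covered by Lemma \ref{lem_init_2_s_t}, so no separate argument is needed.

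There is no real obstacle here: the lemma is essentially a bookkeeping check that the closed-form formula $p$ specializes correctly at $t = 0$ to the known Betti numbers of $J_n^s$ from Corollary \ref{corjn-1}. The only mild subtlety is the symmetry $\binom{n+s-i-2}{s-i} = \binom{n+s-i-2}{n-2}$ and the conventional vanishing of $\binom{a}{b}$ when $0 \le a < b$.
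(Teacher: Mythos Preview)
Your proposal is correct and follows essentially the same route as the paper, which also reduces to Lemma \ref{lem_init_2_s_t} for $n=2$ and to Corollary \ref{corjn-1} together with the definition of $p$ for $n\ge 3$. You simply make explicit the cancellation $p^+=p^-=0$ at $t=0$ and the binomial symmetry that the paper leaves implicit.
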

\begin{proof} The case where $n = 2$ follows from Lemma \ref{lem_init_2_s_t}. The case where $n \ge 3$ follows from the definition of $p(n,s,t,i)$ and Corollary \ref{corjn-1}.   
\end{proof}

\begin{lem}\label{lem_init_3_s_t_i} For all integers $s,t,i$ with $s,t\ge 0$ we have that $b(3,s,t,i) = p(3,s,t,i)$.
\end{lem}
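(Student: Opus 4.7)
My plan is to reduce to a direct polynomial identity via Lemma \ref{lem_b_c}. The case $t = 0$ is immediate from Lemma \ref{lem_init_n_s_0}, so I may assume $t \ge 1$. Since $J_2 = I_2 = S_2$, we have $C_{2,a,b} = (x_1,x_2)^b$, whose minimal free resolution is classical: for $b \ge 1$, $\beta_0 = b+1$, $\beta_1 = b$, and $\beta_i = 0$ for $i \ge 2$. Thus $c(2,a,b,i)$ is independent of $a$, and the shifted values are $\tilde c(2,a,b,0) = b+1$, $\tilde c(2,a,b,1) = 2b+1$, $\tilde c(2,a,b,2) = b$, and $\tilde c(2,a,b,i) = 0$ for $i \ge 3$.

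Applying Lemma \ref{lem_b_c} with $n = 3$ gives
\[
b(3,s,t,i) = b(2,s+t,0,i) + \sum_{(a,b) \in \Lambda(s,t)} \tilde c(2,a,b,i).
\]
The set $\Lambda(s,t)$ contains the $s+1$ pairs $(0,t),\ldots,(s,t)$ together with one pair for each $b \in \{1,\ldots,t-1\}$, and every pair has $b \ge 1$. Since $\tilde c(2,a,b,i)$ depends only on $b$ and $i$, the sum collapses to a one-variable sum in $b$, evaluable by $\sum_{b=1}^{t-1} b = \binom{t}{2}$. This yields closed-form polynomial expressions for $b(3,s,t,i)$ with $i \in \{0,1,2\}$, and $b(3,s,t,i) = 0$ for $i \ge 3$ (consistent with $\pd B_{3,s,t} \le 3$). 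For instance, $b(3,s,t,0) = (s+1)(t+1) + \binom{t+1}{2}$ and $b(3,s,t,2) = (s+1)t + \binom{t}{2}$.

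It remains to verify that these match $p(3,s,t,i)$. Specialising to $n = 3$ and $k = 1$, each of $p^+$, $p^-$, $p^c$ becomes a short sum simplifiable using the elementary identity $\binom{s+t+c}{2} - \binom{s+c}{2} = \frac{t(2s+t+2c-1)}{2}$. For $i \in \{0, 1, 2\}$ the agreement reduces to a direct polynomial comparison. For $i \ge 3$ the surviving contributions to $p^+$ and $p^-$ cancel pairwise: using $\binom{3}{k} = \binom{3}{3-k}$, the term of $p^+$ indexed by $j$ matches the term of $p^-$ indexed by $j' = i-2-j$. The main obstacle is bookkeeping, tracking which $j$ yield nonzero binomial contributions and confirming the pairing, but no conceptual input beyond Lemma \ref{lem_b_c} is required.
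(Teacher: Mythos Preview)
Your proposal is correct and follows essentially the same route as the paper: both reduce the case $t=0$ to Lemma \ref{lem_init_n_s_0}, then for $t\ge 1$ use Lemma \ref{lem_b_c} with $n=3$ together with the explicit Koszul Betti numbers of $(x_1,x_2)^b$ to compute $b(3,s,t,i)$ for $i\le 2$ and show it vanishes for $i\ge 3$, and finally verify the match with $p(3,s,t,i)$ by direct evaluation for $i\le 2$ and by the symmetry $\binom{3}{k}=\binom{3}{3-k}$ (your pairing $j\leftrightarrow i-2-j$) for $i\ge 3$. The only cosmetic difference is that you package the cancellation for large $i$ as a single bijection, whereas the paper writes out the odd and even cases separately.
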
 

\begin{proof} By Lemma \ref{lem_init_n_s_0} and Corollary \ref{corjn-1}, we have 
\begin{equation}\label{eq_b_3_0}
    p(3,s,0,i) = b(3,s,0,i) = \binom{1}{i} (s - i + 1) = \begin{cases} s + 1 & \text{ if } i = 0 \\
s & \text{ if } i = 1 \\
0 & \text{ if } i \ge 2.\end{cases}
\end{equation}
Hence, we may assume that $t \ge 1$. First, we calculate $b(3,s,t,i)$. By Lemma \ref{lem_b_c}, we have 
$$b(3,s,t,i) = b(2,s+t,0,i) + \sum_{(a,b) \in \Lambda(s,t)} \tilde c(2,a,b,i),$$
where 
$$\Lambda(s,t) = \{ (j,t)\;:\;0\leq j\leq s\}\cup \{(s+j,t-j)\;:\;1\leq j\leq t-1\}.$$
Since $J_2 = S_2$, we have $c(2,a,b,i) = 0$ if $i \ge 2$ and 
$$c(2,a,b,i) = \begin{cases}
    b+1 & \text{ if } i =0 \\
    b & \text{ if } i =1.
\end{cases}$$
Hence, $\tilde c(2,a,b,i) = 0$ if $i \ge 3$ and 
\begin{equation}\label{eq_c_2}
    \tilde c(2,a,b,i) = \begin{cases}
    b + 1 & \text{ if } i =0 \\
    2b + 1 & \text{ if } i = 1\\
    b & \text{ if } i = 2.
\end{cases}
\end{equation}
Thus, we have $b(3,s,t,i) = 0$ if $i \ge 3$. It remains to consider the cases where $0 \le i \le 2$. From Eq. \eqref{eq_b_3_0} and Eq. \eqref{eq_c_2}, we deduce the following
\begin{enumerate}
\item[(i)] $b(3,s,t,0) = 1 + (s+1) (t+1) + \sum_{b = 1}^{t-1} (b+1) = (t+1)(s+1 + \frac{t}{2}).$
\item[(ii)] $b(3,s,t,1) = (s+1)(2t + 1)  + \sum_{b=1}^{t-1} (2b + 1) =  2st + 2t + s + t^2.$
\item[(iii)] $b(3,s,t,2) = (s+1) t + \sum_{b=1}^{t-1}b = st + \frac{t(t+1)}{2}.$
\end{enumerate}

We now calculate $p(3,s,t,i)$. First, assume that $i = 2 \ell + 1$ for some $\ell \ge 1$. Then we have 

\begin{align*}
    \binom{3}{1}  \left [ \binom{s+t -\ell +1}{2} - \binom{s-\ell +1}{2} \right ]& = \binom{3}{2}  \left [ \binom{s+t -\ell +1}{2} - \binom{s-\ell +1}{2} \right ]\\
    \binom{3}{3}  \left [ \binom{s+t -\ell}{2} - \binom{s-\ell}{2} \right ]& = \binom{3}{0}  \left [ \binom{s+t -\ell}{2} - \binom{s-\ell}{2} \right ]
\end{align*}
and all other terms are $0$. Hence, $p(3,s,t,i) = 0$. The case where $i = 2 \ell$ for some $\ell \ge 2$ can be done similarly. Hence, it remains to consider the cases where $0\le i \le 2$. We have 
\begin{align*}
    p(3,s,t,0) &= \binom{2+s+t}{2} - \binom{2+s}{2} = \frac{t(t+1)}{2} + (s +1)t + (s+1)\\
 p(3,s,t,1) & = 3 \left ( \frac{t(t+1)}{2} + s t \right )- \left ( \frac{t(t+1)}{2} + (s-1) t \right ) + s \\
    & = t(t+1) + 2st + s + t\\
p(3,s,t,2) & = 2 t(t+1) + ( 4 (s - 1) + 1) t  - 3 \left ( \frac{t(t+1)}{2} + (s-1) t \right )  \\
    & = \frac{t(t+1)}{2} + st.
\end{align*}
The conclusion follows.    
\end{proof}

Now, we give recurrent relation for $b(n,s,t,i)$ when $(n,s,t,i)$ is not a boundary tuple. First, we consider the next to the boundary case, namely, the case $t = 1$.
\begin{lem}\label{lem_self_rec_1} Assume that $n \ge 4$, $s \ge 0$ are natural numbers. Then 
\begin{align*}
    b(n,s+1,1,i) & = b(n,s,1,i) + b(n-1,s+2,0,i) - b(n-1,s+1,0,i) \\
    & + \tilde b(n-2,s+1,0,i) + \sum_{j=0}^s \dbtilde{b} (n-2,j,1,i) + \dbtilde{b}(n-2,0,0,i).
\end{align*}
\end{lem}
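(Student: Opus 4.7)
The plan is to derive the recurrence by applying Lemma \ref{lem_b_c} to both sides of the difference $b(n,s+1,1,i) - b(n,s,1,i)$, which reduces matters to an identity involving $\tilde c(n-1,s+1,1,i)$, and then expand that using Lemma \ref{lem_c_b}. The only genuine content is a careful but elementary multiset computation of $\Delta(s+1,1)$.

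First I would apply Lemma \ref{lem_b_c} with $t=1$ to both $b(n,s+1,1,i)$ and $b(n,s,1,i)$. Since the second piece of $\Lambda(s,t)$ requires $1 \le j \le t-1$, it is empty when $t = 1$, so $\Lambda(s,1) = \{(j,1) : 0 \le j \le s\}$ and similarly $\Lambda(s+1,1) = \Lambda(s,1) \cup \{(s+1,1)\}$. Subtracting the two applications of Lemma \ref{lem_b_c} yields
\begin{equation*}
b(n,s+1,1,i) - b(n,s,1,i) = b(n-1,s+2,0,i) - b(n-1,s+1,0,i) + \tilde c(n-1,s+1,1,i).
\end{equation*}
Thus, to prove the lemma it is sufficient to verify
\begin{equation*}
\tilde c(n-1,s+1,1,i) = \tilde b(n-2,s+1,0,i) + \sum_{j=0}^{s} \dbtilde b(n-2,j,1,i) + \dbtilde b(n-2,0,0,i).
\end{equation*}

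Next I would apply Lemma \ref{lem_c_b} to $c(n-1,s+1,1,i)$. This gives
\begin{equation*}
c(n-1,s+1,1,i) = b(n-2,s+1,0,i) + \sum_{(a,b) \in \Delta(s+1,1)} \tilde b(n-2,a,b,i),
\end{equation*}
and hence after forming $\tilde c$ by adding the shifted version,
\begin{equation*}
\tilde c(n-1,s+1,1,i) = \tilde b(n-2,s+1,0,i) + \sum_{(a,b) \in \Delta(s+1,1)} \dbtilde b(n-2,a,b,i).
\end{equation*}
So the lemma reduces to the multiset identity
\begin{equation*}
\Delta(s+1,1) = \{(0,0)\} \cup \{(j,1) : 0 \le j \le s\}.
\end{equation*}

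Finally I would verify this multiset identity from the explicit description in Lemma \ref{lem_c_b}. For $s = 0$ this is the case $s \le t$ of $\Delta(1,1)$, where the multiplicity $t-s+1 = 1$ and the trailing chain $(1,s-1),\ldots,(s-1,1)$ is empty, giving $\{(0,0),(0,1)\}$, which matches. For $s \ge 1$ we are in the case $t < s$ with $t = 1$, $s \mapsto s+1$, and the trailing chain $(s-t+1,t-1),\ldots,(s-1,1)$ becomes $(s+1,0),\ldots,(s,1)$, which is empty (the second coordinate runs from $0$ upwards to $1$, so the range is degenerate). The surviving points are exactly $(0,0),(0,1),(1,1),\ldots,(s,1)$, as required. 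Combining the displayed equations yields the claimed recurrence. The main bookkeeping care is handling the degenerate trailing chain in the definition of $\Delta$ at $t = 1$; no deep step is required.
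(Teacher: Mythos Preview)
Your proof is correct and follows exactly the same approach as the paper's own proof, which simply states that the difference $b(n,s+1,1,i) - b(n,s,1,i) - (b(n-1,s+2,0,i) - b(n-1,s+1,0,i))$ equals $\tilde c(n-1,s+1,1,i)$ by Lemma~\ref{lem_b_c} and then invokes Lemma~\ref{lem_c_b}. You have carefully filled in the multiset computation of $\Delta(s+1,1)$ that the paper leaves implicit; your handling of the degenerate trailing chain at $t=1$ is correct (the second coordinate would have to descend from $t-1=0$ to $1$, which is vacuous).
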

\begin{proof}By Lemma \ref{lem_b_c}, we deduce that 
$$b(n,s+1,1,i) - b(n,s,1,i) - (b(n-1,s+2,0,i) - b(n-1,s+1,0,i)) = \tilde c(n-1,s+1,1,i).$$
The conclusion then follows from Lemma \ref{lem_c_b}.    
\end{proof}
\begin{lem}\label{lem_self_rec_2} Assume that $n \ge 4$, $s \ge 0$, and $t \ge 2$ are natural number. Then $b(n,s,t,i) - b(n,s+1,t-1,i) $ is equal to 

\begin{alignat*}{2}
  &\sum_{j = 0}^s \dbtilde{b} (n-2,0,j,i) && \text{ if } s \le t  \\
  &\sum_{j=0}^{t-1} \dbtilde{b} (n-2,0,j,i) + (s-t+1)\dbtilde{b}(n-2,0,t,i) && \\
  &+ \sum_{\ell = 1}^{s-t} (s-t+1-\ell) \left (\dbtilde{b}(n-2,\ell,t,i) -\dbtilde{b}(n-2,\ell,t-1,i) \right )&&\text{ if } s \ge t.
\end{alignat*}

\end{lem}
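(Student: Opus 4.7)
The plan is to compute the difference $b(n,s,t,i) - b(n,s+1,t-1,i)$ directly via the Betti splittings encoded in Lemmas \ref{lem_b_c} and \ref{lem_c_b}. Since $(s+1)+(t-1)=s+t$, applying Lemma \ref{lem_b_c} to both terms makes the boundary contribution $b(n-1,s+t,0,i)$ cancel. A direct comparison of $\Lambda(s,t)$ and $\Lambda(s+1,t-1)$ shows that their symmetric difference consists precisely of the points $\{(0,t),(1,t),\ldots,(s,t)\}$ (present only in $\Lambda(s,t)$) and $\{(0,t-1),(1,t-1),\ldots,(s,t-1)\}$ (present only in $\Lambda(s+1,t-1)$), with the diagonal tails $(s+j,t-j)$ for $j \ge 1$ matching up. Hence
$$b(n,s,t,i) - b(n,s+1,t-1,i) = \sum_{j=0}^s \bigl(\tilde c(n-1,j,t,i) - \tilde c(n-1,j,t-1,i)\bigr).$$

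Next, I would evaluate each difference $\tilde c(n-1,j,t,i) - \tilde c(n-1,j,t-1,i)$ using Lemma \ref{lem_c_b}. The base term $b(n-2,j,0,i)$ cancels, and what remains is a signed sum over the multiset symmetric difference $\Delta(j,t)\,\triangle\,\Delta(j,t-1)$, with the extra $\tilde{\ \ }$ operation turning every $\tilde b$ into $\dbtilde{b}$. A three-regime case analysis on $j$ versus $t$ is needed: for $j \le t-1$, both $\Delta$'s are in the $s\le t$ branch and differ only by one extra copy of $(0,j)$; for $j=t$ the boundary between the branches still contributes just the single element $(0,t)$; and for $j \ge t+1$, both fall in the $t<s$ branch and the symmetric difference is $\{(0,t),(1,t),\ldots,(j-t,t)\}$ in $\Delta(j,t)$ against $\{(1,t-1),\ldots,(j-t,t-1)\}$ in $\Delta(j,t-1)$, yielding
$$\dbtilde{b}(n-2,0,t,i) + \sum_{a=1}^{j-t}\bigl(\dbtilde{b}(n-2,a,t,i) - \dbtilde{b}(n-2,a,t-1,i)\bigr).$$

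Finally, I would sum these $j$-contributions over $0 \le j \le s$ and split into the two cases. If $s \le t$, every $j$ falls into the first regime and the result is $\sum_{j=0}^s \dbtilde{b}(n-2,0,j,i)$, matching case one. If $s \ge t$, I separate the sum into $j \in \{0,\ldots,t-1\}$, the single term $j=t$, and $j \in \{t+1,\ldots,s\}$, then swap the order of summation in $\sum_{j=t+1}^s \sum_{a=1}^{j-t}$: for fixed $a \in \{1,\ldots,s-t\}$ the admissible $j$ values are $\{t+a,\ldots,s\}$, which number $s-t+1-a$, producing exactly the triangular coefficient in the second formula. The constant piece $\dbtilde{b}(n-2,0,t,i)$ appears once for each $j \in \{t,\ldots,s\}$, giving multiplicity $s-t+1$ as claimed.

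The main technical obstacle is the careful bookkeeping of the multiset symmetric differences $\Delta(j,t)\,\triangle\,\Delta(j,t-1)$ across the three regimes where the piecewise definition of $\Delta$ switches branches, especially pinning down the boundary case $j = t$; once this is handled correctly, the remaining steps are a routine exchange of summation and combinatorial counting.
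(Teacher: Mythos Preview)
Your proposal is correct and follows essentially the same approach as the paper's proof: apply Lemma~\ref{lem_b_c} to obtain $b(n,s,t,i)-b(n,s+1,t-1,i)=\sum_{j=0}^{s}\bigl(\tilde c(n-1,j,t,i)-\tilde c(n-1,j,t-1,i)\bigr)$, then evaluate each difference via Lemma~\ref{lem_c_b} by comparing $\Delta(j,t)$ with $\Delta(j,t-1)$, and finally sum and interchange the order of summation. The paper compresses the last two steps into ``the conclusion follows,'' so your write-up is in fact more detailed than the original; the only tiny slip is the phrase ``every $j$ falls into the first regime'' in the case $s\le t$, since $j=s=t$ actually lands in your second regime---but as you already observed, that regime contributes $\dbtilde{b}(n-2,0,t,i)=\dbtilde{b}(n-2,0,s,i)$, so the stated sum is unaffected.
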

\begin{proof} By Lemma \ref{lem_b_c} we deduce that 
\begin{equation}\label{eq_rec_s_t_0}
    b(n,s,t,i) - b(n,s+1,t-1,i) = \sum_{j = 0}^s (\tilde c(n-1,j,t,i) - \tilde c(n-1,j,t-1,i)).
\end{equation}
Since $t \ge 2$, by Lemma \ref{lem_c_b} we have  $c(n-1,j,t,i) - c(n-1,j,t-1,i)$ is equal to
\begin{align*}
    \tilde b(n-2,0,j,i) & \text{ if } j \le t-1.\\
    \tilde b(n-2,0,t,i) + \sum_{\ell = 1}^{j-t} (\tilde b(n-2,\ell,t,i) - \tilde b(n-2,\ell,t-1,i)) & \text{ if } j > t-1.
\end{align*}
The conclusion follows.    
\end{proof}

It remains to prove that $p(n,s,t,i)$ follows the same recurrent relation as $b(n,s,t,i)$. Since $p(n,s,t,i) = p(n,s,t,i)^+ - p(n,s,t,i)^- + p(n,s,t,i)^c$, we make the following convention. When we have $A = \sum_{\mathbf{t}} a_\mathbf{t} p_\mathbf{t}$ where the sum is over some finite set of tuples $\mathbf{t}$, then $A^+ = \sum_\mathbf{t} a_{\mathbf{t}} p_{\mathbf{t}}^+$ and so on.

Before proving the recurrent relation for $p(n,s,t,i)$ we collect some useful binomial identities.

\begin{lem}\label{lem_binomial_identity} Assume that $n,m$ and $s$ are natural numbers. Then we have 
\begin{enumerate}
    \item $\binom{n+1}{s+1} = \binom{n}{s} + \binom{n}{s+1}$.
    \item $\binom{n-2}{m-2} + 2\binom{n-2}{m-1} + \binom{n-2}{m} = \binom{n}{m}$.
    \item $\sum_{j=0}^s \binom{n+j}{m} = \binom{n+s+1}{m+1} - \binom{n}{m+1}.$
    \item $\sum_{j=0}^s j \binom{n+j}{m} = s \binom{n+s+1}{m +1} - \binom{n+s+1}{m+2} + \binom{n+1}{m+2}$.
\end{enumerate}
\end{lem}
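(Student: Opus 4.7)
The plan is to prove the four identities sequentially, with each building on Pascal's rule and its immediate consequences; nothing here is deep, and the only mild obstacle is the bookkeeping in part (4).

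Identity (1) is exactly Pascal's rule $\binom{n+1}{s+1} = \binom{n}{s} + \binom{n}{s+1}$, which I will cite as a standard fact (or prove instantly by the distinguished-element argument: split the $(s+1)$-subsets of an $(n+1)$-set according to whether a fixed element is included).

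For identity (2), I will apply Pascal's rule twice to $\binom{n}{m}$. First write $\binom{n}{m} = \binom{n-1}{m-1} + \binom{n-1}{m}$, then expand each summand again via Pascal to obtain $\binom{n}{m} = \bigl(\binom{n-2}{m-2} + \binom{n-2}{m-1}\bigr) + \bigl(\binom{n-2}{m-1} + \binom{n-2}{m}\bigr)$, which collects to the claimed $\binom{n-2}{m-2} + 2\binom{n-2}{m-1} + \binom{n-2}{m}$.

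For identity (3), the hockey-stick identity, I will induct on $s$. The base case $s=0$ is Pascal's rule rewritten as $\binom{n}{m} = \binom{n+1}{m+1} - \binom{n}{m+1}$. For the inductive step, I add $\binom{n+s}{m}$ to both sides of the formula for $s-1$ and then collapse $\binom{n+s}{m+1} + \binom{n+s}{m}$ into $\binom{n+s+1}{m+1}$ via Pascal.

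For identity (4), I plan to use a summation-by-parts trick: rewrite $j = \sum_{k=1}^{j} 1$ and swap the order of summation, giving $\sum_{j=0}^{s} j \binom{n+j}{m} = \sum_{k=1}^{s} \sum_{j=k}^{s} \binom{n+j}{m}$. Applying identity (3) to the inner sum yields $\sum_{k=1}^{s} \bigl(\binom{n+s+1}{m+1} - \binom{n+k}{m+1}\bigr)$, which equals $s \binom{n+s+1}{m+1} - \sum_{k=1}^{s} \binom{n+k}{m+1}$. A second application of identity (3) to the remaining sum produces $\binom{n+s+1}{m+2} - \binom{n+1}{m+2}$, and rearranging gives the stated right-hand side. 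The only care needed is tracking the shifted index bounds so that the boundary term $\binom{n+1}{m+2}$ appears with the correct sign; this is the step where I expect a small risk of an off-by-one slip and would double-check against small cases like $s=1$ or $s=2$.
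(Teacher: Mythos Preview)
Your proposal is correct; each of the four arguments is valid, and the bookkeeping in part (4) checks out (the inner sum $\sum_{j=k}^s \binom{n+j}{m}$ indeed telescopes via (3) to $\binom{n+s+1}{m+1} - \binom{n+k}{m+1}$, and the reindexed sum $\sum_{k=1}^s \binom{n+k}{m+1} = \sum_{k'=0}^{s-1} \binom{(n+1)+k'}{m+1}$ then gives exactly $\binom{n+s+1}{m+2} - \binom{n+1}{m+2}$). The paper itself does not give a proof at all: it simply states that these are standard binomial identities and refers to Gould's compendium, so your write-up is strictly more detailed than what appears there.
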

\begin{proof}
    These are standard binomial identities, see e.g. \cite{G}.
\end{proof}

\begin{lem}\label{lem_p_rec_1} Assume that $n \ge 4$, $s \ge 0$ are natural numbers. Then 
\begin{align*}
    p(n,s+1,1,i) & = p(n,s,1,i) + p(n-1,s+2,0,i) - p(n-1,s+1,0,i) \\
    & + \tilde p(n-2,s+1,0,i) + \sum_{\ell=0}^s \dbtilde{p} (n-2,\ell,1,i) + \dbtilde{p}(n-2,0,0,i).
\end{align*}   
\end{lem}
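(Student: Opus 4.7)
The plan is to decompose $p = (p^+ + p^c) - p^-$ and verify the identity separately for the two pieces $p^-$ and $p^+ + p^c$. Numerical checks on small cases show that this is the right regrouping: the individual components $p^+$, $p^-$, $p^c$ do not each satisfy the recurrence, but $p^-$ and $p^+ + p^c$ do. This grouping mirrors the structure of Theorem~\ref{teo3}, where $p^+ + p^c$ captures the path-like contribution plus the $t=0$ base case and $p^-$ is the cyclic correction.

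For the $p^-$ piece, the bracketed difference $\binom{s+t+k-1-j}{n-1}-\binom{s+k-1-j}{n-1}$ vanishes at $t=0$, so all four $p^-$ boundary contributions on the right-hand side drop out, reducing the identity to
\[ p^-(n,s+1,1,i) - p^-(n,s,1,i) = \sum_{\ell=0}^s \dbtilde p^-(n-2,\ell,1,i). \]
The critical observation is that, after one application of Pascal's rule, the second binomial factor $\binom{\ell+k-2-j}{n-4}$ in $p^-(n-2,\ell,1,i)$ is independent of $i$. Hence the three $i$-shifted summands inside $\dbtilde p^-(n-2,\ell,1,i)$ share a common second binomial, and the three first binomials collapse via the double Pascal identity $\binom{n-2}{m-2}+2\binom{n-2}{m-1}+\binom{n-2}{m}=\binom{n}{m}$ of Lemma~\ref{lem_binomial_identity}(2) into $\binom{n}{i'-2j}$, with $i'=i$ for $n$ even and $i'=i-1$ for $n$ odd. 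The $\sum_\ell$ then evaluates by identity~(3) of the same lemma, and the two sides agree modulo a residual term $\sum_j \binom{n}{i'-2j}\binom{k-2-j}{n-3}$ that vanishes identically for $n\ge 4$ since $k-2-j<n-3$ whenever $j\ge 0$.

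For the $p^+ + p^c$ piece, the right-hand-side boundary involves only $p^c$ (as $p^+$ vanishes at $t=0$). The second binomial in $p^+(n-2,\ell,1,i)$ does depend on $i$, so the trick above does not apply verbatim. At fixed $j$, however, the three second binomials $\binom{n-3+\ell-i+j}{n-4}$, $\binom{n-4+\ell-i+j}{n-4}$, $\binom{n-5+\ell-i+j}{n-4}$ differ by Pascal shifts; rewriting the outer two in terms of the middle one (introducing correction terms at degree $n-5$) lets the three first binomials $\binom{n-2}{i-2j-r}$ for $r=0,1,2$ collapse via double Pascal into $\binom{n}{i-2j}$. The resulting main term $\sum_j \binom{n}{i-2j}\sum_{\ell=0}^s \binom{n-4+\ell-i+j}{n-4}$ simplifies by identity~(3) to a near-match with the $p^+$ part of the left-hand increment. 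The lower-degree correction terms, together with the $p^c$ parts on both sides (the boundary $p^c(n-1,s+2,0,i)-p^c(n-1,s+1,0,i)+\tilde p^c(n-2,s+1,0,i)$ and the $\dbtilde p^c$ summands), assemble into an auxiliary $\binom{n-2}{i}$-identity in $(n,s,i)$ that I would verify directly by Pascal.

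The main obstacle will be the bookkeeping of the lower-degree correction terms from the Pascal rewriting in the $p^+$ analysis and tracking which of them are absorbed by the $p^c$ contributions. An explicit check at small $i$ (say $i\le 2$) should handle any summation-range degeneracies at the edges.
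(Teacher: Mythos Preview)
Your plan is correct and mirrors the paper's proof: both separate off $p^-$ (where the second binomial is $i$-independent after one Pascal, so the double-Pascal identity of Lemma~\ref{lem_binomial_identity}(2) collapses $\dbtilde{p}^-$ directly), then handle $p^+$ together with the $p^c$ boundary terms. The paper keeps $p^+$, $p^-$, $p^c$ formally separate throughout, finding that $\dbtilde{B}^+$ matches $A^+$ up to a single correction $\binom{n-2}{i}\binom{n+s-i-2}{n-4}$ which is then absorbed by the $p^c$ contributions $C(i)+D(i)$; your regrouping $(p^++p^c)-p^-$ is the same computation with different bookkeeping.

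One small slip to fix when you carry it out: in the $p^+$ piece, replacing $i$ by $i-1$ and $i-2$ \emph{raises} the top of the second binomial, so the three second binomials in $\dbtilde p^+(n-2,\ell,1,i)$ at fixed $j$ are $\binom{n-3+\ell-i+j}{n-4}$, $\binom{n-2+\ell-i+j}{n-4}$, $\binom{n-1+\ell-i+j}{n-4}$, not the decreasing sequence you wrote. This does not affect the strategy---you still rewrite two of them in terms of the third via Pascal and collect the $(n-5)$-degree corrections---but the signs of those corrections will flip relative to what you sketched. The paper sidesteps this by splitting into the cases $i=2h$ and $i=2h+1$ and tracking the summation-range endpoints explicitly, which is what produces the single clean correction term above; either route works.
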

\begin{proof}Let $A(i) = p(n,s+1,1,i) - p(n,s,1,i)$. By Lemma \ref{lem_binomial_identity} and the definition of $p(n,s,t,i)$ we have that 
\begin{align*}
    A(i)^+& =\sum_{j=0}^{\left\lfloor \frac{i}{2} \right\rfloor} \binom{n}{i-2j} \binom{n+s-1-i+j}{n-3}  \\
A(i)^-& = \sum_{j=0}^{\left\lfloor \frac{i-1}{2} \right\rfloor} \binom{n}{i-1-2j} \binom{s+k-1-j}{n-3} \\
A(i)^c &= \binom{n-2}{i} \binom{n+s-i-2}{n-3} 
\end{align*} 
Let $B(i) = \sum_{\ell = 0}^s p(n-2,\ell,1,i) + p(n-2,0,0,i)$. Then we have that 
\begin{align*}
B(i)^+ &=\sum_{j=0}^{\left\lfloor \frac{i}{2} \right\rfloor} \binom{n-2}{i-2j} \left ( \binom{n+s-i+j-2}{n-3} - \binom{n-i+j-3}{n-3} \right ) + p(n-2,0,0,i) \\
&= \sum_{j=0}^{\left\lfloor \frac{i}{2} \right\rfloor} \binom{n-2}{i-2j}  \binom{n+s-i+j-2}{n-3}\\
B(i)^-& = \sum_{j=0}^{\left\lfloor \frac{i-1}{2} \right\rfloor} \binom{n-2}{i-1-2j} \binom{s+k-1-j}{n-3}  \\
B(i)^c &= \binom{n-4}{i}  \binom{n+s-i-3}{n-3}.     
\end{align*} 
By Lemma \ref{lem_binomial_identity}, we deduce that 
\begin{align*}
   \dbtilde{B}(i)^- & = A(i)^-. 
\end{align*}

Assume that $i = 2h$. Then we have
\begin{align*}
\dbtilde{B}(i)^+ &= \sum_{j=1}^{h-1} \binom{n+s-i+j-1}{n-3} \left [ \binom{n-2}{i-2j -2 } +  2 \binom{n-2}{i-1-2j}  +   \binom{n-2}{i-2j} \right ]  \\
& +  \binom{n+s-i-1}{n-3} \left ( \binom{n-1}{i-1} + \binom{n-2}{i-1} \right) \\
&+ \binom{n-2}{i} \binom{n+s-i-2}{n-3}  + \binom{n+s-h-1}{n-3}\\
&=A(i)^+ - \binom{n-2}{i} \binom{n+s-i-2}{n-4}.
\end{align*} 
Let 
\begin{align*}
    C(i)&:=p(n-1,s+2,0,i) - p(n-1,s+1,0,i) = \binom{n-3}{i} \binom{n+s-i-2}{n-4}.\\
    D(i)&:= \tilde p(n-2,s+1,0,i)=   \binom{n-4}{i}\binom{n+s-3-i}{n-4}  +  \binom{n-4}{i-1}\binom{n+s-i-2}{n-4}.
\end{align*}
To prove that $A(i) = \dbtilde{B}(i) + C(i) + D(i)$, it remains to prove that the constant terms match up. Namely,
\begin{align*}
   \binom{n-2}{i} \binom{n+s-i-1}{n-3} &= \binom{n-4}{i} \binom{n+s-i-2}{n-3} + \binom{n-4}{i-1} \binom{n+s-i-1}{n-3}  \\
   &+  \binom{n-4}{i-1}\binom{n+s-i-2}{n-3} + \binom{n-4}{i-2} \binom{n+s-i-1}{n-3}\\
   &+ \binom{n-3}{i} \binom{n+s-i-2}{n-4}\\
   & = \binom{n-3}{i} \binom{n+s-i-2}{n-3} + \binom{n-3}{i-1} \binom{n+s-i-1}{n-3}  \\
   &+ \binom{n-3}{i} \binom{n+s-i-2}{n-4}\\
   & = \binom{n-3}{i} \binom{n+s-i-1}{n-3} + \binom{n-3}{i-1} \binom{n+s-i-1}{n-3} \\
   &=\binom{n-2}{i} \binom{n+s-i-1}{n-3}.
\end{align*}
The case $i = 2h+1$ can be done in similar manner. The conclusion follows.
\end{proof}

\begin{lem}\label{lem_p_rec_2} Assume that $n \ge 4$, $s \ge 0$, and $t \ge 2$ are natural number. Then $p(n,s,t,i) - p(n,s+1,t-1,i) $ is equal to 

\begin{alignat*}{2}
  &\sum_{\ell = 0}^s \dbtilde{p} (n-2,0,\ell,i) && \text{ if } s \le t  \\
  &\sum_{\ell=0}^{t-1} \dbtilde{p} (n-2,0,\ell,i) + (s-t+1)\dbtilde{p}(n-2,0,t,i) && \\
  &+ \sum_{\ell = 1}^{s-t} (s-t+1-\ell) \left (\dbtilde{p}(n-2,\ell,t,i) -\dbtilde{p}(n-2,\ell,t-1,i) \right )&&\text{ if } s \ge t.
\end{alignat*}
\end{lem}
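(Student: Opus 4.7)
The plan is to mirror exactly the argument of Lemma \ref{lem_p_rec_1}: decompose both sides of the identity into their $+$, $-$, and $c$ parts and verify each piece separately, using the binomial identities collected in Lemma \ref{lem_binomial_identity}. Because $p(n,s,t,i)$ is built from polynomial-type expressions in $s$ and $t$ against fixed binomials in $n$ and $i$, the differences $p(n,s,t,i) - p(n,s+1,t-1,i)$ will, after applying identity (1), produce terms of the shape $\binom{n+s+t-1-i+j}{n-2}$ (one degree lower), and it is precisely such terms that appear after applying $\dbtilde{}$ and identity (2) to the right-hand side.

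First I would set $A(i):=p(n,s,t,i)-p(n,s+1,t-1,i)$ and split $A=A^+-A^-+A^c$. Using identity (1) of Lemma \ref{lem_binomial_identity}, one checks that
$$A(i)^+ = \sum_{j=0}^{\lfloor i/2\rfloor}\binom{n}{i-2j}\binom{n+s+t-2-i+j}{n-2},$$
and similarly $A(i)^-$ is expressed in terms of $\binom{n-2}{n-2}$-style differences in $t$; the constant part $A(i)^c$ vanishes since $p(n,s,t,i)^c$ is independent of $t$ and shifts in $s$ cancel out modulo a single $\binom{n-2}{i}\binom{n+s-i-2}{n-3}$ term that must be tracked.

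Next, for the right-hand sides, I would apply identity (2) of Lemma \ref{lem_binomial_identity} to each $\dbtilde{p}(n-2,\cdot,\cdot,i)$: this converts the second-difference-in-$i$ on $\binom{n-2}{i-2j}$ into $\binom{n}{i-2j}$, matching the $n$-index appearing in $A(i)$. Then I would collapse the $\ell$-sums via identities (3) and (4): in the case $s\le t$ the sum $\sum_{\ell=0}^{s}\dbtilde{p}(n-2,0,\ell,i)$ telescopes through (3) to give exactly $A(i)^+-A(i)^-$ plus the appropriate constant correction. For the case $s\ge t$, the terms $\sum_{\ell=0}^{t-1}\dbtilde{p}(n-2,0,\ell,i)+(s-t+1)\dbtilde{p}(n-2,0,t,i)$ contribute the part of $A(i)$ corresponding to $s\le t$, while the additional weighted sum $\sum_{\ell=1}^{s-t}(s-t+1-\ell)\bigl(\dbtilde{p}(n-2,\ell,t,i)-\dbtilde{p}(n-2,\ell,t-1,i)\bigr)$ accounts, via identity (4), for the extra $s-t$ contribution that arises because $p^+$ involves $s+t$ while $p^-$ and $p^c$ do not. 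The weights $s-t+1-\ell$ are exactly what falls out when Abel summation is applied to $\sum_\ell\binom{n+s-i-1+j-\ell}{n-3}$ against the telescoping $\binom{\cdot}{n-3}\to\binom{\cdot}{n-2}$ transition.

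Finally, as in Lemma \ref{lem_p_rec_1}, the constant (non-alternating) leftovers must be reconciled. I expect the main obstacle to be bookkeeping in the case $s\ge t$: the double summation over $\ell$ and the parity split of $i$ (even $i=2h$ versus odd $i=2h+1$) produce several boundary terms in $j=0$ and $j=\lfloor i/2\rfloor$ that must cancel against the $p^c$-contributions of both $p(n-2,\ell,t,i)$ and $p(n-2,\ell,t-1,i)$; the $n$ even versus $n$ odd split in $p^-$ also forces a separate verification in each parity of $n$. Once each parity and each range of $j$ is checked via identity (2) and a final application of (1), the equality of $A(i)$ with the stated right-hand side follows.
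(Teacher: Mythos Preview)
Your overall strategy is the same as the paper's: decompose $A(i)=p(n,s,t,i)-p(n,s+1,t-1,i)$ into its $+$, $-$, $c$ parts, apply identity~(2) to pass from $\dbtilde{p}(n-2,\cdot)$ to $\binom{n}{\cdot}$-weighted sums, and collapse the $\ell$-sums via identities~(3) and~(4). However, your displayed formula for $A(i)^+$ is wrong, and this distorts the rest of your sketch. In $p(n,s,t,i)^+$ and $p(n,s+1,t-1,i)^+$ the first bracketed terms $\binom{n+s+t-1-i+j}{n-1}$ are identical and cancel; the difference comes from the \emph{second} terms, giving
\[
A(i)^+=\sum_{j=0}^{\lfloor i/2\rfloor}\binom{n}{i-2j}\binom{n+s-1-i+j}{n-2},
\]
with no $t$ at all (and likewise $A(i)^-$ depends only on $s$). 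The constant part is $A(i)^c=-\binom{n-2}{i}\binom{n+s-i-2}{n-3}$, not zero.

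Because $A(i)$ is in fact independent of $t$, the logic of the $s\ge t$ case is not what you describe. The paper first shows that the $s\le t$ right-hand side $\dbtilde{B}(s,i)$, where $B(s,i)=\sum_{\ell=0}^s p(n-2,0,\ell,i)$, equals $A(i)$ for \emph{every} $s$ (not just $s\le t$). Then, for $s>t$, it sets $E(i)=B(t,i)+(s-t)\,p(n-2,0,t,i)+\sum_{\ell=1}^{s-t}(s-t+1-\ell)\bigl(p(n-2,\ell,t,i)-p(n-2,\ell,t-1,i)\bigr)$ and checks directly, via identities~(3) and~(4), that $E(i)^{\pm}=B(s,i)^{\pm}$; so the more complicated $s\ge t$ expression collapses to the one already handled. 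Your claim that the weighted sum ``accounts for the extra $s-t$ contribution arising because $p^+$ involves $s+t$'' is therefore backwards: there is no $t$-dependence in $A(i)$ to account for, and the extra terms serve to reduce the $s\ge t$ formula to the $s\le t$ one.
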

\begin{proof} Again, we prove the case $n$ is odd, the case $n$ is even can be done in similar manner. Let $A(i) = p(n,s,t,i) - p(n,s+1,t-1,i)$. Then we have
    \begin{align*}
        A(i)^+ &= \sum_{j=0}^{\left\lfloor \frac{i}{2} \right\rfloor} \binom{n}{i-2j} \binom{n+s-1-i+j}{n-2}\\
        A(i)^- &=\sum_{j=0}^{\left\lfloor \frac{i-1}{2} \right\rfloor} \binom{n}{i-1-2j} \binom{s+k-1-j}{n-2}\\
        A(i)^c &= -\binom{n-2}{i}\binom{n+s-i-2}{n-3}.
    \end{align*}

Let $B(s,i) = \sum_{\ell=0}^s p(n-2,0,\ell,i)$. Then we have 
\begin{align*}
    B(s,i)^+ & = \sum_{j=0}^{\left\lfloor \frac{i}{2} \right\rfloor} \binom{n-2}{i-2j}  \sum_{\ell =0}^s \binom{n + \ell -3 -i +j}{n-3} = \sum_{j=0}^{\left\lfloor \frac{i}{2} \right\rfloor} \binom{n-2}{i-2j}\binom{n + s -2 -i +j}{n-2} \\
    B(s,i)^-& = \sum_{j=0}^{\left\lfloor \frac{i-1}{2} \right\rfloor} \binom{n-2}{i-1-2j} \sum_{\ell =0}^s\binom{\ell+k-2-j}{n-3} = \sum_{j=0}^{\left\lfloor \frac{i-1}{2} \right\rfloor} \binom{n-2}{i-1-2j} \binom{s+k-1-j}{n-2}\\
    B(s,i)^c & = 0
\end{align*}
The terms $B(i)^+, B(s,i)-$ have the same form as the term $B(i)^+$ and $B(i)^-$ in Lemma \ref{lem_p_rec_1}. Hence, we deduce that 
\begin{align*}
\dbtilde{B}   (i)^+ & = \sum_{j=0}^{\left\lfloor \frac{i}{2} \right\rfloor} \binom{n}{i-2j}   \binom{n + s -1 -i +j}{n-2} - \binom{n-2}{i}\binom{n + s -2 -i}{n-3} = A(i)^+ + A(i)^c\\
    B(i)^-& = \sum_{j=0}^{\left\lfloor \frac{i-1}{2} \right\rfloor} \binom{n}{i-1-2j} \binom{s+k-1-j}{n-2} = A(i)^-.
\end{align*}
Hence, $A(i) = B(i)$ for all $i$ and $s$. Thus, it remains to consider the case $s > t$. In this case, we let $C(\ell,i) = p(n-2,\ell,t,i) - p(\ell,t-1,i)$. Then we have 
\begin{align*}
     C(\ell,i)^+&=  \sum_{j=0}^{\left\lfloor \frac{i}{2} \right\rfloor} \binom{n-2}{i-2j}   \binom{\ell + t +n  - 4 -i +j}{n-4}\\
     C(\ell,i)^-&= \sum_{j=0}^{\left\lfloor \frac{i-1}{2} \right\rfloor} \binom{n-2}{i-1-2j} \binom{\ell + t + k-3-j}{n-4}\\
     C(\ell,i)^c &=0.
\end{align*}
Thus, 
\begin{align*}
    \sum_{\ell = 1}^{s-1} C(\ell,i)^+ &= \sum_{j=0}^{\left\lfloor \frac{i}{2} \right\rfloor} \binom{n-2}{i-2j}  \left [ \binom{s +n  - 3 -i +j}{n-3} - \binom{t+n-3-i+j}{n-3} \right ] \\
    \sum_{\ell = 1}^{s-1} \ell C(\ell,i)^+ &= \sum_{j=0}^{\left\lfloor \frac{i}{2} \right\rfloor} \binom{n-2}{i-2j}  \left [ (s-t) \binom{s +n  - 3 -i +j}{n-3} - \binom{s+n-3-i+j}{n-2} \right .& \\
     &+ \left . \binom{t+n-3-i+j}{n-2} \right ].
\end{align*}
Let $D(i) = \sum_{\ell = 1}^{s-t} (s-t + 1 -\ell) C(\ell,i)$. Then we have
    
    \begin{align*}
    D(i)^+  & = \sum_{j=0}^{\left\lfloor \frac{i}{2} \right\rfloor} \binom{n-2}{i-2j}  \left [   \binom{s + n -2 -i +j}{n-2}  - (s-t) \binom{t + n - 3 -i + j}{n-3} \right .\\
    & - \left .  \binom{t + n -2 -i + j}{n-2} \right ].
\end{align*}

Let $E(i) = B(t,i) + (s-t) p(n-2,0,t,i) + D(i)$. Then we have 
\begin{align*}
    E(i)^+ &= \sum_{j=0}^{\left\lfloor \frac{i}{2} \right\rfloor} \binom{n-2}{i-2j} \binom{n + s -2 -i +j}{n-2} = B(s,i)^+\\
    E(i)^- &= B(s,i)^-.
\end{align*}
The conclusion follows.
\end{proof}
\begin{proof}[Proof of Lemma \ref{lem_main}] We prove by induction on the tuples $(n,s,t,i)$ ordered by lexicographic order with $n > t > s> i$. By Lemma \ref{lem_init_2_s_t}, Lemma \ref{lem_init_3_s_t_i}, and Lemma \ref{lem_init_n_s_0}, we have that $b(n,s,t,i) = p(n,s,t,i)$ for all boundary tuples. Thus, we may assume that $n \ge 4$ and $t \ge 1$. First, assume that $t = 0$. If $s = 0$, the conclusion is clear. Now, when $s > 0$ the conclusion follows from Lemma \ref{lem_self_rec_1}, Lemma \ref{lem_p_rec_1}, and induction. When $t > 1$, the conclusion follows from Lemma \ref{lem_self_rec_2}, Lemma \ref{lem_p_rec_2} and induction.    
\end{proof}
\begin{proof}[Proof of Theorem \ref{teo3}] By Lemma \ref{lem_main}, we have $$\beta_i(J_{n,n-2}^t) = b(n,0,t,i) = p(n,0,t,i).$$
The conclusion follows.    
\end{proof}

\begin{obs}
The Betti numbers of powers of path ideals of paths come up quite naturally if one considers the Betti numbers of general $t$-path ideals of cycles. In subsequent work, we will carry the computation for the Betti numbers of powers of path ideals of paths.
\end{obs}

\subsection*{Aknowledgments}

The second author was supported by a grant of the Ministry of Research, Innovation and Digitization, CNCS - UEFISCDI, 
project number PN-III-P1-1.1-TE-2021-1633, within PNCDI III.

\subsection*{Data availability}

Data sharing not applicable to this article as no data sets were generated or analyzed during the current study.

\subsection*{Conflict of interest}

The authors have no relevant financial or non-financial interests to disclose.

\end{document}